\newcommand{\mbb}[1]{\mathbb{#1}}
\newcommand{\mbf}[1]{\mathbf{#1}}
\newcommand{\mc}[1]{\mathcal{#1}}
\newcommand{\tr}{\textup{tr}\,}
\newcommand{\wt}[1]{\widetilde{#1}}
\newcommand{\Tr}[1]{\left\langle{#1}\right\rangle}
\newcommand{\diff}{\,\mathrm{d}}
\renewcommand{\det}{\textup{det}}
\renewcommand{\Re}{\textup{Re}}
\renewcommand{\Im}{\textup{Im}}
\numberwithin{equation}{section}
\newtheorem{theorem}{Theorem}[section]
\newtheorem{lemma}{Lemma}[section]
\newtheorem{proposition}{Proposition}[section]
\theoremstyle{remark}
\theoremstyle{definition}
\newtheorem{definition}{Definition}[section]
\title{Bulk Universality for Sparse Complex non-Hermitian Random Matrices
\date{}
\author{Mohammed Osman\footnote{mohammed.osman@qmul.ac.uk}\\~\\\small Queen Mary University of London}}
\begin{document}

\maketitle

\abstract{We prove that the local eigenvalue statistics in the bulk for complex random matrices with independent entries whose $r$-th absolute moment decays as $N^{-1-(r-2)\epsilon}$ for some $\epsilon>0$ are universal. This includes sparse matrices whose entries are the product of a Bernouilli random variable with mean $N^{-1+\epsilon}$ and an independent complex-valued random variable. By a standard truncation argument, we can also conclude universality for complex random matrices with $4+\epsilon$ moments. The main ingredient is a sparse multi-resolvent local law for products involving any finite number of resolvents of the Hermitisation and deterministic $2N\times 2N$ matrices whose $N\times N$ blocks are multiples of the identity.}

\section{Introduction}
In this paper we are concerned with the local statistics of eigenvalues of non-Hermitian random matrices in the bulk of the spectrum. The model we will study is defined as follows.
\begin{definition}\label{def1}
Let $\epsilon>0$ and $q\geq N^{\epsilon}$. The set $\mc{S}_{N}(\epsilon)$ consists of random matrices $X\in\mbb{C}^{N\times N}$ with independent entries satisfying the following conditions:
\begin{enumerate}[label=(\alph*),ref=\thedefinition (\alph*)]
\item \[\mbb{E}X_{ij}=\mbb{E}X^{2}_{ij}=0;\]
\item \[\mbb{E}|X_{ij}|^{2}=\frac{1}{N};\]
\item\label{def1:3}\[\mbb{E}|X_{ij}|^{r}\leq\frac{C_{r}}{Nq^{r-2}},\qquad r>2.\]
\end{enumerate}
\end{definition}

As a first example, let $\{\xi_{ij}:i,j\in[N]\}$ be i.i.d. Bernouilli random variables with mean $p=N^{-1+\epsilon}$ and $\{x_{ij}:i,j\in[N]\}$ be independent, centred, complex random variables with finite moments such that $\mbb{E}x_{ij}^{2}=0$ and $\mbb{E}|x_{ij}|^{2}=1$. Then the matrix $X_{ij}=N^{-\epsilon/2}\xi_{ij}x_{ij}$ satisfies Definition \ref{def1}. Thus this model includes sparse complex matrices with on average $N^{1+\epsilon}$ nonzero entries. For this model it is known from the work of Tao--Vu \cite{tao_random_2008} and G\"{o}tze--Tikhomirov \cite{gotze_circular_2010} that the empirical distribution of eigenvalues converges almost surely to the circular law. In fact, Rudelson and Tikhomirov \cite{rudelson_sparse_2019} show that convergence in probability holds under the minimal assumption $Np\to\infty$. If instead $Np\to d<\infty$, Sah--Sahasrabudhe--Sawhney \cite{sah_limiting_2023} show convergence to a (non-explicit) deterministic measure $\mu_{d}$.

As a second example, let $X_{ij}=N^{-1/2}x_{ij}$, where $\{x_{ij}:i,j\in[N]\}$ are independent, centred, complex random variables with independent real and imaginary parts such that  $\mbb{E}|x_{ij}|^{2}=1$ and $\mbb{E}|x_{ij}|^{4+\delta}<C_{\delta}$ for some $\delta>0$. By \cite[Lemma 7.6]{erdos_spectral_2012}, for $\lambda=N^{1/2-\epsilon}$ we construct independent, centred, complex random variables $y_{ij}$ such that $|y_{ij}|\leq \lambda$ almost surely and
\begin{align*}
    \mbb{P}(x_{ij}\neq y_{ij})\leq C_{\delta}\lambda^{-4-\delta}.
\end{align*}
Then the matrix $Y_{ij}=N^{-1/2}y_{ij}$ satisfies Definition \ref{def1} and by \cite[Eq. (7.12)]{erdos_spectral_2012} the local statistics of $X$ and $Y$ are the same.

If we replace the third condition \ref{def1:3} with
\begin{align*}
    \mbb{E}|X_{ij}|^{r}&\leq C_{r}N^{-r/2},
\end{align*}
then bulk and edge universality have been proven in \cite{maltsev_bulk_2024} and \cite{cipolloni_edge_2020} respectively. With the slower decay in \ref{def1:3}, Yukun He \cite{he_edge_2023} proves edge universality for real matrices (He also allows for a rank-one mean matrix which is needed for the application to Erd\H{o}s--R\'{e}nyi digraphs). At the edge, universality can be deduced from a single resolvent local law following the method of Cipolloni--Erd\H{o}s--Schr\"{o}der \cite{cipolloni_edge_2020}, for which the difference between real and complex matrices is immaterial. By contrast, in the bulk a single resolvent local law is not sufficient and the method in \cite{maltsev_bulk_2024} required two-resolvent local laws. In the sparse case we consider here, this is still insufficient and we in fact require local laws involving an arbitrary finite number of resolvents of the Hermitisation. A large part of this paper is therefore dedicated to proving such local laws for the model in Definition \ref{def1}. Our approach to multi-resolvent local laws is based on the recent ``Zigzag" method of Cipolloni--Erd\H{o}s--Schr\"{o}der \cite{cipolloni_mesoscopic_2024}, which has proven successful in a variety of applications (see e.g. \cite{bao_law_2025,cipolloni_eigenvector_2025,cipolloni_optimal_2024,cipolloni_universality_2024,erdos_zigzag_2025}). Here we adapt this method to handle matrices with slowly decaying moments by combining it with an iterated cumulant expansion. Iterated cumulant expansions have been a central tool in the study of sparse random matrices (see for example the works \cite{lee_local_2018,huang_transition_2020,he_fluctuations_2021,schnelli_convergence_2022,lee_higher_2024} which are concerned with edge statistics of sparse Hermitian random matrices). In our context, the iterated cumulant expansion solves a closure problem: when attempting to gain from off-diagonal resolvent entries by the Cauchy--Schwarz inequality and the resolvent identity, we end up with entries of higher order products of resolvents. This problem also appears in the study of multi-resolvent local laws for Hermitian matrices \cite{cipolloni_optimal_2022}, where it is solved by a so-called ``reduction inequality". In the sparse case we cannot afford the extra powers of $N$ entailed by the reduction inequality and so resort to an iterated cumulant expansion.

We now come to the statement of our main result. Let
\begin{align}
    \rho^{(k)}_{GinUE}(\mbf{z})&=\det\left[\frac{1}{\pi}e^{-\frac{1}{2}(|z_{i}|^{2}+|z_{j}|^{2})+\bar{z}_{i}z_{j}}\right]_{i,j=1}^{k}
\end{align}
denote the limiting bulk correlation function of the complex Ginibre ensemble (GinUE).
\begin{theorem}\label{thm1}
Let $\epsilon>0$ and $X\in\mc{S}_{N}(\epsilon)$ with eigenvalues $\lambda_{i}(X),\,i=1,...,N$. Then, for any fixed $z\in\mbb{D}$, $k>0$ and $f\in C^{2}_{c}(\mbb{C}^{k})$, there is an $\omega>0$ such that
\begin{align}
    \mbb{E}\sum_{i_{1}\neq\cdots\neq i_{k}}f(N^{1/2}(\lambda_{i_{1}}(X)-z),...,N^{1/2}(\lambda_{i_{k}}(X)-z))&=\int_{\mbb{C}^{k}}f(\mbf{z})\rho^{(k)}_{GinUE}(\mbf{z})\diff\mbf{z}+O(N^{-\omega}),
\end{align}
for sufficiently large $N$.
\end{theorem}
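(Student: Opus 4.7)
My plan is to follow the Girko--Hermitisation framework and reduce \Cref{thm1} to a sparse multi-resolvent local law for the Hermitisation, then pass to the GinUE statistics along an Ornstein--Uhlenbeck flow. For each spectral parameter $w$ entering the Girko integral, one considers the Hermitisation of $X - z - w/\sqrt{N}$ and its resolvent $G^{z+w/\sqrt{N}}(\eta) = (H^{z+w/\sqrt{N}} - i\eta)^{-1}$ on the imaginary axis; Girko's identity writes $\log|\det(X-z-w/\sqrt{N})|$ as an integral of $\Im\tr G^{z+w/\sqrt{N}}(\eta)$ in $\eta$, regularised below some scale $\eta \gtrsim N^{-1-c}$. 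A $k$-fold application then rewrites the left-hand side of \Cref{thm1} as the expectation of an integral of $\prod_{\ell=1}^{k} \Delta_{w_\ell} f$ against a product of $k$ such log-determinants, which is ultimately controlled by joint estimates on products of $k$ resolvents of Hermitisations at different $z_\ell$'s.

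The main technical step, as flagged in the introduction, is to prove that for any fixed $k$ and any block-constant deterministic $A_1,\ldots,A_k$ the multi-resolvent quantity
\[
\tr\bigl(G^{z_1}(\eta_1) A_1 \cdots G^{z_k}(\eta_k) A_k\bigr)
\]
concentrates around a deterministic approximation built from the single-resolvent MDE and the $A_j$, down to the optimal scale $\eta \gtrsim N^{-1+\delta}$. For this I would implement the Zigzag dynamics of Cipolloni--Erd\H{o}s--Schr\"{o}der: let $X_t$ evolve under an OU flow toward a complex Ginibre matrix, use the stability of the deterministic equation to propagate a weak bound forward in time, and at each time sharpen the fluctuation term by a cumulant expansion in the entries of $X_t$. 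A nested induction on $k$ and on the scale $\eta$ would then upgrade from the single-resolvent estimate to the multi-resolvent statement.

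The essential new ingredient is the iterated cumulant expansion needed to close an otherwise uncontrollable hierarchy. The obstruction is this: when one Cauchy--Schwarzes an off-diagonal resolvent entry to gain a factor $\sqrt{\Im m}/\sqrt{N\eta}$, one is left with a length-$2k$ diagonal resolvent product, one order higher in $k$ than the quantity being estimated. In the setting $\mbb{E}|X_{ij}|^r \lesssim N^{-r/2}$ of \cite{maltsev_bulk_2024} this is closed at the cost of a small power of $N$ through a reduction inequality, but under the slow decay \ref{def1:3} no such loss can be afforded. Instead, each time a higher-order multi-resolvent term is produced I would expand it in a further cumulant, gaining a factor of $q^{-1}$ per expansion from \ref{def1:3}, and iterate until every surviving term is either negligible by the sparsity bound or of the same multi-resolvent form already covered by the inductive hypothesis. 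This has to be executed on the $2\times 2$ block structure of the Hermitisation with the deterministic insertions $A_j$ tracked throughout the expansion.

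Once the sparse multi-resolvent local law is established, the transfer to GinUE is standard: running $X$ along the OU flow controls the $t$-derivative of the Girko integrand via the local law, and at the Ginibre endpoint the bulk correlation function is exactly $\rho^{(k)}_{GinUE}$. The exponent $\omega>0$ is determined by the excess in the local law and the flow-matching step, and must be strong enough to survive the $\eta$-integration in Girko, which is the one place where constants in the local law must be chosen with care. I expect the main obstacle to be the iterated cumulant expansion itself: organising its combinatorially growing family of generated terms uniformly in $k$, and tracking the $q$-dependence delicately enough that the final error obeys the target bound at every scale $\eta \gtrsim N^{-1+\delta}$.
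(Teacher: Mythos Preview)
Your outline for the multi-resolvent local law matches the paper's approach to Theorem~\ref{thm3}: Zigzag dynamics plus an iterated cumulant expansion, with each iteration gaining a factor of $q^{-1}$ until the hierarchy closes. The gap is in your last paragraph. You claim the transfer to GinUE is ``standard''---run the OU flow to the Ginibre endpoint, controlling the $t$-derivative of the Girko integrand via the local law. This fails: the Green's function comparison bounds the total change in the Girko integrand over all $N^{2}$ entry swaps by $O(N^{1+C\xi}t/q)$, which is small only for $t\ll q/N=N^{-1+\epsilon}$. Equivalently, in the continuous-time picture the $t$-derivative carries a factor $1+(q\eta)^{-1}$, and the Girko integral runs down to $\eta\sim N^{-1-\xi}$, so integrating over $t=O(1)$ blows up. You can remove a Gaussian component of variance at most $N^{-1+\epsilon}$, not reach the endpoint.

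What is missing is a separate argument that $X+\sqrt{t}\,Y$ already has universal bulk correlations at the short time $t\geq N^{-1+2\epsilon}$. The paper supplies this as Theorem~\ref{thm2}, proved not by any flow but by partial Schur decomposition followed by a direct asymptotic analysis of the resulting integral formula for $\rho^{(k)}_{N}$. The multi-resolvent law enters precisely here, as the deterministic hypothesis $X\in\mc{M}_{N}(z,n,\epsilon)$ (condition~\ref{def:multiLL}) under which that analysis can be pushed down to $t=N^{-1+2\epsilon}$; with only two-resolvent input one is stuck at $t=N^{-1/3+\epsilon}$, as in \cite{maltsev_bulk_2024}. Your proposal treats this step as routine when it is in fact half of the paper.
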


This will follow from an intermediate result concerning the time $t$ for which the Gaussian-divisible matrix $X+\sqrt{t}Y$ (where $Y$ is a GinUE matrix) can be shown to have universal bulk correlation functions. For this we consider a deterministic $X$ satisfying certain conditions. To formulate these conditions, we introduce the Hermitisation
\begin{align}
    W_{z}&:=\begin{pmatrix}0&X_{z}\\X^{*}_{z}&0\end{pmatrix},
\end{align}
where $X_{z}:=X-z$, and we set $W\equiv W_{0}$. Here and in the following we identify scalars with multiples of the identity matrix. Let
\begin{align}
    G_{z}(w)&:=(W_{z}-w)^{-1}=\begin{pmatrix}w\wt{H}_{z}(w)&X_{z}H_{z}(w)\\H_{z}(w)X^{*}_{z}&w H_{z}(w)\end{pmatrix},
\end{align}
where
\begin{align}
    H_{z}(w)&=(|X_{z}|^{2}-w^{2}),\\
    \wt{H}_{z}(w)&=(|X^{*}_{z}|^{2}-w^{2})^{-1}.
\end{align}
Define the block matrix
\begin{align}
    F&:=\begin{pmatrix}0&1_{N}\\0&0\end{pmatrix}\in\mbb{C}^{2N\times2N}.
\end{align}
The conditions on $X$ are as follows, where $\mbb{D}\subset\mbb{C}$ denotes the open unit disk. 
\begin{definition}\label{def2}
Let $z\in\mbb{D}$, $\delta,\tau>0$, $n\in\mbb{N}$ and
\begin{align}
    \mc{D}(\delta,\tau)&:=\{w=E+i\eta\in\mbb{C}:|E|\leq\delta|\eta|,\,N^{-1+\tau}\leq|\eta|\leq10\}.\label{eq:D}
\end{align}
A matrix $X\in\mbb{C}^{N\times N}$ belongs to the set $\mc{M}_{N}(z,n,\tau)$ if there are constants $\delta,c,C>0$ such that:
\begin{enumerate}[label=(\alph*),ref=\thedefinition (\alph*)]
\item \[\|X\|\leq e^{C\log^{2}N};\]
\item \[c\leq\Im\Tr{G_{1}}\leq C;\]
\item \[c\leq\eta_{1}\eta_{2}\Tr{H_{z}(i\eta_{1})\wt{H}_{z}(i\eta_{2})}\leq C;\]
\item \[c\leq\eta_{1}^{3}H^{2}_{z}(i\eta_{1})\leq C;\]
\item\label{def:multiLL} \[|\Tr{G_{1}B_{1}\cdots G_{k}B_{k}}|\leq \frac{C}{\eta^{k/2-1}},\]
\end{enumerate}
uniformly in $w_{j}\in\mc{D}(\delta,\tau),\,B_{j}\in\{F,F^{*}\}$ and $k=2,...,2n$, where $\eta=\min|\Im w_{j}|$ and we used the shorthand $G_{j}:=G_{z}(w_{j})$.
\end{definition}
Note that there is some redudancy in this definition: the upper bound in (c) follows from the $k=2$ instance of (e).

In \cite{maltsev_bulk_2024} it was shown that the correlation functions are universal for $t=N^{-1/3+\epsilon}$ assuming conditions (a)-(d). With the addition of condition (e), we are able to reduce $t$ from $N^{-1/3+\epsilon}$ to $N^{-1+\epsilon}$.
\begin{theorem}\label{thm2}
Fix $\epsilon>0$, $z\in\mbb{D}$ and $k>0$. Set
\begin{align}
    n_{\epsilon,k}&:=\left\lceil\frac{3\cdot 2^{k}}{\epsilon}+2(2^{k}-1)\right\rceil+1.\label{eq:n_eps}
\end{align}
Let $X\in\mc{M}_{N}(z,n_{\epsilon,k},\epsilon)$ be deterministic and $Y$ be a GinUE(N) matrix. For $t>0$, let $\lambda_{i},\,i\in[N]$ denote the eigenvalues of $X+\sqrt{t}Y$. Then for any $t\geq N^{-1+2\epsilon}$ there is an $\eta_{z}\simeq t$ such that
\begin{align}
    t\Tr{H_{z}(i\eta_{z})}&=1.
\end{align}
Let
\begin{align}
    \sigma_{z}&:=\eta_{z}^{2}\Tr{H_{z}(i\eta_{z})\wt{H}_{z}(i\eta_{z})}+\frac{|\Tr{H_{z}^{2}(i\eta_{z})X_{z}}|^{2}}{\Tr{H_{z}^{2}(i\eta_{z})}}.\label{eq:sigma}
\end{align}
Then for any bounded and measurable $f:\mbb{C}^{k}\to\mbb{C}$ we have
\begin{align}
    &\mbb{E}\sum_{i_{1}\neq\cdots\neq i_{k}}f(N^{1/2}\sigma^{1/2}_{z}(\lambda_{i_{1}}-z),...,N^{1/2}\sigma^{1/2}_{z}(\lambda_{i_{k}}-z))\nonumber\\&=\int_{\mbb{C}^{k}}f(\mbf{z})\rho^{(k)}_{GinUE}(\mbf{z})\diff\mbf{z}+O\left(\frac{\log N}{\sqrt{Nt}}\right).
\end{align}
\end{theorem}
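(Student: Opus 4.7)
The strategy extends the two-step approach of \cite{maltsev_bulk_2024} to the optimal short-time scale $t\sim N^{-1+\epsilon}$, which is made possible by the multi-resolvent local law in condition \ref{def:multiLL}. First, apply Girko's Hermitisation formula to represent, for each smooth test factor $f_{j}$,
\begin{align*}
    \sum_{i}f_{j}\bigl(N^{1/2}\sigma_{z}^{1/2}(\lambda_{i}-z)\bigr)&=\frac{1}{4\pi}\int_{\mbb{C}}\Delta f_{j}(u)\,\log|\det X_{z_{u}}|^{2}\,\diff u,\qquad z_{u}:=z+\frac{u}{(N\sigma_{z})^{1/2}}.
\end{align*}
After inclusion–exclusion to deal with the $i_{1}\neq\cdots\neq i_{k}$ constraint, the $k$-point correlation function becomes an integral of a product of $k$ log-determinants of the Hermitisations $W_{z_{u_{j}}}$, so that Theorem \ref{thm2} reduces to a joint limit theorem for these log-determinants matching the one satisfied by GinUE.

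Regularise each log-determinant at the scale $\eta_{0}\sim\eta_{z}\sim t$ through
\begin{align*}
    \log\det(W_{z}^{2}+\eta_{0}^{2})&=\log\det(W_{z}^{2}+T^{2})-\int_{\eta_{0}}^{T}2\eta\,\Tr{H_{z}(i\eta)}\,\diff\eta,\qquad T=e^{C\log^{2}N}.
\end{align*}
The deterministic boundary at $T$ is controlled by condition (a). The interval $[0,\eta_{0}]$ is treated using Gaussian divisibility: on this scale the added $\sqrt{t}Y$ smooths the singular behaviour of the log-determinant, and a direct computation using the Brownian-motion structure of $X+\sqrt{s}Y$, $s\in[0,t]$, reproduces the GinUE Wick contribution up to error $O(\log N/\sqrt{Nt})$, with $\sigma_{z}$ emerging from the matrix Dyson equation at the matching $\eta_{z}$ as the natural variance, cf.~\eqref{eq:sigma}. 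The remaining task is a joint CLT for $(\Tr{G_{z_{u_{j}}}(i\eta_{j})})_{j=1}^{k}$ uniformly in $\eta_{j}\in[\eta_{0},T]$ and $u_{j}$ in the supports of the $\Delta f_{j}$.

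To establish this CLT the plan is to bound the $m$-th joint cumulants of the traces for $m\geq 3$. Carrying out an iterated cumulant expansion in the entries of the Gaussian component $Y$, each such cumulant decomposes into a sum of traces
\begin{align*}
    \Tr{G_{z_{u_{i_{1}}}}(i\eta_{i_{1}})B_{1}G_{z_{u_{i_{2}}}}(i\eta_{i_{2}})B_{2}\cdots G_{z_{u_{i_{\ell}}}}(i\eta_{i_{\ell}})B_{\ell}},\qquad B_{r}\in\{F,F^{*}\},
\end{align*}
to which condition \ref{def:multiLL} applies and furnishes the sharp bound $C\eta^{-\ell/2+1}$; since $\eta\gtrsim t$ this yields the required decay of the higher joint cumulants. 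The second joint cumulants are matched with GinUE via the explicit resolvent identities combined with conditions (c)–(d).

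\textbf{Main obstacle.} The principal difficulty is the combinatorial bookkeeping of the iterated cumulant expansion: each pass closes an off-diagonal resolvent entry but roughly doubles the number of resolvents in the resulting trace, so that controlling the $k$-point function requires invoking \ref{def:multiLL} with $\ell$ of order $2^{k}$. This accounts for the $2^{k}$ factor in \eqref{eq:n_eps}; the additional factor $3/\epsilon$ reflects the number of iterations needed to descend from the macroscopic scale $\eta\sim 1$ to $\eta\sim N^{-1+\epsilon}$, each step gaining a factor $N^{\epsilon}$ from the sparsity parameter. Keeping all these estimates uniform in the $k$ base points $z_{u_{j}}$ and in the energies $\eta_{j}$ is the main technical content; the resulting error $O(\log N/\sqrt{Nt})$ is optimal up to the logarithm, being already saturated by the CLT fluctuation of a single $\Tr{H_{z}(i\eta)}$ at the scale $\eta\sim t$.
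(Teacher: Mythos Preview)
Your proposal takes a genuinely different route from the paper, and it has a real gap.

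\textbf{What the paper actually does.} The paper explicitly remarks that it does \emph{not} use Girko's formula for Theorem~\ref{thm2}; Girko is reserved for the moment-matching step in Theorem~\ref{thm1}. Instead, the proof of Theorem~\ref{thm2} proceeds via the partial Schur decomposition: formula~\eqref{eq:rho(k)} writes $\rho^{(k)}_{N}$ as a nested integral over unit vectors $\mbf{v}_{1},\ldots,\mbf{v}_{k}$ drawn from the measures $\mu_{i}$ in~\eqref{eq:mu(i)}, with an inner expectation $F_{N-k}$ over a smaller Ginibre matrix. The analysis then consists of (i) concentration of quadratic forms $\mbf{v}^{*}B\mbf{v}$ under $\mu_{i}$ (Lemma~\ref{lem:conc}), (ii) propagating the multi-resolvent bounds from $X^{(i-1)}$ to the projected matrix $X^{(i)}$ with high probability (Lemma~\ref{lem:traceApproximation}), and (iii) a Taylor expansion of the relevant determinant, truncated using condition~\ref{def:multiLL} (Lemma~\ref{lem:detApprox2}). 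This is where the constants in $n_{\epsilon,k}$ originate: each passage $X^{(i-1)}\to X^{(i)}$ halves the available order via $n_{i}=n_{i-1}/2-1$ (Lemma~\ref{lem:traceApproximation}), giving the factor $2^{k}$, and at each stage one needs $n_{i}\epsilon>3$ to truncate the log-determinant expansion (Lemma~\ref{lem:detApprox2}), giving the $3/\epsilon$.

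\textbf{The gap in your plan.} Your CLT-via-Girko scheme requires bounding joint cumulants of traces $\Tr{G_{z_{u_{j}}}(i\eta_{j})}$ where $G$ is the resolvent of the Hermitisation of the \emph{random} matrix $X+\sqrt{t}Y$. But condition~\ref{def:multiLL} is a hypothesis on the \emph{deterministic} resolvent of $X$ alone. You never explain how to transfer these deterministic bounds to the random resolvent at the scale $\eta\sim t\sim N^{-1+\epsilon}$; this would itself require a multi-resolvent local law for the Gaussian-divisible ensemble, which is not among the hypotheses and is essentially the content of Section~\ref{sec:zig} applied in reverse. Relatedly, your explanation of $n_{\epsilon,k}$ invokes a ``sparsity parameter'' and iterations in $\eta$, but in Theorem~\ref{thm2} the matrix $X$ is deterministic and there is no $q$; the recursion producing $n_{\epsilon,k}$ is the $n\mapsto n/2-1$ loss at each Schur step, not a zigzag in the spectral parameter. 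Finally, the treatment of the interval $[0,\eta_{0}]$ by ``a direct computation using the Brownian-motion structure'' is where all the difficulty lies in any Girko-based approach to bulk non-Hermitian universality, and your proposal gives no mechanism for it.
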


It is perhaps instructive to compare this result with the corresponding result for Hermitian matrices (i.e. $W+\sqrt{t}V$ where $W$ is a deterministic Hermitian matrix and $V$ is a Gaussian Hermitian matrix). Landon--Sosoe--Yau show in \cite[Theorem 2.2]{landon_fixed_2019} that local universality holds for $t>N^{-1+\epsilon}$ essentially under the sole condition
\begin{align}
    c&\leq\frac{1}{N}\Im\tr(W-E-i\eta)^{-1}\leq C\label{eq:hermitianCondition}
\end{align}
for $|E|<\delta$ and $\eta\geq N^{-1+\epsilon}$. This is analogous to condition (b) in Definition \ref{def2}. Conditions (c) and (e) are new to the non-Hermitian case. If $X$ is Hermitian, then all traces of the form $\tr G_{1}B_{1}\cdots G_{m}B_{m}$ for $B_{i}\in\{F,F^{*}\}$ can be reduced to traces of the form $\tr G_{1}\cdots G_{m}$, which can in turn be reduced to the trace of a single resolvent by contour integration and the resolvent identity. Thus conditions (b)-(e) collapse to a single condition on the trace of one resolvent, which suggests that these conditions are natural for general matrices and that one should perhaps not expect Theorem \ref{thm2} to hold under significantly weaker conditions.

The condition \eqref{eq:hermitianCondition} in the Hermitian case has a simple interpretation: the lower and upper bounds ensure that there are enough but not too many eigenvalues in a neighbourhood of $E$. By contrast, it is not obvious how to interpret \ref{def:multiLL}. One possible interpretation is obtained by observing that, since the eigenvalue spacing is $O(N^{-1/2})$ in the complex plane, we should have some uniform control of $\log\det(W_{w}-i\eta)$ for $|w-z|<CN^{-1/2}$. Writing $x=N^{1/2}(w-z)$ and extracting a factor of $\det(W_{z}-i\eta)$, we find
\begin{align}
    \log\frac{\det(W_{w}-i\eta)}{\det(W_{z}-i\eta)}&=\tr\log\left(1-\frac{1}{\sqrt{N}}G_{z}(i\eta)(xF+\bar{x}F^{*})\right).\label{eq:logdet}
\end{align}
From \ref{def:multiLL}, if $\eta>N^{-1+\epsilon}$ we can deduce that
\begin{align*}
    N^{-n/2}\|(G_{z}(i\eta)F)^{n}\|&\leq\frac{CN^{1/2}}{(N\eta)^{n/2}}\leq CN^{\frac{1-n\epsilon}{2}}.
\end{align*}
If $n\epsilon>3$ we can truncate the Taylor series of the logarithm in the right hand side of \eqref{eq:logdet} at the $n$-th term and estimate each previous term by \ref{def:multiLL}. Thus this condition allows us to approximate the function
\begin{align*}
    x&\mapsto\log\det(W_{z+N^{-1/2}x}-i\eta)
\end{align*}
when $\eta>N^{-1+\epsilon}$. The relevance of this function can be seen from Girko's formula,
\begin{align*}
    \sum_{n}f(N^{1/2}(\lambda_{n}(X)-z))&=\frac{1}{4N\pi}\int_{\mbb{C}}\Delta f(x)\log\det W_{z+N^{-1/2}x}\diff^{2}x.
\end{align*}
In practice, we do not use Girko's formula to prove Theorem \ref{thm2}, relying instead on the method of partial Schur decomposition.

Girko's formula is, however, used to prove Theorem \ref{thm1}: based on the standard moment-matching method, Theorem \ref{thm1} will follow from Theorem \ref{thm2} if we can show that matrices in $\mc{S}_{N}(\epsilon)$ are also in $\mc{M}_{N}(z,n,\xi)$ with very high probability. This amounts to proving a sparse multi-resolvent local law. We define the block matrices
\begin{align}
    E_{\pm}&=\begin{pmatrix}1_{N}&0\\0&\pm1_{N}\end{pmatrix}\in\mbb{C}^{2N\times2N},\label{eq:Epm}
\end{align}
and let
\begin{align}
    \mbb{H}&:=\text{span}\{E_{+},E_{-},F,F^{*}\},
\end{align}
denote the subspace of $2N\times 2N$ matrices whose $N\times N$ blocks are multiples of $1_{N}$ (such matrices can be thought of as quaternions, hence the notation $\mbb{H}$). The following multi-resolvent law concerns test matrices in $\mbb{H}$. In its statement the matrices $M_{z}(w_{1},B_{1},...,w_{m})$ represent deterministic approximations to resolvent chains which have an explicit definition (see \cite[Definition 4.1]{cipolloni_optimal_2024})
\begin{theorem}[Theorem \ref{thm:multiLL} below]\label{thm3}
Let $\epsilon>0$ and $X\in\mc{S}_{N}(\epsilon)$. Let $m\in\mbb{N}$, $B_{i}\in\mbb{H}$ for $i=1,...,m$ and
\begin{align}
    a&:=\bigl|\{i:B_{i}\in\{F,F^{*}\}\}\bigr|.
\end{align}
Fix $\delta,\tau>0$ and define
\begin{align}
    \mc{D}(\delta,\tau)&:=\bigl\{w=E+i\eta:|E|\leq\delta|\eta|,\,N^{-1+\tau}\leq|\eta|\leq10\bigr\},
\end{align}
Then for any fixed $z\in\mbb{D}$ and sufficiently small $\delta>0$
\begin{align}
    \bigl|\Tr{G_{1}B_{1}\cdots G_{m}B_{m}}-\Tr{M_{z}(w_{1},B_{1},...,w_{m})B_{m}}\bigr|&\prec\left(\frac{1}{N\eta}+\frac{1}{q}\right)\frac{1}{\eta^{m-a/2-1}\wedge1},\\
    \max_{\mathfrak{x},\mathfrak{y}\in[2N]}\bigl|\bigl(G_{1}B_{1}\cdots G_{m}-M_{z}(w_{1},B_{1},...,w_{m})\bigr)_{\mathfrak{x},\mathfrak{y}}\bigr|&\prec\left(\frac{1}{\sqrt{N\eta}}+\frac{1}{q}\right)\frac{1}{\eta^{m-a/2}},
\end{align}
uniformly in $w_{j}\in\mc{D}(\delta,\tau)$, where $G_{j}:=G_{z}(w_{j})$ and $\eta=\min_{j}|\Im w_{j}|$.
\end{theorem}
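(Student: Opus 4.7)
The plan is to prove Theorem~\ref{thm3} by induction on the chain length $m$ via a Zigzag-type argument in the spirit of Cipolloni--Erd\H{o}s--Schr\"{o}der, whose drift cancellations we extend to the sparse regime by means of an iterated cumulant expansion. The base case $m=1$ is a single-resolvent local law for the Hermitisation of a sparse non-Hermitian matrix: a direct cumulant expansion for $\Tr{G_{z}(w)-M_{z}(w)}$ and its isotropic companion yields a quadratic self-consistent inequality that closes with the target rate $\tfrac{1}{N\eta}+\tfrac{1}{q}$. At the macroscopic scale $|\eta|\simeq 1$ the length-$m$ bound is also obtained by direct cumulant expansion with crude operator-norm inputs, providing the initial condition for the flow.

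\textbf{Zigzag step.} Assume the bound is known up to chains of length $m-1$. I would consider the Ornstein--Uhlenbeck flow $X_{t}=e^{-t/2}X+\sqrt{1-e^{-t}}Z$ with $Z$ an independent GinUE matrix, and the corresponding resolvents $G_{t,j}$ at spectral parameters $w_{j}^{t}$ moving along characteristics designed so that the deterministic approximation $M(w_{1}^{t},B_{1},\dots,w_{m}^{t})$ is $t$-invariant. It\^{o}'s formula applied to
\[ \Phi_{m}(t):=\Tr{G_{t,1}B_{1}\cdots G_{t,m}B_{m}}-\Tr{M(w_{1}^{t},B_{1},\dots,w_{m}^{t})B_{m}} \]
produces an exact cancellation of the Gaussian drift, leaving a martingale whose quadratic variation splits into products of two chains each of length $\leq m$ and is therefore controlled by the inductive hypothesis. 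Integrating up to $T\simeq\eta$ and closing with a Green function comparison between $X_{T}$ and $X$ propagates the length-$m$ bound from the global scale down to $|\eta|\geq N^{-1+\tau}$.

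\textbf{Iterated cumulant expansion.} The non-Gaussian correction to the drift is extracted by a cumulant expansion of $X_{ij}$, whose $k$-th term carries a factor $\kappa_{k+1}\sim N^{-1}q^{-(k-1)}$. Each such term generates higher derivatives that produce off-diagonal entries $(GA)_{\mathfrak{x}\mathfrak{y}}$ with $A$ a chain of length $\leq m$. The closure problem is that bounding these by Cauchy--Schwarz together with the Ward identity spawns traces of chains of length $2m$, outside the inductive scope; in the dense case one escapes via a reduction inequality at cost $N^{1/2}$, which is unaffordable when $q\ll N^{1/2}$. Instead I would iterate the cumulant expansion on these off-diagonal factors: every round either produces a diagonal contraction that shortens the chain, or another off-diagonal entry dressed by an additional $q^{-1}$. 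After $O(1/\epsilon)$ rounds the gain in $q$-powers absorbs the residual factors of $N$ and the expansion closes.

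\textbf{Main obstacle.} The hardest step will be the combinatorial and power-counting bookkeeping of the iterated expansion: each round spawns a tree of resolvent derivatives, and one must verify that after grouping terms by topology (which slots are contracted, which carry off-diagonal factors) the accumulated powers of $N$, $\eta$ and $q$ reconcile to the target $\left(\tfrac{1}{N\eta}+\tfrac{1}{q}\right)\cdot\tfrac{1}{\eta^{m-a/2-1}\wedge 1}$. Closing this requires a calibrated choice of flow time $T$ and iteration depth (both depending on $\epsilon$ and $m$), and crucially exploits the fact that the deterministic approximations decay in $\eta$ only to the extent controlled by the number $a$ of $F$ or $F^{*}$ insertions in the chain.
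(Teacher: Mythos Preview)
Your overall architecture---global law at $\eta\simeq1$, then Zigzag with an iterated cumulant expansion replacing the reduction inequality in the comparison step---matches the paper's, and you have correctly located the closure problem as the crux.

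There is, however, a real gap in the zig step. You claim the martingale's quadratic variation ``splits into products of two chains each of length $\leq m$ and is therefore controlled by the inductive hypothesis''. It does not: summing the squared It\^{o} coefficients and applying the Ward identity produces a \emph{single} trace of a chain of $2m$ resolvents (see the display bounding the quadratic variation in the proof of Lemma~\ref{lem:averagedStoppingTime}); in the isotropic case it gives a product of two diagonal entries of chains whose lengths sum to $2m$, so at least one exceeds $m$. A plain induction on $m$, assuming only lengths $\leq m-1$, cannot bound this. The paper's fix is to abandon induction on $m$ and instead run the stopping-time argument simultaneously for all $m\in[M]$ (with $M$ a fixed large even integer), using the two-tier error $\Psi^{av/iso}$ of \eqref{eq:Psi^av}--\eqref{eq:Psi^iso}: the target error for $m\leq M/2$ and a weaker one for $M/2<m\leq M$. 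When $m\leq M/2$ the $2m$-chain lies inside the hierarchy; when $m>M/2$ one reduces it to length-$m$ chains by a spectral-decomposition argument, paying the weaker error. The drift terms $A_{p,r}$ in \eqref{eq:Apq2} also involve chains of length up to $m$ (not $m-1$). Moreover, the iteration that actually lowers the spectral scale is not over $m$ but over domain thresholds $\tau_{j}=1-j\epsilon$: each zag step runs for time $t\lesssim N^{-1+\tau_{j-1}}$ so that the prefactor $1+t/(q\eta)$ coming from the derivative bound \eqref{eq:isoGFT} is $O(1)$ in Gr\"{o}nwall, and one loops $O(1/\epsilon)$ times to reach $\eta\geq N^{-1+\tau}$. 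Your description of the iterated cumulant expansion is in the right spirit, but the paper's mechanism is more specific: one replaces a troublesome off-diagonal entry via the self-consistent identity \eqref{eq:self5}, expands the resulting underlined term by a fresh cumulant expansion that introduces a \emph{new} summation index, and tracks the outcome through the polynomial classes $\mc{P}^{(1)},\mc{P}^{(2)}$ of Definition~\ref{def:polynomials}; a separate fluctuation-averaging input (Lemma~\ref{lem:fluctuationAveraging}) is also needed and is absent from your sketch.
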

Each factor of $F$ or $F^{*}$ reduces the size of the deterministic approximation (see Lemma \ref{lem:deterministicBounds} below) and the error by a factor of $\sqrt{\eta}$. In the language of \cite[Definitions 3.1 and 4.2]{cipolloni_optimal_2024}, $F$ and $F^{*}$ are \textit{regular} matrices (up to terms of $O(\eta)$). They play a role analogous to that of traceless matrices in the multi-resolvent law for Wigner matrices \cite{cipolloni_optimal_2022}.

We remark that with this local law and the arguments in the proofs of Theorems \ref{thm1} and \ref{thm2}, it is straightforward to extend the results on bulk eigenvectors in \cite{dubova_gaussian_2024,osman_least_2024,osman_universality_2024} to matrices satisfying Definition \ref{def1}.

The rest of this paper is organised as follows. In Sections \ref{sec2} and \ref{sec3} we prove Theorems \ref{thm2} and \ref{thm3} respectively. In Section \ref{sec4} we give a brief sketch of the (standard) proof of Theorem \ref{thm1}, which combines Theorems \ref{thm2} and \ref{thm3} with a moment-matching argument. The Appendix contains some deferred proofs.

\paragraph{Notation} We use $[n]$ to denote the set $\{1,2,...,n\}$. The open complex unit disk is denoted by $\mbb{D}$. For a matrix $A\in\mbb{C}^{d\times d}$, $\Tr{A}:=d^{-1}\tr A$ denotes the normalised trace. For a probability distribution $\mc{D}$, $X\sim\mc{D}$ means that $X$ has distribution $\mc{D}$. Throughout the paper, $N$ will be the fundamental large parameter taken to infinity. Thus, ``constant" or ``fixed" will mean independent of $N$. The notation $x\lesssim y$ means that there is a constant $C>0$ such that $x\leq Cy$, and $x\simeq y$ means that $x\lesssim y$ and $y\lesssim x$. We also use the notation of stochastic domination (see e.g. \cite[Definition 2.1]{erdos_local_2013}): $X\prec Y$ or $X=O_{\prec}(Y)$ if for any $D>0$ and $\xi>0$ we have
\begin{align*}
    |X|\leq N^{\xi}|Y|
\end{align*}
with probability at least $1-N^{-D}$ for sufficiently large $N$. Statements such as this that hold with probability at least $1-N^{-D}$ for any $D>0$ will be referred to as holding ``very high probability".

\section{Proof of Theorem \ref{thm2}}\label{sec2}
In this section we prove that, if $X$ satisfies certain multi-resolvent local laws, then the bulk correlation functions of the Gaussian-divisible matrix $X+\sqrt{t}Y$ are universal for $t=N^{-1+\epsilon}$. This should be compared with the earlier result in \cite{maltsev_bulk_2024} which required $t=N^{-1/3+\epsilon}$ assuming only two-resolvent laws. It turns out that to reach $t=N^{-1+\epsilon}$ we need local laws involving a large but finite number (depending on $\epsilon$) of resolvents.

We begin by recalling the partial Schur decomposition (see \cite[Section 4]{maltsev_bulk_2024}). Let $t>0$, $X\equiv X^{(0)}\in\mbb{C}^{N\times N}$ and $\mbf{z}\in\mbb{C}^{k}$. For $i=1,...,k$, we define a set of probability measures $\mu_{i}$ on $S^{N-i}$ through the following recursive procedure which takes a matrix $X^{(i-1)}\in\mbb{C}^{(N-i+1)\times(N-i+1)}$ as input. Let $\mu_{i}$ have density
\begin{align}
    \frac{\diff\mu_{i}}{\diff_{H}\mbf{v}}&=\frac{1}{K_{i}(z_{i})}\left(\frac{N}{\pi t}\right)^{N-i}e^{-\frac{N}{t}\|X^{(i-1)}_{z_{i}}\mbf{v}\|^{2}},\label{eq:mu(i)}
\end{align}
with respect to the Haar measure $\diff_{H}\mbf{v}$, where $K_{i}(z_{i})$ is the normalisation. We denote by $\mbb{E}^{(i)}$ the expectation with respect to $\mu_{i}$. Let $\mbf{v}_{i}\sim\mu_{i}$ and $R_{i}(\mbf{v}_{i})=(\mbf{v}_{i},Q_{i})$ be the Householder matrix whose first column is $\mbf{v}_{i}$. Define
\begin{align}
    X^{(i)}&:=Q_{i}^{*}X^{(i-1)}Q_{i},\label{eq:X(i)}
\end{align}
to be the projection of $X^{(i-1)}$ onto the space complementary to $\mbf{v}_{i}$. We can now define $\mu^{(i+1)}$ in the same way with $X^{(i)}$ as input. Note that the normalisation $K_{i}(z_{i})$ is a random variable depending on $\{\mbf{v}_{j}:j<i\}$.

Define
\begin{align}
    F_{N-k}(\mbf{z},X^{(k)})&:=\mbb{E}_{N-k}\prod_{j=1}^{k}\Bigl|\det\Bigl(X^{(k)}_{z_{i}}+\sqrt{\frac{Nt}{N-k}}Y_{N-k}\Bigr)\Bigr|^{2},\label{eq:F}
\end{align}
where the expectation is with respect to $Y_{N-k}\sim GinUE(N-k)$. Then we have the following formula for the $k$-point functions of $X+\sqrt{t}Y$ \cite[Corollary 4.2]{maltsev_bulk_2024}:
\begin{align}
    \rho^{(k)}_{N}(\mbf{z})&=\left(\frac{N}{\pi t}\right)^{k}|\Delta(\mbf{z})|^{2}K_{1}(z_{1})\mbb{E}^{(1)}\bigl[\cdots K_{k}(z_{k})\mbb{E}^{(k)}\bigl[F_{N-k}(\mbf{z},X^{(k)})\bigr]\cdots\bigr],\label{eq:rho(k)}
\end{align}
where
\begin{align}
    \Delta(\mbf{z})&=\prod_{i<j}(z_{i}-z_{j}).
\end{align}

For $z\in\mbb{D}$, $X^{(j)}\in\mc{M}_{N-j}(z,n,\epsilon)$ and $t\geq N^{-1+2\epsilon}$, define $\eta^{(j)}_{z}\equiv\eta^{(j)}_{z}(t)$ to be the unique positive solution of
\begin{align}
    t\Tr{H^{(j)}_{z}(i\eta^{(j)}_{z})}&=1.\label{eq:eta(j)}
\end{align}
That such a solution exists and satisfies
\begin{align}
    \eta^{(j-1)}_{z}&\simeq t,\\
    |\eta^{(j)}_{z}-\eta^{(j-1)}_{z}|&\lesssim\frac{1}{N},
\end{align}
has been shown in \cite[Eq. (5.10) and (5.11)]{maltsev_bulk_2024}. These properties do not rely on condition \ref{def:multiLL} and so the value of $n$ is irrelevant. In the rest of this section we use the shorthand
\begin{align}
    G^{(j)}_{z}&\equiv G^{(j)}_{z}(i\eta^{(j)}_{z}),\quad H^{(j)}_{z}\equiv H^{(j)}_{z}(i\eta^{(j)}_{z}),\quad \wt{H}^{(j)}_{z}\equiv\wt{H}^{(j)}_{z}(i\eta^{(j)}_{z}),
\end{align}
i.e. we suppress the argument of a resolvent when it is equal to $i\eta^{(j)}_{z}$.

The proof of Theorem \ref{thm2} proceeds by showing that bounds on traces of resolvents carry over from $X^{(i-1)}$ to $X^{(i)}$ with very high probability (with respect to $\mu_{i}$). In other words, the event that $X^{(i)}\in\mc{M}_{N-i}(z,n',\epsilon)$ assuming $X^{(i-1)}\in\mc{M}_{N-i+1}(z,n,\epsilon)$ holds with very high probability, where $n'$ is explicitly determined by $n$. We obtain uniform bounds on $K_{i}$ and $F_{N-k}$ which allow us to restrict to these events. We can then Taylor expand certain determinants that appear in the asymptotic analysis of the integrals in the formula \eqref{eq:rho(k)} for $\rho^{(k)}_{N}$. 

In the remainder of this section we assume $t\geq N^{-1+2\epsilon}$. The first step is to prove concentration of certain quadratic forms in $\mbf{v}\sim\mu^{(1)}$ with improved error (compare with \cite[Lemma 7.2]{maltsev_bulk_2024}).
\begin{lemma}\label{lem:conc}
Let $X\in\mc{M}_{N}(z,2,\epsilon)$, $\mbf{v}\sim\mu^{(1)}$ and $\eta\simeq t$. There are constants $C,c>0$ such that
\begin{align}
    t\left|\eta\mbf{v}^{*}H_{z}(i\eta)\mbf{v}-\eta t\Tr{H_{z}H_{z}(i\eta)}\right|&\leq\frac{C\log N}{\sqrt{Nt}},\label{eq:conc1}\\
    \left|\eta\mbf{v}^{*}\wt{H}_{z}(i\eta)\mbf{v}-\eta t\Tr{H_{z}\wt{H}_{z}(i\eta)}\right|&\leq\frac{C\log N}{\sqrt{Nt}},\label{eq:conc2}\\
    \sqrt{t}\left|\mbf{v}^{*}X_{z}H_{z}(i\eta)\mbf{v}-t\Tr{H_{z}X_{z}H_{z}(i\eta)}\right|&\leq\frac{C\log N}{\sqrt{Nt}},\label{eq:conc3}
\end{align}
with probability $1-e^{-c\log^{2}N}$.
\end{lemma}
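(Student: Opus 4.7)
The plan is to handle the three concentration bounds together: compute each mean under $\mu^{(1)}$ via a saddle-point analysis of the partition function, and control the fluctuations by a combination of L\'evy/log-Sobolev concentration on the sphere and a Hanson--Wright-type bound, both exploiting the two-resolvent estimates guaranteed by the hypothesis $X\in\mc{M}_N(z,2,\epsilon)$.

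For the mean, the key observation is that by rotation invariance of $\mu^{(1)}$ under unitaries commuting with $X_z^*X_z$, the matrix $M:=\mbb{E}^{(1)}[\mbf{v}\mbf{v}^*]$ is a function of $X_z^*X_z$. Diagonalizing $X_z=U\Sigma V^*$ and changing variables to $\mbf{u}=V^*\mbf{v}$, the joint density of $(|u_k|^2)_{k=1}^N$ on the simplex is proportional to $e^{-\sum_k\beta_k p_k}$ with $\beta_k:=N\sigma_k^2/t$, and in the $V$-basis $M$ is diagonal with entries $\mbb{E}[|u_k|^2]$. Using the contour representation
\[
Z(\beta)=(N-1)!\oint\frac{\diff s}{2\pi i}\frac{e^s}{\prod_k(\beta_k-s)},
\]
the real saddle $s^*$ is defined by $\sum_k(\beta_k-s^*)^{-1}=1$; the substitution $s^*=-N\eta_z^2/t$ turns this into $t\Tr{H_z(i\eta_z)}=1$, which is precisely \eqref{eq:eta(j)}. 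Differentiating the saddle value in $\beta_k$ (envelope theorem) gives $\mbb{E}[|u_k|^2]=1/(\beta_k-s^*)+(\text{sub-leading})$, and hence $M=(t/N)H_z+(\text{sub-leading})$, with $H_z\equiv H_z(i\eta_z)$. Contracting with $H_z(i\eta)$, $\wt{H}_z(i\eta)$, $X_z H_z(i\eta)$ reproduces the three deterministic values in \eqref{eq:conc1}--\eqref{eq:conc3}; the sub-leading corrections involve sums such as $\Tr{H_z^2}$, $\Tr{H_z\wt{H}_z(i\eta)}$, $\Tr{H_z X_z H_z(i\eta)}$, each bounded by conditions (c), (d) and \ref{def:multiLL} with $k=2$.

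For concentration around the mean, a L\'evy/log-Sobolev argument on the sphere (with log-Sobolev constant $\simeq N$ by Bakry--\'Emery, combining the sphere's Ricci curvature $N-2$ with the convex tilt) gives the $\log N/\sqrt{Nt}$ rate for \eqref{eq:conc1} and \eqref{eq:conc3}: the Lipschitz constants $\|H_z(i\eta)\|\lesssim 1/\eta^2$ and $\|X_z H_z(i\eta)\|\lesssim 1/\eta$ combine with the prefactors $t\eta$ and $\sqrt t$ to yield sub-Gaussian tails at the desired scale, producing the very-high-probability estimate $1-e^{-c\log^2 N}$. For \eqref{eq:conc2} the spectral-norm bound $\|\wt{H}_z\|\lesssim 1/\eta^2$ is too weak (it would give only $1/\sqrt{Nt^2}$), so I would instead establish a Hanson--Wright-type estimate $\mathrm{Var}(\mbf{v}^*\wt{H}_z\mbf{v})\lesssim\Tr{\wt{H}_z^2}/N$ and bound $\Tr{\wt{H}_z^2}\lesssim 1/\eta^3$ via condition (d) (together with the fact that $X_zX_z^*$ and $X_z^*X_z$ have the same spectrum); multiplying by the $\eta$ prefactor then gives the required $1/\sqrt{Nt}$ scale.

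The main obstacle is the Hanson--Wright step for \eqref{eq:conc2}. Because $\wt{H}_z$ does not commute with $X_z^*X_z$, in the $V$-basis the quadratic form $\mbf{v}^*\wt{H}_z\mbf{v}$ is not merely a linear functional of $(|u_k|^2)$ but also involves cross terms $u_k\bar u_l$ ($k\neq l$) weighted by entries of $W^*(\Sigma^2+\eta^2)^{-1}W$ with $W=U^*V$. The phase-rotation invariance of $\mu^{(1)}$ kills the means of the cross terms, but their contributions to the variance must be estimated via cumulant/semigroup arguments on the sphere, and the final bound depends crucially on having the two-resolvent law \ref{def:multiLL} available, rather than only the single-resolvent control used in the analogous \cite[Lemma 7.2]{maltsev_bulk_2024}.
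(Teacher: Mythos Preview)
Your overall architecture (identify the mean, then control fluctuations by sub-Gaussian concentration) is reasonable, and for \eqref{eq:conc1} and \eqref{eq:conc3} the Lipschitz/LSI route does give the right $O((Nt)^{-1/2})$ scale. The gap is in your treatment of \eqref{eq:conc2}.

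The Hanson--Wright-type variance bound you propose, $\mathrm{Var}_{\mu^{(1)}}(\mbf{v}^{*}\wt{H}_{z}\mbf{v})\lesssim\Tr{\wt{H}_{z}^{2}}/N$, is not correct for the tilted measure $\mu^{(1)}$. The density \eqref{eq:mu(i)} forces $\mbf{v}$ to concentrate in the small right-singular directions of $X_{z}$, so the effective covariance of $\mbf{v}$ is $\simeq (t/N)H_{z}$, not $(1/N)I_{N}$. Consequently the second-order (Hanson--Wright) parameter of $\mbf{v}^{*}B\mbf{v}$ under $\mu^{(1)}$ is $\tfrac{t^{2}}{N}\Tr{(H_{z}B)^{2}}$, not $\Tr{B^{2}}/N$. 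For $B=\wt{H}_{z}(i\eta)$ this means you must control $\Tr{(H_{z}\wt{H}_{z}(i\eta))^{2}}$, which is a \emph{four}-resolvent quantity; condition (d), which only gives $\Tr{H_{z}^{2}}\simeq\eta^{-3}$, does not bound it. The required input is condition \ref{def:multiLL} with $k=4$ (available precisely because $n=2$ in the hypothesis $X\in\mc{M}_{N}(z,2,\epsilon)$), which yields $\Tr{(H_{z}\wt{H}_{z})^{2}}\lesssim t^{-5}$ and hence the variance scale $1/(Nt^{3})$ you want after multiplying by $\eta^{2}$. Your final paragraph hints at the difficulty of the cross terms but invokes only the two-resolvent case of \ref{def:multiLL}; that is not enough.

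By contrast, the paper's proof bypasses the separate mean/fluctuation analysis entirely: it quotes the moment-generating-function bound from \cite[Eq.~(7.12)]{maltsev_bulk_2024},
\[
\mbb{E}\,e^{\lambda\mbf{v}^{*}B\mbf{v}}\lesssim\exp\Bigl\{\lambda t\Tr{H_{z}B}+\frac{\lambda^{2}t^{2}\Tr{(H_{z}B)^{2}}}{N\bigl(1-|\lambda|tN^{-1/2}\Tr{(H_{z}B)^{2}}^{1/2}\bigr)}\Bigr\},
\]
which already encodes both the correct centring $t\Tr{H_{z}B}$ and the correct sub-Gaussian parameter $\tfrac{t^{2}}{N}\Tr{(H_{z}B)^{2}}$. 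All three estimates then reduce to bounding $\Tr{(H_{z}B)^{2}}$ for $B=H_{z}(i\eta)$, $\wt{H}_{z}(i\eta)$, $\Re(X_{z}H_{z}(i\eta))$, which is where the two- and four-resolvent inputs enter. If you want to salvage your approach, you should either derive the same MGF bound or, in your Hanson--Wright step, work with the anisotropic covariance $(t/N)H_{z}$ and then invoke the $k=4$ case of \ref{def:multiLL}.
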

Note that in \eqref{eq:conc3} the error is larger than the expectation when $t\ll N^{-1/2}$, so strictly speaking this quadratic form does not concentrate. As we will see later, what matters is an upper bound on $\mbf{v}^{*}X_{z}H_{z}\mbf{v}$.
\begin{proof}
Let $B=B^{*}\in\mbb{C}^{N\times N}$ and $\lambda\in\mbb{R}$ such that
\begin{align}
    \frac{t|\lambda|}{N}\|H_{z}^{1/2}BH_{z}^{1/2}\|<1.
\end{align}
Then we have the following bound on the moment generating function of $\mbf{v}^{*}B\mbf{v}$ (see \cite[Eq. (7.12)]{maltsev_bulk_2024}):
\begin{align*}
    m_{B}(\lambda)&:=\mbb{E}\left[e^{\lambda \mbf{v}^{*}B\mbf{v}}\right]\lesssim\exp\left\{\lambda t\Tr{H_{z}B}+\frac{\lambda^{2}t^{2}\Tr{(H_{z}B)^{2}}}{N\left(1-\frac{|\lambda|t}{N}\left\|\sqrt{H_{z}}B\sqrt{H}_{z}\right\|\right)}\right\}.
\end{align*}
Using the bound $\|A\|\leq(\tr|A|^{2})^{1/2}$ we can obtain a simpler bound
\begin{align}
    m_{B}(\lambda)&\lesssim\exp\left\{\lambda t\Tr{H_{z}B}+\frac{\lambda^{2}t^{2}\Tr{(H_{z}B)^{2}}}{N\left(1-\frac{|\lambda|t}{N^{1/2}}\Tr{(H_{z}B)^{2}}^{1/2}\right)}\right\}.\label{eq:MGFBound2}
\end{align}
The bound in \eqref{eq:conc1} follows directly from
\begin{align*}
    \Tr{(H_{z}H_{z}(i\eta))^{2}}&\lesssim\frac{1}{t^{7}}.
\end{align*}

For \eqref{eq:conc2}, we choose $B=\eta\wt{H}_{z}(i\eta)$. By \ref{def:multiLL} with $k=4$ we have
\begin{align*}
    \Tr{(H_{z}\wt{H}_{z}(i\eta))^{2}}&\lesssim\frac{1}{t^{5}},
\end{align*}
and so
\begin{align*}
    m_{F}(\lambda)&\leq C\exp\left\{\frac{C\lambda^{2}}{Nt\left(1-\frac{C|\lambda|}{\sqrt{Nt}}\right)}\right\},\quad|\lambda|<\sqrt{Nt}/C.
\end{align*}
The bound \eqref{eq:conc2} now follows by Markov's inequality with the choice $\lambda=c(Nt)^{1/2}$ for some small $c$.

For \eqref{eq:conc3}, we choose $B=\sqrt{t}\Re(X_{z}H_{z})$ and note that
\begin{align*}
    \Tr{(H_{z}\Re(X_{z}H_{z}(i\eta)))^{2}}&\leq\Tr{H_{z}X_{z}H_{z}(i\eta)H_{z}H_{z}(i\eta)X_{z}^{*}}\\
    &=\Tr{H_{z}\wt{H}_{z}(i\eta)^{2}(1-\eta_{z}^{2}\wt{H}_{z})}\\
    &\leq\frac{1}{\eta^{2}}\Tr{H_{z}\wt{H}_{z}(i\eta)}\\
    &\lesssim\frac{1}{t^{4}}.
\end{align*}
Thus we again find
\begin{align*}
    m_{F}(\lambda)&\leq C\exp\left\{\frac{C\lambda^{2}}{Nt\left(1-\frac{C|\lambda|}{\sqrt{Nt}}\right)}\right\},\quad|\lambda|<\sqrt{Nt}/C,
\end{align*}
and complete the proof by Markov's inequality.
\end{proof}
Note that the first and third bounds only require two-resolvent laws, so the four-resolvent law is needed for the second bound. Compared with \cite[Lemma 7.2]{maltsev_bulk_2024}, we have gained a factor of $t$.

From this lemma we deduce that, for $\mbf{v}\sim\mu^{(1)}$,
\begin{align}
    \left|\det(1_{2}\otimes\mbf{v}^{*})G_{z}(1_{2}\otimes\mbf{v})\right|&=\eta_{z}^{2}(\mbf{v}^{*}H_{z}\mbf{v})(\mbf{v}^{*}\wt{H}_{z}\mbf{v})+|\mbf{v}^{*}X_{z}H_{z}\mbf{v}|^{2}\nonumber\\
    &=\left[1+O\left(\frac{\log N}{\sqrt{Nt}}\right)\right]t^{2}\eta_{z}^{2}\Tr{H_{z}^{2}}\Tr{H_{z}\wt{H}_{z}}+O\left(\frac{\log^{2}N}{Nt^{2}}\right)\nonumber\\
    &=\left[1+O\left(\frac{\log N}{\sqrt{Nt}}\right)\right]t^{2}\eta_{z}^{2}\Tr{H^{2}_{z}}\Tr{H_{z}\wt{H}_{z}},\label{eq:detApprox1}
\end{align}
with probability $1-e^{-c\log^{2}N}$, where the last line follows from $\eta_{z}^{2}\Tr{H_{z}\wt{H}_{z}}\geq c>0$ and $\eta_{z}^{3}\Tr{H^{2}_{z}}\geq c>0$. This should be compared with the displayed equation immediately preceeding \cite[Lemma 7.3]{maltsev_bulk_2024}, where the error is $(Nt^{2})^{-1}\log N$.

Let $V=1_{2}\otimes\mbf{v}\in\mbb{C}^{2N\times2N}$. We also obtain an entry-wise bound for $(V^{*}G_{z}V)^{-1}$:
\begin{align}
    (V^{*}G_{z}V)^{-1}&=\frac{1}{\eta^{2}_{z}(\mbf{v}^{*}H_{z}\mbf{v})(\mbf{v}^{*}\wt{H}_{z}\mbf{v})+|\mbf{v}^{*}X_{z}H_{z}\mbf{v}|^{2}}\begin{pmatrix}i\eta_{z}\mbf{v}^{*}H_{z}\mbf{v}&\mbf{v}^{*}X_{z}H_{z}\mbf{v}\\\mbf{v}^{*}H_{z}X_{z}^{*}\mbf{v}&i\eta_{z}\mbf{v}^{*}\wt{H}_{z}\mbf{v}\end{pmatrix}\nonumber\\
    &\lesssim\begin{pmatrix}1&N^{-1/2}\vee t\\N^{-1/2}\vee t&t\end{pmatrix},\label{eq:entrywiseBound1}
\end{align}
with probability $1-e^{-c\log^{2}N}$, where the inequality is to be understood entrywise.

Our next step is to show that the multi-resolvent bounds in \ref{def:multiLL} can be transferred from $X^{(i-1)}$ to $X^{(i)}$.
\begin{lemma}\label{lem:traceApproximation}
Let $i\geq1$ and $X^{(i-1)}\in\mc{M}_{N-i+1}(z,n,\epsilon)$. Then there is a constant $c>0$ such that
\begin{align}
    \mu_{i}\bigl(X^{(i)}\in\mc{M}_{N-i}(z,n/2-1,\epsilon)\bigr)&\geq1-e^{-c\log^{2}N}.\label{eq:traceApproximation}
\end{align}
\end{lemma}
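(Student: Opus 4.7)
The strategy is to relate resolvents of $W^{(i)}_z$ to those of $W^{(i-1)}_z$ via a Schur complement identity with respect to the direction $V:=1_{2}\otimes\mbf{v}_{i}$, then to control the resulting quadratic forms in $\mbf{v}_{i}$ by Lemma \ref{lem:conc} (suitably extended). Condition (a) for $X^{(i)}$ is immediate: $X^{(i)}$ arises from $X^{(i-1)}$ by unitary conjugation followed by deletion of a row and column, so $\|X^{(i)}\|\leq\|X^{(i-1)}\|$.

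Writing $R_{i}=(\mbf{v}_{i},Q_{i})$ and $U:=1_{2}\otimes Q_{i}$, one checks that $W^{(i)}_{z}$ appears as the $(U,U)$-block of $(1_{2}\otimes R_{i})^{*}W^{(i-1)}_{z}(1_{2}\otimes R_{i})$. Block inversion then yields
\begin{align*}
    G^{(i)}_{z}(w) &= U^{*}G^{(i-1)}_{z}(w)U + U^{*}G^{(i-1)}_{z}(w)V\bigl(V^{*}G^{(i-1)}_{z}(w)V\bigr)^{-1}V^{*}G^{(i-1)}_{z}(w)U.
\end{align*}
I would substitute this identity into each occurrence of $G^{(i)}$ in the trace $\Tr{G^{(i)}_{1}B_{1}\cdots G^{(i)}_{k}B_{k}}$. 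After expansion, the result is a sum of terms, each a product of (i) a multi-resolvent trace of $X^{(i-1)}$ with at most $2k$ resolvents, (ii) scalar quadratic forms $\mbf{v}_{i}^{*}G^{(i-1)}_{z}(w'_{1})B'_{1}\cdots G^{(i-1)}_{z}(w'_{j})\mbf{v}_{i}$, and (iii) entries of the $2\times 2$ matrix $\bigl(V^{*}G^{(i-1)}_{z}(w')V\bigr)^{-1}$. Pieces of type (i) are controlled by \ref{def:multiLL} applied to $X^{(i-1)}$ as long as $2k\leq 2n$; pieces of type (iii) are controlled entrywise by \eqref{eq:entrywiseBound1} with very high $\mu_{i}$-probability; pieces of type (ii) are handled by extending Lemma \ref{lem:conc} to longer resolvent chains via the moment generating function bound \eqref{eq:MGFBound2}. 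The key observation is that the trace $\Tr{(H_{z}B)^{2}}$ that controls the variance in \eqref{eq:MGFBound2} is itself a multi-resolvent trace of $X^{(i-1)}$ and is therefore bounded by \ref{def:multiLL}. Tracking $\eta$-powers through the expansion recovers the required bound $C\eta^{-k/2+1}$ for $X^{(i)}$. Conditions (b), (c), (d) arise as low-$k$ instances of the same computation; their lower bounds follow from concentration of the relevant quadratic form around a quantity bounded below by the analogous condition for $X^{(i-1)}$, together with \eqref{eq:detApprox1}.

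The main technical obstacle is the bookkeeping in the Schur expansion: one must verify that each scalar factor appearing consumes the available $\eta$-powers correctly and that the counting of resolvents matches the claimed regime. The precise loss $n\mapsto n/2-1$ reflects that each application of the Schur identity can double the chain length of a given term, and that the self-energy itself introduces two additional resolvents. A final union bound over the finitely many choices of $B_{j}\in\{F,F^{*}\}$ and over a polynomial mesh of $w_{j}\in\mc{D}(\delta,\epsilon)$, together with the Lipschitz continuity in $w_{j}$ afforded by condition (a), upgrades these pointwise estimates to the uniform estimates required to conclude $X^{(i)}\in\mc{M}_{N-i}(z,n/2-1,\epsilon)$.
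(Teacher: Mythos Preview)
Your strategy---Schur complement plus concentration of quadratic forms in $\mbf{v}_i$---is exactly the paper's approach. A few corrections and comparisons are in order.

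First, the sign in your Schur identity is wrong: the correct formula is
\[
G^{(i)}_{z}(w) = U^{*}G^{(i-1)}_{z}(w)U \;-\; U^{*}G^{(i-1)}_{z}(w)V\bigl(V^{*}G^{(i-1)}_{z}(w)V\bigr)^{-1}V^{*}G^{(i-1)}_{z}(w)U,
\]
as in the standard minor formula $(A^{(j)})^{-1}_{kl}=(A^{-1})_{kl}-(A^{-1})_{kj}[(A^{-1})_{jj}]^{-1}(A^{-1})_{jl}$. This does not affect any magnitude bounds.

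Second, your taxonomy of the expanded terms is inaccurate. After substituting the identity into $\Tr{G^{(i)}_{1}B_{1}\cdots G^{(i)}_{k}B_{k}}$ and expanding, you obtain the leading term $\Tr{G^{(i-1)}_{1}B_{1}\cdots G^{(i-1)}_{k}B_{k}}$ (a single $k$-resolvent trace) plus error terms that are \emph{traces of products of $2\times2$ matrices}, namely
\[
\Tr{(V^{*}G V)^{-1}\,V^{*}(G B)^{p_{1}}G V\;\cdots\;(V^{*}G V)^{-1}\,V^{*}(G B)^{p_{l}}G V},\qquad p_{1}+\cdots+p_{l}=k.
\]
No further ``multi-resolvent traces of $X^{(i-1)}$ with up to $2k$ resolvents'' appear; item (i) in your list is a phantom. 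The error terms are purely products of the $2\times2$ blocks $V^{*}(GB)^{p}GV$ and $(V^{*}GV)^{-1}$.

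Third, and most substantively, the $\eta$-power counting you wave at is the heart of the argument. The paper makes it precise in a separate lemma (Lemma~\ref{lem:entrywiseBound2}) giving the \emph{block-by-block} entrywise bound
\[
V^{*}G_{1}F_{1}\cdots G_{p}F_{p}G_{p+1}V\;\lesssim\;\begin{pmatrix}t^{-p/2}&t^{-(p+1)/2}\\t^{-(p+1)/2}&t^{-p/2-1}\end{pmatrix}
\]
with high $\mu_{i}$-probability, proved exactly as you describe via the MGF bound \eqref{eq:MGFBound2} and \ref{def:multiLL}. One then multiplies these $2\times2$ matrices against the entrywise bound \eqref{eq:entrywiseBound1} for $(V^{*}GV)^{-1}$ and checks the product telescopes to $O(t^{-k/2})$, giving an $O\bigl((Nt^{k/2})^{-1}\bigr)$ correction to the trace. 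This anisotropic $2\times2$ structure---different $\eta$-weights in different blocks, compensated by the matching anisotropy of $(V^{*}GV)^{-1}$---is what makes the counting close; treating all entries uniformly loses a power of $\eta$.

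Your reading of the loss $n\mapsto n/2-1$ is essentially right: the MGF variance $\Tr{(H_{z}B)^{2}}$ for $B$ a block of $(GF)^{p}G$ is a trace with $2p+4$ factors of $F$, so $p\leq n-2$ forces the $2n$-resolvent hypothesis on $X^{(i-1)}$, and $X^{(i)}$ inherits bounds for $k\leq n-2=2(n/2-1)$.
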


Without loss we fix $i=1$. To prepare for the proof of Lemma \ref{lem:traceApproximation}, we note the following identity:
\begin{align}
    R(\mbf{v})\begin{pmatrix}0&0\\0&G^{(1)}_{z}\end{pmatrix}R(\mbf{v})&=G_{z}-G_{z}V(V^{*}G_{z}V)^{-1}V^{*}G_{z},\label{eq:minor}
\end{align}
where $R(\mbf{v})$ is the Householder matrix with first column $\mbf{v}$. If we replace each $G^{(1)}_{z}$ with the right hand side of \eqref{eq:minor}, then we have
\begin{align}
    \Tr{(G^{(1)}_{z}F^{(1)})^{m}}&=\Tr{(G_{z}F)^{m}}+\text{error},
\end{align}
where the error consists of terms of the form
\begin{align}
    \Tr{(V^{*}G_{z}V)^{-1}V^{*}(G_{z}F)^{p_{1}}GV\cdots(V^{*}G_{z}V)^{-1}V^{*}(G_{z}F)^{p_{l}}GV},
\end{align}
for $l=1,...,m$, $p_{j}\in\mbb{N}_{0}$ and $p_{1}+\cdots+p_{l}=m$. We therefore need bounds on the quadratic forms $\mbf{v}^{*}B\mbf{v}$ where $B$ is an $N\times N$ block of $(G_{z}F)^{p}G$.
\begin{lemma}\label{lem:entrywiseBound2}
Let $X\in\mc{M}_{N}(z,n,\epsilon)$, $p\in[n-2]$, $F_{j}\in\{F,F^{*}\}$ and $\eta_{j}\simeq t$ for $j\in[p]$. Then there is a constant $c>0$ such that
\begin{align}
    V^{*}G_{1}F_{1}\cdots G_{p}F_{p}G_{p+1}V&\lesssim\begin{pmatrix}t^{-p/2}&t^{-(p+1)/2}\\t^{-(p+1)/2}&t^{-p/2-1}\end{pmatrix},\label{eq:entrywiseBound2}
\end{align}
with probability $1-e^{-c\log^{2}N}$, where $G_{j}\equiv G_{z}(i\eta_{j})$ and the inequality holds entrywise.
\end{lemma}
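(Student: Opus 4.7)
The overall approach follows the template of Lemma \ref{lem:conc}: each entry of $V^{*}AV$ with $A := G_{1}F_{1}\cdots G_{p}F_{p}G_{p+1}$ is a quadratic form $(V^{*}AV)_{\alpha\beta}=\mbf{v}^{*}A_{\alpha\beta}\mbf{v}$ in $\mbf{v}\sim\mu^{(1)}$, where $A_{\alpha\beta}\in\mbb{C}^{N\times N}$ is the $(\alpha,\beta)\in\{1,2\}^{2}$ block of $A$. The plan is to decompose $A_{\alpha\beta}$ into Hermitian and anti-Hermitian parts and apply the moment-generating-function bound \eqref{eq:MGFBound2} to each; the mean $t\Tr{H_{z}A_{\alpha\beta}}$ and the sub-Gaussian parameter $t^{2}\Tr{(H_{z}A_{\alpha\beta})^{2}}/N$ will then be controlled using condition \ref{def:multiLL}.

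To bound the mean I would rewrite the $N$-dimensional trace $\Tr{H_{z}A_{\alpha\beta}}$ as a $2N$-dimensional trace of a resolvent chain, using the identity $H_{z}=-i(G_{z}(i\eta_{z}))_{22}/\eta_{z}$ together with the block-extraction formulas $FGF^{*}=\text{diag}((G)_{22},0)$ and $F^{*}GF=\text{diag}(0,(G)_{11})$. This produces a chain of $p+2$ resolvents ($G_{z}(i\eta_{z})$ together with $G_{1},\ldots,G_{p+1}$) interleaved with the original $F_{j}$'s and a pair of extraction matrices in $\{F,F^{*}\}$. Condition \ref{def:multiLL} applied with $k=p+2$ gives $|\Tr{\cdots}|\lesssim \eta^{-p/2}$, which after multiplying by $t/\eta\simeq 1$ yields the desired $O(t^{-p/2})$ mean bound for the $(1,1)$ entry. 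The same conversion applied to the square $\Tr{(H_{z}A_{\alpha\beta})^{2}}$ produces a chain of $2(p+2)$ resolvents, and \ref{def:multiLL} with $k=2p+4\leq 2n$ (legal because $p\leq n-2$) yields $\lesssim \eta^{-p-1}$, so that $t^{2}\Tr{(H_{z}A_{\alpha\beta})^{2}}/N\lesssim 1/(Nt^{p-1})$. The Chernoff-type argument from the proof of Lemma \ref{lem:conc} then gives a fluctuation of $O(\log N/(\sqrt{N}\,t^{(p-1)/2}))$ with probability $1-e^{-c\log^{2}N}$, which is negligible compared with the mean because $t\geq N^{-1+2\epsilon}$ forces $\log N\ll \sqrt{N/t}$.

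The four different entry-wise bounds arise from the different sizes of the first and last blocks of $A$: selecting the $(2,2)$ block of a resolvent picks up $(G)_{22}=i\eta H_{z}$, whose characteristic size $\sim 1/\sqrt{\eta}$ (from condition (d), which asserts $\eta^{3}H_{z}^{2}\simeq 1$) is larger by $t^{-1/2}$ than the $(1,1)$ block $i\eta\wt{H}_{z}$ controlled by (b)-(c). Each index equal to $2$ in $(\alpha,\beta)$ therefore promotes both the mean and the fluctuation bound by $t^{-1/2}$, giving the claimed matrix of bounds. The main obstacle is the careful bookkeeping of $\eta$-exponents and of the precise arrangement of $F,F^{*}$ matrices in the chains produced by the block-extraction identities: condition \ref{def:multiLL} requires the strict alternating pattern $G_{1}B_{1}G_{2}B_{2}\cdots$, so one must absorb occasional products $FF^{*}=\text{diag}(1,0)$ or $F^{*}F=\text{diag}(0,1)$ appearing between two resolvents via the resolution of the identity $1=FF^{*}+F^{*}F$ and split the resulting trace into separate chains of the admissible form. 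Aside from this bookkeeping, the proof is a direct transcription of the arguments in Lemma \ref{lem:conc}.
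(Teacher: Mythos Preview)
Your overall strategy is correct and coincides with the paper's: bound each block quadratic form via the moment-generating-function estimate \eqref{eq:MGFBound2}, lift the $N$-dimensional traces $t\Tr{H_{z}A_{\alpha\beta}}$ and $t^{2}\Tr{H_{z}A_{\alpha\beta}H_{z}A_{\alpha\beta}^{*}}$ to $2N$-dimensional resolvent chains, and control these by \ref{def:multiLL} with at most $2p+4\le 2n$ resolvents. The mean and fluctuation counts you give for the $(1,1)$ entry are exactly right, and the Chernoff step goes through as in Lemma~\ref{lem:conc}.

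The one genuine gap is your proposed handling of the projectors $FF^{*}$ and $F^{*}F$ that appear between resolvents after block extraction. The resolution of the identity $1=FF^{*}+F^{*}F$ does not convert such a chain into the form required by \ref{def:multiLL}: writing $G_{1}FF^{*}G_{2}=G_{1}G_{2}-G_{1}F^{*}FG_{2}$ just trades one projector for adjacent resolvents plus the other projector, and neither piece is of the admissible alternating type $G_{1}B_{1}G_{2}B_{2}\cdots$ with $B_{j}\in\{F,F^{*}\}$. The paper instead writes $FF^{*}=\frac{1}{2}(E_{+}+E_{-})$ and $F^{*}F=\frac{1}{2}(E_{+}-E_{-})$, passes $E_{-}$ through a resolvent via $E_{-}G_{z}(w)=-G_{z}(-w)E_{-}$, and then collapses the now-adjacent resolvents $G_{z}(w_{1})G_{z}(\pm w_{2})$ to a single resolvent by the resolvent identity (or a contour integral), paying one factor of $\eta^{-1}$ per projector. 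This mechanism, not the relative sizes of the diagonal blocks of a single $G_{z}$, is what produces the extra $\eta^{-1/2}$ for each index equal to $2$: the $(1,1)$ extraction inserts two genuine $F$-type matrices and yields a clean $(p+2)$-chain, whereas $(1,2)$ and $(2,2)$ extractions insert one and two projectors respectively, each costing $\eta^{-1}$ (and removing one resolvent, which only recovers $\eta^{1/2}$ from \ref{def:multiLL}). Once you replace your resolution-of-identity step by this $E_{\pm}$/resolvent-identity reduction, the rest of your argument goes through.
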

\begin{proof}
For ease of notation we assume that $F_{j}=F,\,j=1,...,p$; the general case follows in exactly the same way. Observe that for a block matrix
\begin{align*}
    B&=\begin{pmatrix}B_{11}&B_{12}\\B_{21}&B_{22}\end{pmatrix},
\end{align*}
with $B_{ij}\in\mbb{C}^{N\times N}$, we have
\begin{align*}
    t\Tr{B_{11}H_{z}}&=\frac{2t}{\eta_{z}}\Tr{BF\Im G_{z}F^{*}},\\
    t\Tr{B_{12}H_{z}}&=\frac{2t}{\eta_{z}}\Tr{B\Im G_{z}F^{*}},\\
    t\Tr{B_{21}H_{z}}&=\frac{2t}{\eta_{z}}\Tr{BF\Im G_{z}},\\
    t\Tr{B_{22}H_{z}}&=\frac{t}{2\eta_{z}}\Tr{B(E_{+}-E_{-})\Im G_{z}(E_{+}-E_{-})}.
\end{align*}
Using the identity
\begin{align*}
    E_{-}G_{z}(w)&=-E_{-}G_{z}(-w)
\end{align*}
and a combination of contour integration and the resolvent identity, when $B=(G_{z}F)^{p}G$ we can reduce all traces to the form in \ref{def:multiLL}. In doing so, each contour integral brings a factor of $\eta^{-1}$. Thus we see that the expectations of the quadratic forms are approximately
\begin{align*}
    \frac{t}{\eta_{z}}\Tr{(G_{z}F)^{p}G_{z}F\Im G_{z}F^{*}}&=O\left(\frac{1}{t^{p/2}}\right),\\
    \frac{t}{\eta_{z}}\Tr{(G_{z}F)^{p}G_{z}\Im G_{z}F^{*}}&=O\left(\frac{1}{t^{(p+1)/2}}\right),\\
    \frac{t}{\eta_{z}}\Tr{(G_{z}F)^{p}G_{z}F\Im G_{z}}&=O\left(\frac{1}{t^{(p+1)/2}}\right),\\
    \frac{t}{\eta_{z}}\Tr{(G_{z}F)^{p}G_{z}(E_{+}-E_{-})\Im G_{z}(E_{+}-E_{-})}&=O\left(\frac{1}{t^{p/2+1}}\right).
\end{align*}
It remains to show that the quadratic forms concentrate, but this follows by bounding the moment generating function as done in Lemma \ref{lem:conc} using the identities
\begin{align*}
    t^{2}\Tr{B_{11}H_{z}B_{11}^{*}H_{z}}&=\frac{2t^{2}}{\eta_{z}^{2}}\Tr{B F\Im G_{z}F^{*}B^{*}F\Im G_{z}F^{*}},\\
    t^{2}\Tr{B_{12}H_{z}B_{12}^{*}H_{z}}&=\frac{t^{2}}{\eta_{z}^{2}}\Tr{B\Im G_{z}(E_{+}-E_{-})B^{*}F\Im G_{z}F^{*}},\\
    t^{2}\Tr{B_{21}H_{z}B_{21}^{*}H_{z}}&=\frac{t^{2}}{\eta_{z}^{2}}\Tr{B^{*}\Im G_{z}(E_{+}-E_{-})BF\Im G_{z}F^{*}},\\
    t^{2}\Tr{B_{22}H_{z}B_{22}^{*}H_{z}}&=\frac{t^{2}}{2\eta_{z}^{2}}\Tr{B(E_{+}-E_{-})\Im G_{z}B^{*}(E_{+}-E_{-})\Im G_{z}}.
\end{align*}
With $B=(GF)^{p}G$, using \ref{def:multiLL} we find
\begin{align*}
    t^{2}\Tr{B_{11}H_{z}B_{11}^{*}H_{z}}&\lesssim\frac{1}{t^{p+1}},\\
    t^{2}\Tr{B_{12}H_{z}B_{12}^{*}H_{z}}&\lesssim\frac{1}{t^{p+2}},\\
    t^{2}\Tr{B_{21}H_{z}B_{21}^{*}H_{z}}&\lesssim\frac{1}{t^{p+2}},\\
    t^{2}\Tr{B_{22}H_{z}B_{22}^{*}H_{z}}&\lesssim\frac{1}{t^{p+3}}.
\end{align*}
We now proceed using the bound on the moment generating function in \eqref{eq:MGFBound2} and Markov's inequality. In this step we must approximate traces involving at most $2p+4\leq 2n$ factors of $F$.
\end{proof}

We can now prove Lemma \ref{lem:traceApproximation}.
\begin{proof}[Proof of Lemma \ref{lem:traceApproximation}]
Let 
\begin{align*}
    F^{(1)}&=\begin{pmatrix}0&1_{N-1}\\0&0\end{pmatrix},
\end{align*}
and $F^{(1)}_{j}\in\{F,F^{*}\}$. Using the entrywise bounds in \eqref{eq:entrywiseBound1} and \eqref{eq:entrywiseBound2}, for $p\in[n-2]$ we have
\begin{align*}
    &N\bigl|\Tr{G^{(1)}_{1}F^{(1)}_{1}\cdots G^{(1)}_{p}F^{(1)}_{p}}-\Tr{G_{1}F_{1}\cdots G_{p}F_{p}}\bigr|\\
    &\lesssim\sum_{l=1}^{p}\sum_{p_{1}+\cdots+p_{l}=p}\tr\prod_{j=1}^{l}\begin{pmatrix}1&N^{-1/2}\vee t\\N^{-1/2}\vee t&t\end{pmatrix}\begin{pmatrix}t^{-p_{j}/2}&t^{-(p_{j}+1)/2}\\t^{-(p_{j}+1)/2}&t^{-p_{j}/2-1}\end{pmatrix}\\
    &\lesssim\sum_{l=1}^{p}\sum_{p_{1}+\cdots+p_{l}=p}\tr\begin{pmatrix}t^{-(p_{1}+\cdots+p_{l})/2}&t^{-(p_{1}+\cdots+p_{l}+1)/2}\\t^{-(p_{1}+\cdots+p_{l}-1)/2}&t^{-(p_{1}+\cdots+p_{l})/2}\end{pmatrix}\\
    &\lesssim \frac{1}{t^{p/2}},
\end{align*}
with probability $1-e^{-c\log^{2}N}$. The second inequality follows from the equality
\begin{align*}
    t^{-p/2}+(N^{-1/2}\vee t)t^{-(p+1)/2}&=t^{-p/2}\left(1+\frac{1}{\sqrt{Nt}}\vee \sqrt{t}\right).
\end{align*}
Note that the factor of $N$ comes from the normalised trace. We therefore have
\begin{align*}
    \Tr{G^{(1)}_{1}F^{(1)}_{1}\cdots G^{(1)}_{p}F^{(1)}_{p}}&=\Tr{G_{1}F_{1}\cdots G_{p}F_{p}}+O\left(\frac{1}{Nt^{p/2}}\right),
\end{align*}
for $p\in[n-2]$ and $\Re w_{j}=0,\,|\Im w_{j}|\simeq t$. The arguments in the proofs of Lemmas \ref{lem:conc} and \ref{lem:entrywiseBound2} can be easily extended to $|\Im w_{j}|\geq N^{-1+\epsilon}$ and $|\Re w_{j}|\leq\delta|\Im w_{j}|$ for some small $\delta>0$, from which it follows that $X^{(1)}\in\mc{M}_{N}(z,n/2-1,\epsilon)$.
\end{proof}

In the second step, we use \ref{def:multiLL} to approximate the following determinant:
\begin{align}
    D&:=\det\left[1-\frac{1}{\sqrt{N}}\begin{pmatrix}G_{z}(i\eta_{1})F_{11}&\cdots&G_{z}(i\eta_{1})F_{1k}\\\vdots&\ddots&\vdots\\G_{z}(i\eta_{k})F_{k1}&\cdots &G_{z}(i\eta_{k})F_{kk}\end{pmatrix}\right],
\end{align}
where $F_{jl}\in\{F,F^{*}\}$.
\begin{lemma}\label{lem:detApprox2}
Fix $z\in\mbb{D}$, $n\in\mbb{N}$ and $\epsilon>0$ such that $n\epsilon>3$ and let $X\in\mc{M}_{N}(z,n,\epsilon)$. Then for any choice of $F_{jl}\in\{F,F^{*}\},\,j,l\in[k]$, we have
\begin{align}
    D&=\left[1+O(N^{(3-n\epsilon)/2})\right]\exp\left\{-\sqrt{N}\sum_{j=1}^{k}\Tr{G_{z}(i\eta_{j})F_{jj}}-\frac{1}{2}\sum_{j,l=1}^{k}\Tr{G_{z}(i\eta_{j})F_{jl}G_{z}(i\eta_{l})F_{lj}}\right\}.
\end{align}
\end{lemma}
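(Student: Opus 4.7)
The plan is to pass to logarithms and Taylor expand. Set $A$ to be the $2Nk\times 2Nk$ block matrix with $(j,l)$-block $N^{-1/2} G_{z}(i\eta_{j}) F_{jl}$, so that $D = \det(1 - A)$. The formal identity
\begin{align*}
    \log D = -\sum_{m=1}^{\infty} \frac{\tr(A^m)}{m}
\end{align*}
produces the two terms in the claimed exponent from $m = 1$ and $m = 2$ (after converting traces of the $2N \times 2N$ blocks to the normalized $\Tr$); the task is to show that the sum of the remaining terms is $O(N^{(3-n\epsilon)/2})$.

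For the intermediate range $3 \le m \le n$, I would expand
\begin{align*}
    \tr(A^m) = N^{-m/2}\sum_{j_1,\ldots,j_m\in[k]} \tr\bigl(G_{z}(i\eta_{j_1}) F_{j_1 j_2} \cdots G_{z}(i\eta_{j_m}) F_{j_m j_1}\bigr)
\end{align*}
into $k^m$ summands, each a trace of a $2N \times 2N$ matrix bounded by $2NC/\eta^{m/2-1}$ via condition \ref{def:multiLL}. With $\eta \ge N^{-1+\epsilon}$ this yields $|\tr(A^m)|/m \le C_{k,m} N^{-\epsilon(m-2)/2}$. The truncation-level term $m = n$ contributes $O(N^{(3-n\epsilon)/2})$, and once $n\epsilon > 3$ the remaining terms in this range can be absorbed into the same order.

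For the tail $m > n$, the operator norm $\|A\| \le k/(N^{1/2}\eta)$ may exceed $1$, so termwise convergence of the Taylor series fails. Here I would use the global Cauchy--Schwarz bound $\|A^n\|^2 \le \tr(A^n A^{*n})$ combined with condition \ref{def:multiLL} (with parameter $2n$, after rewriting the $FF^*$ and $F^*F$ junctions as linear combinations in $\mbb{H}$ and using the symmetry identity $E_- G_z(w) = -E_- G_z(-w)$ to bring the resulting products into the form covered by the condition), which gives $\|A^n\| \le C_n N^{-1/2}(N\eta)^{-(n-1)/2} \ll 1$ once $n\epsilon > 3$. Writing $m = qn + r$ with $0 \le r < n$ and combining with the crude bound $\|A\| \le k/(N^{1/2}\eta)$ for the leftover factors, the tail converges geometrically and is bounded by $O(N^{(3-n\epsilon)/2})$.

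The main technical obstacle is making the Taylor identity rigorous when $\|A\|$ exceeds $1$. I would handle this through the integral representation $\log D = -\int_0^1 \tr(A(1-sA)^{-1})\,ds$ together with the Neumann expansion $(1-sA)^{-1} = \sum_{m=0}^{n-1}(sA)^m + (sA)^n(1-sA)^{-1}$: integrating the polynomial part reproduces the truncated Taylor sum, while the remainder is controlled by the operator-norm estimate on $A^n$ together with a uniform bound on $\|(1-sA)^{-1}\|$ for $s \in [0,1]$, obtained by the same tail argument applied at each $s$. Exponentiating the resulting identity then gives the claim.
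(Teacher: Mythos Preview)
Your approach is essentially the paper's: pass to $\log D$, Taylor expand, bound the low-order terms $3\le m\le n$ directly by condition~(e), and control the tail via the Hilbert--Schmidt estimate $\|A^n\|^2\le\tr(A^n A^{*n})$ together with condition~(e) after rewriting the $FF^*$, $F^*F$ junctions in $\mbb{H}$. The paper carries out exactly these steps (its displayed bound is $N^{-n/2}\|(G_zF)^n\|\lesssim N^{1/2}/(N\eta)^{n/2}$; your exponent $N^{-1/2}(N\eta)^{-(n-1)/2}$ is off by a factor $(\eta/N)^{1/2}$ once the junctions are handled correctly, but this does not affect the argument).

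One simplification worth noting: you do not need the integral representation. The bound $\|A^n\|<1$ already forces the spectral radius of $A$ below $1$ (any eigenvalue $\lambda$ satisfies $|\lambda|^n\le\|A^n\|$), so the series $\sum_{m\ge1}\tr(A^m)/m$ converges absolutely and you may bound the tail term by term: for $n\le m\le 2n$ use condition~(e) directly, and for $m>2n$ factor out copies of $A^n$. This is what the paper does, and it avoids the uniform bound on $\|(1-sA)^{-1}\|$, which with $\|A\|\sim\eta^{-1/2}$ would cost you an extra power of $N^{(1-\epsilon)/2}$ and spoil the stated error.
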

\begin{proof}
First, observe that by \ref{def:multiLL} we have
\begin{align*}
    \|(G_{z}F)^{n}\|^{2}&=\|(G_{z}F)^{n}(F^{*}G^{*}_{z})^{n}\|\\
    &\leq\tr G_{z}F\cdots G_{z}F\cdot F^{*}G^{*}_{z}\cdots F^{*}G_{z}^{*}\\
    &\leq\tr\frac{\Im G_{z}}{\eta}F(G_{z}F)^{n-2}\frac{\Im G_{z}}{\eta}F^{*}(G^{*}_{z}F^{*})^{n-2}\\
    &\lesssim\frac{N}{\eta^{n}},
\end{align*}
and so
\begin{align*}
    N^{-n/2}\|(G_{z}F)^{n}\|&\lesssim\frac{N^{1/2}}{(N\eta)^{n/2}}\lesssim N^{\frac{1-\epsilon n}{2}}.
\end{align*}
If $n\epsilon>1$ then $1-N^{-1/2}G_{z}F$ is invertible and equal to
\begin{align*}
    (1-N^{-1/2}G_{z}F)^{-1}&=\sum_{m=0}^{n-1}N^{-m/2}(G_{z}F)^{m}+N^{-n/2}(G_{z}F)^{n}(1-N^{-1/2}G_{z}F)^{-1}\\
    &=\left(1-N^{-n/2}(G_{z}F)^{n}\right)^{-1}\sum_{m=0}^{n-1}N^{-m/2}(G_{z}F)^{m}.
\end{align*}
Therefore we can make a Taylor expansion of the determinant $D$ and truncate at order $n$ using $|\tr A|\leq N\|A\|$:
\begin{align*}
    D&=\exp\left\{-\sum_{m=0}^{n-1}\frac{1}{N^{m/2}}\tr\begin{pmatrix}G_{z}(i\eta_{1})F_{11}&\cdots&G_{z}(i\eta_{1})F_{1k}\\\vdots&\ddots&\vdots\\G_{z}(i\eta_{k})F_{k1}&\cdots &G_{z}(i\eta_{k})F_{kk}\end{pmatrix}^{n}+O(N^{(3-n\epsilon)/2})\right\}.
\end{align*}
Estimating each term by \ref{def:multiLL}, we find that only the first two terms are non-negligible.
\end{proof}

Finally, in order to restrict to the events in Lemma \ref{lem:traceApproximation}, we need upper bounds on $K_{i}$ and $F_{N-k}$ that are uniform in $\mbf{v}$.
\begin{lemma}\label{lem:FBound}
Let $z_{i}\in\mbb{D},\,i=1,...,k$. We have
\begin{align}
    K_{i}(z_{i})&\lesssim\sqrt{\frac{t^{3}}{N}}e^{\frac{N}{t}(\eta^{(i-1)}_{z_{i}})^{2}}\det^{-1}\bigl((\eta^{(i-1)}_{z_{i}})^{2}+|X^{(i-1)}_{z_{i}}|^{2}\bigr),
\end{align}
and
\begin{align}
    F_{N-k}(\mbf{z},X^{(k)})&\lesssim(Nt)^{k(k-1/2)}\prod_{i=1}^{k}e^{-\frac{N}{t}(\eta^{(i-1)}_{z_{i}})^{2}}\det\bigl((\eta^{(i-1)}_{z_{i}})^{2}+|X^{(i-1)}_{z_{i}}|^{2}\bigr),
\end{align}
uniformly in $(\mbf{v}_{1},...,\mbf{v}_{k})$.
\end{lemma}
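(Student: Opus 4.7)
The plan is to evaluate both $K_i(z_i)$ and $F_{N-k}(\mbf{z},X^{(k)})$ as Gaussian integrals around the saddle $\eta^{(i-1)}_{z_i}$, which by its very definition makes the first-order variation vanish. For $K_i$, I would first exploit $\|\mbf{v}\|=1$ on the sphere to rewrite
\[
\|X^{(i-1)}_{z_i}\mbf{v}\|^2 = \mbf{v}^*A\mbf{v} - (\eta^{(i-1)}_{z_i})^2,\qquad A := |X^{(i-1)}_{z_i}|^2 + (\eta^{(i-1)}_{z_i})^2,
\]
which extracts the exponential factor $e^{N(\eta^{(i-1)}_{z_i})^2/t}$ and leaves the spherical integral $\int_{S^{N-i}} e^{-(N/t)\mbf{v}^*A\mbf{v}}\diff_H\mbf{v}$. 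Next, I would represent the sphere constraint via the inverse Laplace identity $\delta(\|\mbf{v}\|^2-1) = (2\pi i)^{-1}\int_{c-i\infty}^{c+i\infty} e^{\mu(\|\mbf{v}\|^2-1)}\diff\mu$ (with $c < (N/t)\lambda_{\min}(A)$) so that the $\mbf{v}$-integral becomes an unconstrained Gaussian on $\mathbb{C}^{N-i+1}$, evaluating to $\pi^{N-i+1}/\det(\mu-(N/t)A)$. What remains is a one-dimensional contour integral which I would estimate by steepest descent: the saddle equation $\textup{tr}\,((N/t)A-\mu)^{-1}=1$ is solved exactly at $\mu=0$ by the defining equation $t\Tr{H^{(i-1)}_{z_i}(i\eta^{(i-1)}_{z_i})}=1$, and the Hessian at the saddle is proportional to $\textup{tr}\,A^{-2}$, which by condition (d) of Definition \ref{def2} is $\asymp N/t^3$; the resulting Gaussian correction $\propto 1/\sqrt{\textup{tr}\,A^{-2}}$ then produces the $\sqrt{t^3/N}$ factor, with the $\det^{-1}$ piece coming from the Gaussian determinant at the saddle.

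For $F_{N-k}$, I would apply a parallel strategy. Writing
\[
|\det(X^{(k)}_{z_j} + \tau Y_{N-k})|^2 = \det\bigl((X^{(k)}_{z_j}+\tau Y_{N-k})(X^{(k)}_{z_j}+\tau Y_{N-k})^*\bigr)
\]
and expressing each determinant as a Gaussian (or Grassmann) integral over auxiliary vectors, the expectation over $Y_{N-k}\sim \text{GinUE}(N-k)$ can be performed explicitly, contracting the full expression to an effective integral of dimension $k$. A saddle-point analysis places the stationary point at $\eta^{(i-1)}_{z_i}$ for each $i$, giving the exponential pieces $e^{-N(\eta^{(i-1)}_{z_i})^2/t}$ and the determinants $\det((\eta^{(i-1)}_{z_i})^2+|X^{(i-1)}_{z_i}|^2)$ at the saddle value after undoing the Householder reflections $X^{(i)} = Q_i^*X^{(i-1)}Q_i$ to rewrite the factors in terms of the intermediate matrices. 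The $k(k-1/2)$ real Gaussian directions around the saddle produce the prefactor $(Nt)^{k(k-1/2)}$.

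The main technical obstacle, common to both bounds, is converting the saddle asymptotics into genuine \emph{upper} bounds holding uniformly in $(\mbf{v}_1,\ldots,\mbf{v}_k)$. This requires estimating the remainder of the Taylor expansion of the log-integrand along the steepest-descent contour, which is done using the higher-order spectral controls coming from conditions (c) and (d) of Definition \ref{def2}. Uniformity in $\mbf{v}$ is preserved because all the estimates depend only on the spectral invariants of $A$ (namely $\textup{tr}\,A^{-1}$ and $\textup{tr}\,A^{-2}$), which are controlled by Definition \ref{def2} irrespective of $\mbf{v}_1,\ldots,\mbf{v}_{i-1}$.
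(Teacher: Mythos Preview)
Your treatment of $K_i$ is essentially the paper's proof: the integral representation you derive via the delta-function identity is precisely \cite[Lemma 7.1]{maltsev_bulk_2024}, and the paper bounds the resulting one-dimensional integral by the same Hessian estimate $\Tr{H^{2}}\gtrsim t^{-3}$. One small correction: this lower bound is stated in Definition~\ref{def2} only for $X=X^{(0)}$, not for $X^{(i-1)}$; the paper transfers it to the minors by Cauchy interlacing, which gives $\Tr{(H^{(i-1)})^{2}}=\Tr{H^{2}}+O((Nt^{4})^{-1})$ uniformly in $\mbf{v}_{1},\ldots,\mbf{v}_{i-1}$. You should invoke interlacing explicitly rather than asserting that the spectral invariants are ``controlled by Definition~\ref{def2} irrespective of $\mbf{v}$''.

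For $F_{N-k}$ there is a genuine gap. The supersymmetric/Grassmann step does produce an integral over $Q\in\mbb{C}^{k\times k}$ (so $k^{2}$ complex variables, not $k$), namely \cite[Lemma 6.1]{maltsev_bulk_2024}. But a direct saddle-point analysis of $\int e^{-\frac{N}{t}\tr|Q|^{2}}D(Q)\diff Q$ does not obviously yield a uniform \emph{upper} bound: $D(Q)$ is a $2k(N-k)\times 2k(N-k)$ determinant coupling all entries of $Q$, its saddle is a nontrivial matrix, and establishing a global quadratic lower bound on the phase in $2k^{2}$ real directions is exactly the hard part you flag but do not resolve. The paper sidesteps this entirely: it applies Fischer's inequality to the block structure of $D(Q)$ to get
\[
|D(Q)|\leq\prod_{i=1}^{k}\det^{1/2}\bigl((QQ^{*})_{ii}+|X^{(k)}_{z_{i}}|^{2}\bigr)\det^{1/2}\bigl((Q^{*}Q)_{ii}+|X^{(k)}_{z_{i}}|^{2}\bigr),
\]
then Cauchy--Schwarz and spherical coordinates for the columns of $Q$ reduce everything to a product of $k$ one-dimensional integrals $\int_{0}^{\infty}x^{k-1}e^{-N\phi_{i}(x)}\diff x$ with $\phi_{i}(x)=x/t-\Tr{\log(x+|X^{(k)}_{z_{i}}|^{2})}$. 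For these scalar integrals one can prove the global bound $\phi_{i}(x)-\phi_{i}(x_{*})\gtrsim(x-x_{*})^{2}/t^{3}$ uniformly in $x>0$ (again via interlacing), and Laplace's method then gives a genuine upper bound. Your proposal to ``undo the Householder reflections'' is not needed and would not help; the determinants naturally involve $X^{(k)}$, and the passage to $X^{(i-1)}$ in the final statement is once more just interlacing.
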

Deferring the proof to the appendix, we can now conclude the proof of Theorem \ref{thm2}.
\begin{proof}[Proof of Theorem \ref{thm2}]
We start from the formula for $\rho^{(k)}_{N}$ in \eqref{eq:rho(k)}. Let
\begin{align*}
    n_{0}&=n_{\epsilon,k},\\
    n_{i}&=\frac{n_{i-1}}{2}-1,\qquad i=1,...,k,
\end{align*}
where $n_{\epsilon,k}$ is defined by \eqref{eq:n_eps}. Define the events
\begin{align*}
    \mc{E}_{i}&:=\{X^{(i)}\in\mc{M}_{N-i}(z,n_{i},\epsilon)\},\qquad i=1,...,k.
\end{align*}
By Lemma \ref{lem:traceApproximation}, we have 
\begin{align*}
    \mu_{i}(\mc{E}_{i})&\geq1-e^{-c\log^{2}N}.
\end{align*}
Using the bounds on $K_{i}$ and $F_{N-k}$ from Lemma \ref{lem:FBound}, we have
\begin{align*}
    K_{1}(z_{1})\mbb{E}_{1}\Bigl[\cdots K_{k}(z_{k})\mbb{E}_{k}\Bigl[\bigl(1-\prod_{i=1}^{k}1_{\mc{E}_{i}}\bigr)F_{N-k}(\mbf{z},X^{(k)})\Bigr]\cdots\Bigr]&\lesssim e^{-c\log^{2}N}.
\end{align*}
Working on $\mc{E}_{1}\cap\cdots\cap\mc{E}_{k}$, we can repeat the asymptotic analysis in \cite{maltsev_bulk_2024} with the following changes: in place of \cite[Lemma 3.5]{maltsev_bulk_2024} we use \ref{def:multiLL}, in place of \cite[Eq. (6.21)]{maltsev_bulk_2024} we use Lemma \ref{lem:detApprox2} and in place of \cite[Lemma 7.2]{maltsev_bulk_2024} we use Lemma \ref{lem:conc}. By our choice of $n_{\epsilon,k}$ we have $n_{i}\epsilon>3$ for each $i=0,1,...,k$ which ensures that we can apply Lemma \ref{lem:detApprox2}.
\end{proof}

\section{Multi-resolvent Local Law}\label{sec3}
Let $\epsilon>0$, $q=N^{\epsilon}$ and $X\in\mc{S}_{N}(\epsilon)$, where we recall Definition \ref{def1} of $\mc{S}_{N}(\epsilon)$. Let 
\begin{align*}
    \kappa_{r,s}&=Nq^{r+s-2}r!s!\mbb{E}X_{ij}^{r}\bar{X}_{ij}^{s}
\end{align*}
denote the rescaled cumulants of $X_{ij}$. By definition we have $\kappa_{0,1}=\kappa_{0,2}=0$ and $\kappa_{1,1}=1$. We will use the notion of second order renormalisation (see \cite[Eq. (42)]{cipolloni_eigenstate_2021}),
\begin{align}
    \underline{zf(z,\bar{z})}&:=zf(z,\bar{z})-\mbb{E}|z|^{2}\mbb{E}\bar{\partial}_{z}f(z,\bar{z}),
\end{align}
where $\bar{\partial}_{z}=\partial/\partial\bar{z}$, Central to our arguments is the cumulant expansion (see e.g. \cite[Lemma 7.1]{he_mesoscopic_2017}). We will apply it in the situation where we have two polynomials $f$ and $g$ in entries of resolvents $G_{z}(w)$, one of which is renormalised:
\begin{align}
    \mbb{E}\underline{X_{ij}f}g&=\frac{1}{N}\mbb{E}f\bar{\partial}_{ij}g+\sum_{r+s=2}^{L}\frac{\kappa_{r+1,s}}{Nq^{r+s-1}}\mbb{E}\partial^{r}_{ij}\bar{\partial}^{s}_{ij}fg+O(N^{-D}),
\end{align}
where, for any $D>0$, the error $N^{-D}$ can be achieved by truncating the sum at some finite $L$ depending on $\epsilon$ (this is a straightforward consequence of the single resolvent local law in Theorem \ref{thm:singleLL} below and the fact that $q=N^{\epsilon}$).

We recall the following useful identity:
\begin{align}
    G_{z}(w)E_{-}&=-E_{-}G_{z}(-w).\label{eq:GE-}
\end{align}
We will use the convention that fraktur indices take values in $[2N]$ and non-fraktur indices in $[N]$. A hat on an index means addition of $N$ modulo $2N$, i.e. $\hat{\mathfrak{i}}=\mathfrak{i}+N\text{ mod }2N$.

Define
\begin{align}
    \hat{u}_{z}(w)&:=\frac{1}{|z|^{2}-(\Tr{G}+w)^{2}},\quad \hat{m}_{z}(w):=(\Tr{G}+w)\hat{u}_{z}(w),
\end{align}
and
\begin{align}
    \hat{M}_{z}(w)&:=\begin{pmatrix}\hat{m}_{z}(w)&-z\hat{u}_{z}(w)\\-\bar{z}\hat{u}_{z}(w)&\hat{m}_{z}(w)\end{pmatrix}.\label{eq:Mhat}
\end{align}
Then using the tautology $G_{z}(w)(W_{z}-w)=1$ we can derive
\begin{align}
    G_{z}(w)&=\hat{M}_{z}(w)-\hat{M}_{z}(w)\underline{WG_{z}(w)}\label{eq:self1}\\
    &=\hat{M}_{z}(w)-\underline{G_{z}(w)W}\hat{M}_{z}(w),\label{eq:self2}
\end{align}
We also have the following identities which extend the renormalisation to a product of resolvents:
\begin{align}
    \underline{WG_{1}}B_{1}\cdots G_{m+1}&=-\sum_{l=2}^{m+1}\sum_{\nu=\pm}\nu\Tr{G_{1}B_{1}\cdots G_{l}E_{\nu}}E_{\nu}G_{l}B_{l}\cdots G_{m+1}+\underline{WG_{1}B_{1}\cdots G_{m+1}},\label{eq:self3}\\
    G_{1}B_{1}\cdots\underline{G_{m+1}W}&=-\sum_{l=1}^{m}\sum_{\nu=\pm}\nu\Tr{G_{l}B_{l}\cdots G_{m+1}E_{\nu}}G_{1}B_{1}\cdots G_{l}E_{\nu}+\underline{G_{1}B_{1}\cdots G_{m+1}W}.\label{eq:self4}
\end{align}

Let
\begin{align}
    M_{z}(w)&=\begin{pmatrix}m_{z}(w)&-zu_{z}(w)\\-\bar{z}u_{z}(w)&m_{z}(w)\end{pmatrix},\label{eq:M_{z}single}
\end{align}
be the deterministic approximation to $G_{z}(w)$, i.e. $m_{z}$ is the unique solution of
\begin{align}
    -\frac{1}{m_{z}(w)}&=m_{z}(w)+w-\frac{|z|^{2}}{m_{z}(w)+w},\quad \Im w\cdot\Im m_{z}(w)>0,\label{eq:cubic}
\end{align}
and
\begin{align}
    u_{z}(w)&=\frac{m_{z}(w)}{m_{z}(w)+w}.\label{eq:u}
\end{align}
For $|z|<1$ we have the asymptotic
\begin{align}
    m_{z}(w)&=\text{sign}(\Im w)\cdot i\sqrt{1-|z|^{2}}+O(w)\label{eq:m},
\end{align}
and the bound
\begin{align}
    |m'_{z}(w)|&\lesssim\frac{1}{|1-|z|^{2}|+|\Im w|^{2/3}}.\label{eq:m'}
\end{align}
Let 
\begin{align}
    M_{z}(w_{1},B_{1},...,w_{k})\label{eq:M_{z}multi}
\end{align}
denote the deterministic approximation to resolvent chain
\begin{align*}
    G_{z}(w_{1}) B_{1}\cdots G_{z}(w_{k-1})B_{k-1}G_{z}(w_{k}).
\end{align*}
A recursive definition of $M_{z}$ can be found in \cite[Definition 4.1]{cipolloni_optimal_2024}, although the explicit definition will not be important for us.

Throughout this section we will state errors in terms of
\begin{align}
    \mc{E}^{av}_{\eta,q}&:=\frac{1}{N\eta}+\frac{1}{q},\label{eq:avError}\\
    \mc{E}^{iso}_{\eta,q}&:=\frac{1}{\sqrt{N\eta}}+\frac{1}{q}.\label{eq:isoError}
\end{align}
A basic input for the arguments below is the sparse single resolvent law from \cite{he_edge_2023}.
\begin{theorem}[Theorem 3.1 in \cite{he_edge_2023}]\label{thm:singleLL}
For any $\delta>0$ we have
\begin{align}
    |\Tr{G_{z}(w)-M_{z}(w)}|&\prec\mc{E}^{av}_{\eta,q},\label{eq:avSingleLL}\\
    \max_{\mathfrak{x},\mathfrak{y}\in[2N]}|(G_{z}(w))_{\mathfrak{x},\mathfrak{y}}-(M_{z}(w))_{\mathfrak{x},\mathfrak{y}}|&\prec\mc{E}^{iso}_{\eta,q},\label{eq:isoSingleLL}
\end{align}
uniformly in
\begin{align}
    \mbf{D}_{\delta}&:=\bigl\{(z,w)\in\mbb{C}^{2}:|z|\leq1-\delta,\,w=E+i\eta,\,|E|\leq\delta,\,N^{-1+\delta}\leq\eta\leq\delta^{-1}\bigr\}.
\end{align}
\end{theorem}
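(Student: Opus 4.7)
The plan is to adapt the standard cumulant-expansion approach for sparse Hermitian Wigner matrices to the Hermitised non-Hermitian setting, using the self-consistent equations \eqref{eq:self1}--\eqref{eq:self2} together with the truncated cumulant expansion.

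First, I would set up the approximate Dyson equation. Taking the normalised trace of \eqref{eq:self1} and applying the cumulant expansion to $\Tr{\hat{M}_{z}(w)\underline{WG_{z}(w)}}$, the $\kappa_{1,1}$-contribution is exactly cancelled by the underline renormalisation, so the expansion starts at order $r+s\geq 2$. The $(r,s)$-term carries a prefactor $\kappa_{r+1,s}/(Nq^{r+s-1})$, and after contracting the derivatives $\partial^{r}_{ij}\bar{\partial}^{s}_{ij}$ against the resolvent entries one uses a preliminary isotropic bound $|(G_{z})_{\mathfrak{x}\mathfrak{y}}|\prec 1$ to see that each such term contributes at most $q^{-(r+s-1)}$ to the average trace. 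Summing the geometric-type series in $r+s$ and truncating at some $L=L(\epsilon,D)$ yields a total error $1/q$ plus the usual fluctuation error $1/(N\eta)$, matching $\mathcal{E}^{av}_{\eta,q}$. The analogous computation with $\mathbf{x}^{*}(\cdot)\mathbf{y}$ replacing the trace produces the isotropic error $1/\sqrt{N\eta}+1/q$.

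Second, I would invoke stability of the vector Dyson equation solved by $(m_{z},u_{z})$ in \eqref{eq:cubic}--\eqref{eq:u}. The linearisation of the self-consistent equation at $M_{z}$ is governed by the operator $1-M_{z}\mathcal{S}[M_{z}\,\cdot\,]$, where $\mathcal{S}$ encodes the covariance of the entries in Hermitised form. For $|z|\leq 1-\delta$ and $w\in\mathbf{D}_{\delta}$ the bulk condition $\Im m_{z}(w)\simeq\sqrt{1-|z|^{2}}>0$ from \eqref{eq:m} guarantees that this stability operator is boundedly invertible uniformly in $N$, as in the dense case; inverting it on the expanded equation converts the algebraic error $\mathcal{E}^{av}_{\eta,q}$ into the claimed bound on $|\Tr{G_{z}-M_{z}}|$, and similarly for the isotropic law.

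Third, I would run the standard bootstrap in $\eta$. At $\eta\simeq 1$ the bounds follow from trivial norm estimates on the resolvent. Decreasing $\eta$ along a fine grid down to $N^{-1+\delta}$, at each step the a priori bound at the previous $\eta$ furnishes the isotropic input needed to estimate the $G$-entries arising from derivatives in the cumulant expansion of the next step; a continuity argument then upgrades the a priori bound to the improved bound $\mathcal{E}^{iso}_{\eta,q}$ at the new $\eta$. The main obstacle I anticipate is the book-keeping when many derivatives fall on the same entry $X_{ij}$: this generates products of several resolvent entries, some of which would be too large under the sparse a priori bound alone. The remedy, standard in this literature, is to iterate the cumulant expansion selectively — re-expanding any resolvent-entry factor that is not yet under control — until every term is either a product of $M_{z}$-entries (all of size $O(1)$) times a controlled error, or is suppressed by enough factors of $q^{-1}$ and $N^{-1}$ to be absorbed into $\mathcal{E}^{av/iso}_{\eta,q}$. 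Verifying that this iteration terminates at depth depending only on $\epsilon$ and $D$ is the quantitative heart of the argument.
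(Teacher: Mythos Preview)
Your proposal is correct in outline but takes a genuinely different route from the paper. The paper's argument (Appendix~\ref{app:singleLL}) does not run a direct self-consistent equation plus bootstrap in $\eta$; instead it takes He's weak local law with error $(N\eta)^{-1/6}+q^{-1/3}$ as a black-box input and upgrades it to the sharp error via the zigzag machinery of Sections~\ref{sec:zig}--\ref{sec:zag}. Concretely, the zig step runs the characteristic flow \eqref{eq:matrixOU}--\eqref{eq:dw} and uses the weak law only to control the forcing term $\Tr{G^{2}}S$ and the martingale quadratic variation in the SDE for $S^{av}_{t}$, yielding the sharp bound for the Gaussian-divisible matrix; the zag step then removes the Gaussian component by bounding $\partial_{t}\mathbb{E}|S|^{2p}$ through a cumulant expansion, with the fluctuation-averaging Lemma~\ref{lem:fluctuationAveraging} as an auxiliary input for the isotropic case. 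Your approach is the more classical Lee--Schnelli route: cumulant-expand the self-consistent equation directly, invert the stability operator (which is uniformly bounded on $\mbf{D}_{\delta}$ thanks to the bulk condition $|z|\leq1-\delta$), and bootstrap down in $\eta$. This should indeed work here---He's weaker exponent arises because his domain includes the edge $|z|\sim1$, where stability degenerates---and your identification of the iterated cumulant expansion as the mechanism for handling high-derivative terms is correct. The trade-off is that the paper's route is essentially free once the multi-resolvent zigzag has been set up (the single-resolvent case is a degenerate instance), whereas your route would require carrying out the recursive moment estimate and iteration separately for this case.
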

A consequence of this is that
\begin{align}
    \|\hat{M}_{z}(w)\|&\lesssim1,
\end{align}
with very high probability. Note that \cite[Theorem 3.1]{he_edge_2023} has a weaker error bound $(N\eta)^{-1/6}+q^{-1/3}$, which is converted into a stronger bound in \cite[Theorem 3.4]{he_edge_2023} at the edge. Similar arguments can be made in the bulk to obtain \eqref{eq:avSingleLL} and \eqref{eq:isoSingleLL} (alternatively, we can use a simpler version of the arguments in this section with \cite[Theorem 3.1]{he_edge_2023} as input; see Appendix \ref{app:singleLL}).

Our goal in this section is to prove Theorem \ref{thm3}. Define
\begin{align}
    S^{av}_{z}(w_{1},B_{1},...,w_{m},B_{m})&:=\Tr{G_{1}B_{1}\cdots G_{m}B_{m}}-\Tr{M_{z}(w_{1},B_{1},...,w_{m})B_{m}},\label{eq:S^av}\\
    S^{iso}_{\mathfrak{i},\mathfrak{j},z}(w_{1},B_{1},...,w_{m+1})&:=\bigl(G_{1}B_{1}\cdots G_{m+1}-M_{z}(w_{1},B_{1},...,w_{m+1})\bigr)_{\mathfrak{i},\mathfrak{j}}\label{eq:S^iso},
\end{align}
where $G_{j}:=G_{z}(w_{j})$. We can restate Theorem \ref{thm3} as follows.
\begin{theorem}\label{thm:multiLL}
Let $\epsilon>0$ and $X\in\mc{S}_{N}(\epsilon)$. Let $m\in\mbb{N}$, $B_{i}\in\mbb{H}$ for $i=1,...,m$ and
\begin{align}
    a&:=\bigl|\{i:B_{i}\in\{F,F^{*}\}\}\bigr|.
\end{align}
Fix $\delta\geq0,\tau>0$ and define
\begin{align}
    \mc{D}(\delta,\tau)&:=\bigl\{w=E+i\eta:|E|\leq\delta|\eta|,\,N^{-1+\tau}\leq|\eta|\leq10\bigr\},\label{eq:domain}
\end{align}
Then for any fixed $z\in\mbb{D}$
\begin{align}
    \bigl|S^{av}_{z}(w_{1},B_{1},...,w_{m},B_{m})\bigr|&\prec\frac{\mc{E}^{av}_{\eta,q}}{\eta^{m-a/2-1}\wedge1},\label{eq:averagedLL}\\
    \max_{\mathfrak{x},\mathfrak{y}\in[2N]}\bigl|S^{iso}_{\mathfrak{x},\mathfrak{y},z}(w_{1},B_{1},...,w_{m+1})\bigr|&\prec\frac{\mc{E}^{iso}_{\eta,q}}{\eta^{m-a/2}},\label{eq:entry-wiseLL}
\end{align}
uniformly in $w_{j}\in\mc{D}(\delta,\tau)$, where $\eta=\min_{j}|\Im w_{j}|$.
\end{theorem}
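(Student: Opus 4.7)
The plan is to prove Theorem \ref{thm:multiLL} by induction on the chain length $m$, with the two bounds \eqref{eq:averagedLL} and \eqref{eq:entry-wiseLL} established simultaneously so that estimates on $G$-chains of one length feed on estimates of the other. The base case $m=1$ is Theorem \ref{thm:singleLL}; the case $a=1$ reduces to the $a=0$ case via the identity $G_z(w)E_- = -E_- G_z(-w)$ in \eqref{eq:GE-}, which lets one rewrite a single $F$ or $F^*$ in the chain in terms of $E_\pm$ combined with a sign-flipped spectral parameter. A secondary induction on the scale $\eta$ will also be employed: the bound is first established at $\eta \simeq 1$ and descended to $\eta \gtrsim N^{-1+\tau}$ in a discrete set of multiplicative steps, with Lipschitz continuity of $G_z(w)$ on a polynomial grid handling interpolation.

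For the inductive step, the central identity is obtained by writing $G_1 = \hat{M}_z(w_1) - \hat{M}_z(w_1)\underline{WG_1}$ via \eqref{eq:self1} and iteratively applying \eqref{eq:self3}--\eqref{eq:self4} to propagate the $W$ through the chain. After inverting the linear stability operator (whose invertibility in the bulk $|z|<1$ is guaranteed by \eqref{eq:m}), this rewrites the chain as $M_z(w_1,B_1,\ldots,w_m)B_m$ plus genuinely shorter chains plus a term proportional to $\underline{WG_1 B_1 \cdots G_m B_m}$. The shorter chains are controlled by the inductive hypothesis, and the bulk of the work is a high-moment bound on the underlined term: compute $\mathbb{E}|\Tr{\underline{WG_1 B_1 \cdots G_m B_m}}|^{2p}$ via the cumulant expansion, where the second-cumulant contribution is reabsorbed into the $\hat{M}$-renormalisation, while cumulants of order $r+s\geq 3$ come with the prefactor $q^{-(r+s-1)}$ multiplying $r+s$ derivatives of $G$-entries, i.e. products of further $G$-chains.

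Each $B_j \in \{F,F^*\}$ is expected to contribute a factor of $\sqrt{\eta}$ improvement; this is the "regularity" property of $F$ and $F^*$ alluded to in the introduction, the non-Hermitian analogue of traceless test matrices for Wigner matrices \cite{cipolloni_optimal_2022}. For the deterministic side one checks by explicit computation with \eqref{eq:Mhat} and the recursive definition of $M_z(w_1,B_1,\ldots,w_m)$ that the chain is of size $\eta^{-(m-a/2-1)}$, matching the right-hand side of \eqref{eq:averagedLL}. For the random error the same gain must survive the Cauchy--Schwarz and Ward identity manipulations used to pass from isotropic to averaged bounds, which is where the Ward identity $G^*(w)G(w) = \eta^{-1}\Im G(w)$ and the decomposition of $F, F^*$ into $E_\pm$-components combined with the symmetry \eqref{eq:GE-} play a key role.

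The main obstacle, and the point where the sparse model differs substantively from the Wigner case, is closing the cumulant expansion. A single expansion of an $(r+s)$-th cumulant produces derivative terms whose total chain length exceeds $m$, so invoking induction on $m$ naively never closes; in the Wigner setting \cite{cipolloni_optimal_2022} this is resolved by a reduction inequality, but that inequality loses extra powers of $N$ which the sparse model cannot afford. To avoid this, each underlined $W$-factor appearing inside the derivative expressions is re-expanded through \eqref{eq:self3}--\eqref{eq:self4}, gaining a further $q^{-1}$ per pass. Because $q=N^{\epsilon}$, after $L=L(\epsilon)$ iterations the accumulated prefactor $q^{-L}=N^{-L\epsilon}$ dominates any polynomial $\eta^{-O(1)}$ blow-up from chain length (using $N\eta \geq N^{\tau}$), and the remaining "irreducible" terms either close under the inductive hypothesis at the correct chain length or fall under Theorem \ref{thm:singleLL}. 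The delicate part will be the combinatorial bookkeeping that ensures every branch of this iterated expansion closes while preserving the $\eta^{a/2}$ regularity gain from each $F$ and $F^*$.
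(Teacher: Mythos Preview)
Your approach is genuinely different from the paper's, and there is a real gap in how you propose to close the argument at small $\eta$.

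The paper does \emph{not} use a direct moment-method bootstrap on $\eta$. It uses the zigzag strategy: after establishing the global law at $\eta\simeq1$ by exactly the kind of recursive moment estimate you outline (Lemma~\ref{lem:global}), it \emph{propagates} the bound to smaller $\eta$ by alternating two distinct steps. In the ``zig'' step (Section~\ref{sec:zig}) one runs the matrix Ornstein--Uhlenbeck flow \eqref{eq:matrixOU} together with the characteristic flow \eqref{eq:dLambda} on the spectral parameters; the point is that the SDE for $S^{av/iso}_t$ has a structure in which the drift is controlled by lower-order chains and a Gr\"onwall argument closes, with martingale fluctuations handled by BDG. No cumulant expansion enters here at all. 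In the ``zag'' step (Section~\ref{sec:zag}) one removes the accumulated Gaussian component by a Green's function comparison; because the second moments of $X_t$ are time-independent, the time derivative $\partial_t\mathbb{E}|S|^{2p}$ involves \emph{only cumulants of order $\geq3$}, so every term already carries at least one factor of $q^{-1}$. It is only at this point, and only for these higher-cumulant remainders, that the iterated cumulant expansion (Lemma~\ref{lem:polynomialBound3}) is invoked. The iteration works because each zag step needs to bridge at most a factor of $q$ in $\eta$, so $O(1/\epsilon)$ zig-zag iterations reach $\eta\geq N^{-1+\tau}$.

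Your proposal, by contrast, attempts the recursive moment estimate directly at small $\eta$. The specific gap is your claim that ``the second-cumulant contribution is reabsorbed into the $\hat M$-renormalisation''. This is only true for the self-energy part; the second-cumulant term where the derivative $\bar\partial_{ij}$ acts on the remaining $|S|^{2p-1}$ factors is \emph{not} absorbed and generates chains of length up to $2m$. In the paper's global law these longer chains are reduced to length $m$ by the crude norm bound $\|G\|\leq\eta^{-1}$, which is affordable when $\eta\simeq1$ but not when $\eta\ll1$. Your iterated-expansion mechanism does not help here: re-expanding via \eqref{eq:self3}--\eqref{eq:self4} and performing another cumulant expansion gains $q^{-1}$ \emph{only on the higher-cumulant branch}; on the second-cumulant branch you get $(N\eta)^{-1/2}$ from a single Cauchy--Schwarz plus Ward, and the resulting object is again a chain of length $>m$ at the same $\eta$, so the recursion does not terminate by $q^{-L}$ dominance alone. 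The paper sidesteps this entirely because in the zag step the second cumulant never appears. Note also that the paper's flow argument requires simultaneously tracking chains up to length $M=2m$ with a weaker isotropic error (the $m>M/2$ cases in \eqref{eq:Psi^av}--\eqref{eq:Psi^iso}), a doubling hierarchy your induction on $m$ alone does not provide.
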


For even $a$, the error is indeed of lower order than the deterministic approximation, as shown in the following lemma whose proof is given in Appendix \ref{app:deterministicBounds}.
\begin{lemma}\label{lem:deterministicBounds}
Let $m\in\mbb{N}$, $B_{i}\in\mbb{H}$ for $i=1,...,m$ and
\begin{align}
    a&=|\{i:B_{i}\in\{F,F^{*}\}\}|.
\end{align}
Then for any $z\in\mbb{D}$ and $w_{j}\in\mbb{C}\setminus\mbb{R}$ such that $|\Re w_{j}|\lesssim|\Im w_{j}|$ we have
\begin{align}
    \Tr{M_{z}(w_{1},B_{1},...,w_{m})B_{m}}&\lesssim\frac{1}{\eta^{m-\lceil a/2\rceil-1}},\label{eq:deterministicAverageBound}\\
    \|M_{z}(w_{1},B_{1},...,w_{m+1})\|&\lesssim\frac{1}{\eta^{m-\lceil a/2\rceil}},\label{eq:deterministicNormBound}
\end{align}
where $\eta=\min_{i}|\Im w_{i}|$.
\end{lemma}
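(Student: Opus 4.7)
The plan is to prove both bounds by induction on $m$, using the recursive definition of $M_z$ from \cite[Definition 4.1]{cipolloni_optimal_2024} together with the algebraic structure of $\mbb{H}$. The key observation to exploit is that $\mbb{H}$ is a unital four-dimensional subalgebra of $\mbb{C}^{2N\times 2N}$: the multiplication rules $F^{2}=(F^{*})^{2}=0$, $FF^{*}=(1_{2N}+E_{-})/2$, $F^{*}F=(1_{2N}-E_{-})/2$, $E_{-}^{2}=1_{2N}$ all close in $\mbb{H}$, and the self-energy operator arising from the covariance of $W_{z}$ preserves $\mbb{H}$. I will therefore show inductively that $M_z(w_1,B_1,\ldots,w_k)$ remains in $\mbb{H}$ for every $k$ and expand it as $c_{1}\cdot 1_{2N}+c_{-}E_{-}+c_{F}F+c_{F^{*}}F^{*}$, reducing the problem to bounding four scalar coefficients.

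For the base case $m=0$, formula \eqref{eq:M_{z}single} gives $M_{z}(w)=m_{z}(w)\, 1_{2N}-zu_{z}(w)F-\bar z u_{z}(w)F^{*}$, and \eqref{eq:cubic}--\eqref{eq:m'} yield $|m_{z}(w)|,|u_{z}(w)|\lesssim 1$ uniformly in the bulk (using the bulk bound $|z|<1$ from \eqref{eq:m}), so $\|M_{z}(w)\|\lesssim 1$ and both claims hold trivially. For the inductive step the recursion writes $M_{z}(w_{1},B_{1},\ldots,w_{m+2})$ as $\mc{L}^{-1}$ applied to a quantity involving the shorter chain $M_{z}(w_{2},B_{2},\ldots,w_{m+2})$, where $\mc{L}$ is a linear operator on $\mbb{H}$ built from the self-energy and the single-resolvent $M$'s at the endpoints $w_{1},w_{m+2}$. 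The essential fact I will establish is that $\mc{L}$ decouples, up to $O(\eta)$ corrections coming from the off-diagonal coefficients $-zu_{z},-\bar z u_{z}$ of the single-resolvent $M_{z}$, with respect to the splitting $\mbb{H}=\mathrm{span}\{1_{2N},E_{-}\}\oplus\mathrm{span}\{F,F^{*}\}$: on the first (``diagonal'') sub-block $\|\mc{L}^{-1}\|=O(1/\eta)$, which is the source of the $1/\eta$ per resolvent, while on the second (``off-diagonal'') sub-block $\|\mc{L}^{-1}\|=O(1)$. A test matrix $B_{i}\in\{F,F^{*}\}$ toggles between these two sub-blocks, so only complete pairs of $F,F^{*}$ produce a net saving of $\eta$, which gives precisely the $\lceil a/2\rceil$ exponent. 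The trace bound \eqref{eq:deterministicAverageBound} then follows with one extra factor of $\eta$ gained, because $\Tr{\cdot B_{m}}$ projects $M_{z}$ onto a specific scalar coefficient in the $\mbb{H}$-decomposition (chosen by $B_{m}$ via $\Tr{1_{2N}}=1$ and $\Tr{E_{-}}=\Tr{F}=\Tr{F^{*}}=0$), and that coefficient is controlled by the regular sub-block of $\mc{L}^{-1}$ rather than the singular one.

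The main obstacle will be extracting the required block decomposition of $\mc{L}$ from the recursive definition of $M_{z}$. Concretely I will set up the $4\times 4$ matrix of $\mc{L}$ in the basis $\{1_{2N},E_{-},F,F^{*}\}$, verify that $|\det\mc{L}|$ is bounded away from zero on $\mc{D}(\delta,\tau)$ uniformly in $z\in\mbb{D}$ (requiring $\delta$ sufficiently small and using the bulk condition via \eqref{eq:m}), and read off the $\eta$-scaling of each entry of $\mc{L}^{-1}$ to confirm the claimed block structure. This is a routine but careful linear-algebra calculation; the most delicate point is to check that the $O(\eta)$ couplings between the two sub-blocks do not compound destructively over the $m$ steps of the induction and thereby spoil the $\lceil a/2\rceil$ scaling.
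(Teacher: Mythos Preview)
Your approach is genuinely different from the paper's. The paper does \emph{not} work directly with the recursive definition of $M_z$ and the stability operator. Instead it (i) removes every $E_\pm$ insertion by the resolvent identity and contour integration, paying $\eta^{-1}$ each time and reducing to the case $m=a$ (all $B_j\in\{F,F^*\}$); (ii) quotes \cite[Lemma~4.3]{cipolloni_optimal_2024} for $m=a\le 4$; (iii) for $m=a>4$ derives the ODE \eqref{eq:ODE} for $\Tr{M_t(\ldots)B_m}$ along the characteristic flow, in which the forcing terms $A_{p,r}$ are products of traces of strictly shorter chains, so the induction hypothesis and Gr\"onwall close immediately. The dynamical route bypasses the stability operator entirely.

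Your static route is viable in principle, and your key structural claim is in fact stronger than you state: since the self-energy $\mc{S}$ annihilates $\mathrm{span}\{F,F^*\}$, the stability operator $R\mapsto R-M_1\mc{S}[R]M_2$ is exactly block-\emph{triangular} (identity on $\mathrm{span}\{F,F^*\}$), not merely block-diagonal up to $O(\eta)$. Your ``$O(\eta)$ corrections'' remark is wrong as written (in the bulk $zu_z\sim 1$, not $O(\eta)$), but harmless because triangularity already gives what you need. The real gap is that the recursion in \cite[Definition~4.1]{cipolloni_optimal_2024} is not simply $\mc{L}^{-1}$ applied to $M_1B_1M_z(w_2,\ldots,w_{m+2})$: there are additional ``cutting'' terms, schematically $M_1\Tr{M_z(w_1,\ldots,w_j)E_\nu}M_z(w_j,\ldots,w_{m+2})$ for each intermediate $j$, exactly analogous to the $A_{p,r}$ in the paper's ODE. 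To close the induction you must bound each such product using the \emph{trace} bound \eqref{eq:deterministicAverageBound} on the first factor (with an $E_\nu$ replacing some $B_j$, so $a$ decreases) simultaneously with the norm bound on the second, and track how the two $\lceil a/2\rceil$ exponents combine --- this forces a more refined induction hypothesis on the four $\mbb{H}$-coefficients than your sketch provides. The paper's reduction to $m=a$ followed by the flow sidesteps precisely this bookkeeping.
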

The condition $|\Re w_{j}|\lesssim|\Im w_{j}|$ is needed for the case $(m,a)=(2,1)$; in general we have
\begin{align}
    |\Tr{M_{z}(w_{1},B_{1},w_{2})}|&\lesssim1+\frac{|\Re w_{1}+\Re w_{2}|}{\eta},
\end{align}
if $\Im w_{1}\Im w_{2}<0$.

For $M\in\mbb{N}$ even, define the error parameters
\begin{align}
    \Psi^{av}(\eta,m,a)&:=\frac{1_{\{m\leq M/2\}}\cdot\mc{E}^{av}_{\eta,q}+1_{\{m>M/2\}}\cdot\mc{E}^{iso}_{\eta,q}}{\eta^{m-a/2-1}},\label{eq:Psi^av}\\
    \Psi^{iso}(\eta,m,a)&:=\frac{1_{\{m\leq M/2\}}\cdot\mc{E}^{iso}_{\eta,q}+1_{\{m>M/2\}}}{\eta^{m-a/2}},\label{eq:Psi^iso}
\end{align}
and the properties
\begin{align}
    \mc{L}^{av}_{z}(m,\delta,\tau)&:\quad\max_{B_{j}\in\mbb{H}}\sup_{w_{j}\in\mc{D}(\delta,\tau)}\frac{|S^{av}(w_{1},B_{1},...,w_{m},B_{m})|}{\Psi^{av}(\eta,m,a)}\prec1,\label{eq:Lav}\\
    \mc{L}^{iso}_{z}(m,\delta,\tau)&:\quad\max_{B_{j}\in\mbb{H}}\sup_{w_{j}\in\mc{D}(\delta,\tau)}\frac{|S^{iso}(w_{1},B_{1},...,w_{m+1})|}{\Psi^{iso}(\eta,m,a)}\prec1,\label{eq:Liso}
\end{align}
We will also need to isolate the following special case of $\mc{L}^{av}_{z}(2,\delta,\tau)$:
\begin{align}
    \mc{L}^{av}_{z}(2,1,\delta,\tau)&:\quad\max_{B_{1}\in\{F,F^{*}\}}\max_{\nu=\pm}\sup_{w_{j}\in\mc{D}(\delta,\tau)}\frac{|S^{av}_{z}(w_{1},B_{1},w_{2},E_{\nu})|}{\Psi^{av}(\eta,2,1)}\prec1,\label{eq:Lav21}
\end{align}
If $m\leq M/2$, then Theorem \ref{thm:multiLL} is equivalent to $\mc{L}^{av}_{z}(m,\delta,\tau)$ and $\mc{L}^{iso}_{z}(m,\delta,\tau)$. In the course of proving Theorem \ref{thm:multiLL} we will need the weaker errors for $m>M/2$.

We will use the ``zigzag" method of Cipolloni--Erd\H{o}s--Schr\"{o}der \cite{cipolloni_mesoscopic_2024}, which has three parts. First, we show that the local laws hold in the global regime $|\Im w|\gtrsim1$; in this regime we can afford the trivial bound $\|G_{z}(w)\|\leq|\Im w|^{-1}$ which greatly simplifies the proof. Second, we show that if we simultaneously evolve the parameters $(z,w)$ in time $t$ while adding a Gaussian component of variance $t$, then the desired bounds propagate along the flow. Third, we show that, for a certain range of $|\Im w|$, the Gaussian component can be removed by bounding the time derivative of resolvents. The second (``zig") and third (``zag") steps are iterated until we reach the threshold $|\Im w|\geq N^{-1+\tau}$. Combining these three steps, we can propagate the local laws from the global to local regimes. The first two steps are not much affected by the sparsity of the matrix (as long as we restrict to $|\Re w|\lesssim|\Im w|$). The major work comes in the third step, for which we will rely on iterated cumulant expansions (see \cite{lee_local_2018,huang_transition_2020,he_fluctuations_2021,schnelli_convergence_2022,lee_higher_2024} for the use of iterated cumulant expansions in the study of edge statistics of sparse Hermitian matrices). These steps will be discussed in three successive subsections below.

\subsection{Global Law}
For the global law, the arguments $w_{j}$ of the resolvents are far from the real axis, i.e. $|\Im w_{j}|\gtrsim1$.
\begin{lemma}\label{lem:global}
For any $z\in\mbb{D}$, $\delta>0$ and $m\in\mbb{N}$, $\mc{L}^{av}_{z}(m,\delta,1)$ and $\mc{L}^{iso}_{z}(m,\delta,1)$ are true.
\end{lemma}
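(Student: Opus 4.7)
The defining feature of the global regime $\eta \in [1,10]$ is the deterministic bound $\|G_z(w_j)\| \leq |\Im w_j|^{-1} \lesssim 1$, together with $\|B_j\| \lesssim 1$ for $B_j \in \mbb{H}$. Consequently every resolvent chain has operator norm $O(1)$, no powers of $1/\eta$ ever accumulate, and the target error parameters collapse to $\Psi^{av}(\eta,m,a) \simeq \mc{E}^{av}_{\eta,q}$ and $\Psi^{iso}(\eta,m,a) \simeq \mc{E}^{iso}_{\eta,q}$. I would proceed by simultaneous induction on $m$ for $\mc{L}^{av}_z$ and $\mc{L}^{iso}_z$, the base case $m=1$ being furnished directly by Theorem \ref{thm:singleLL}.

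For the inductive step on the averaged law, I would peel off the first resolvent using \eqref{eq:self1}:
\begin{align*}
    G_1 B_1 G_2 \cdots G_m B_m &= \hat{M}_1 B_1 G_2 \cdots G_m B_m - \hat{M}_1 \underline{W G_1} B_1 G_2 \cdots G_m B_m.
\end{align*}
The first piece reduces, after replacing the $\Tr{G_1}$ hidden inside $\hat{M}_1$ by its deterministic value via Theorem \ref{thm:singleLL}, to a length $(m-1)$ chain handled by the inductive hypothesis. For the second piece I would apply the identity \eqref{eq:self3} to split off shorter traces and a fully renormalized term $\underline{W G_1 B_1 \cdots G_m B_m}$, then estimate a high moment $\mbb{E}|S^{av}|^{2p}$ by running a cumulant expansion against one of the $S^{av}$ factors. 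The second-order (Gaussian) contribution gives a product of a shorter chain (bounded by the inductive $\mc{L}^{av}_z$) with two isotropic resolvent entries (bounded by the inductive $\mc{L}^{iso}_z$), yielding the square of the target bound $\mc{E}^{av}_{\eta,q}$; each cumulant of order $r+s \geq 3$ carries an explicit factor $1/(Nq^{r+s-2})$ by \ref{def1:3}, which combined with the norm bounds on the remaining chains is strictly smaller than $\mc{E}^{av}_{\eta,q}$. The isotropic law is proven by the same scheme applied to $\mbb{E}|(G_1 B_1 \cdots G_{m+1})_{\mathfrak{x},\mathfrak{y}} - M_{\mathfrak{x},\mathfrak{y}}|^{2p}$, controlling the diagonal sums arising from $\bar{\partial}_{ij}$ derivatives via the Ward identity $GG^{*} = \eta^{-1}\Im G = O(1)$.

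The principal obstacle is combinatorial rather than analytic: one must verify that, after all contractions produced by the renormalization $\underline{\,\cdot\,}$ and the cumulant expansion, the surviving leading-order terms assemble precisely into the recursive deterministic approximation $M_z(w_1,B_1,\ldots,w_m)$ of \cite[Definition 4.1]{cipolloni_optimal_2024}. Because $\eta \gtrsim 1$ uniformly, the delicate $1/\eta$ power-counting that dominates the local-regime arguments of Sections \ref{sec3} is absent, so once the matching of leading terms is in place the estimate reduces to routine polynomial counting in the cumulant expansion plus the inductive hypotheses.
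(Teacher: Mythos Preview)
Your proposal is correct and follows essentially the same route as the paper's proof: induction on $m$, recursive moment estimates via the self-consistent equation and cumulant expansion, and systematic use of the trivial norm bound $\|G_z(w)\|\lesssim 1$ in the global regime. The paper chooses to sketch the isotropic law (using \eqref{eq:self2}--\eqref{eq:self4}) rather than the averaged one (your \eqref{eq:self1}--\eqref{eq:self3}), but this is a symmetric presentational choice; both proofs ultimately defer the detailed bookkeeping to \cite[Appendix~B]{cipolloni_optimal_2022}. One point the paper makes slightly more explicit than you do: when Cauchy--Schwarz in the cumulant expansion produces a chain $(GB)^{m+l}G$ of length exceeding $m$, one cannot invoke the induction hypothesis and must instead fall back on the operator-norm bound to reduce it to length-$m$ chains---this is automatic in the global regime but is the mechanism that fails locally and motivates the ``zigzag'' machinery.
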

\begin{proof}
The proof is based on the Lee--Schnelli \cite{lee_local_2018} approach via recursive moment estimates. The main simplification in the global regime is that we can afford the norm bound $\|G_{z}(w)\|\leq|\Im w|^{-1}$. Since the proof is very similar to the proof of the global multi-resolvent local law for Wigner matrices \cite[Appendix B]{cipolloni_optimal_2022}, we merely give a sketch for the entry-wise law. 

Let 
\begin{align*}
    S_{\mathfrak{i},\mathfrak{j}}&\equiv S_{\mathfrak{i},\mathfrak{j}}^{iso}(w_{1},B_{1},...,w_{m+1}),\\
    \Omega_{p}(m,a)&=\max_{\mathfrak{i},\mathfrak{j}\in[2N]}\sup_{w_{j}\in\mc{D}(\delta,1)}\mbb{E}|S_{\mathfrak{i},\mathfrak{j}}|^{2p};
\end{align*}
we want to show that, for any fixed $p\in\mbb{N}$,
\begin{align}
    \Omega_{p}(m,a)&\prec\left(\Psi^{iso}(\eta,m,a)\right)^{2p}.\label{eq:Omega2pm}
\end{align}
We do this by induction on $m$. For simplicity we drop the subscripts from $G_{j}$ and $B_{j}$ and we drop the argument of $\Omega_{p}$. The base case $m=0$ is the single resolvent local law. Assume that the global averaged and entry-wise laws are true for $l\leq m$ and $l\leq m-1$ respectively. Using \eqref{eq:self2} and \eqref{eq:self4}, we have
\begin{align*}
    (GB)^{m}G&=(GB)^{m-1}GB\hat{M}+\sum_{l=0}^{m-1}\sum_{\nu=\pm}\nu\Tr{(GB)^{m-l}GE_{\nu}}(GB)^{l}GE_{\nu}\hat{M}-\underline{(GB)^{m}GW}\hat{M}.
\end{align*}
Replacing one factor of $S$ in $|S|^{2p}$ with the right-hand side above and using the induction hypothesis, we find that
\begin{align*}
    \mbb{E}|S_{i,j}|^{2p}&\prec\Psi^{iso}(\eta,m,a)\Omega^{(2p-1)/2p}_{p}+\left|\mbb{E}(\underline{(GB)^{m}GW}\hat{M})_{i,j}S_{i,j}^{p-1}\bar{S}_{i,j}^{p}\right|,
\end{align*}
where we have used H\"{o}lder's inequality in the form $\mbb{E}|S_{\mathfrak{i},\mathfrak{j}}|^{2p-1}\leq(\mbb{E}|S_{\mathfrak{i},\mathfrak{j}}|^{2p})^{(2p-1)/2p}$. Henceforth we will ignore the difference between $S$ and $\bar{S}$, which is immaterial to the proof (we write for instance $S^{p-1}\bar{S}^{p}=S^{2p-1}$). Since $\hat{M}$ is a scalar in each block we have
\begin{align*}
    (\underline{(GB)^{m}GW}\hat{M})_{i,j}&=\sum_{k}\bigl(\hat{m}\underline{((GB)^{m}G)_{i,\hat{k}}\bar{X}_{j,k}}-\bar{z}\hat{u}\underline{((GB)^{m}G)_{i,k}X_{k,j}}\bigr).
\end{align*}
Consider the contribution from the first term above; by a cumulant expansion we have
\begin{align*}
    \mbb{E}\sum_{k}\underline{((GB)^{m}G)_{i,\hat{k}}\bar{X}_{j,k}}S^{2p-1}_{i,j}&=\mbb{E}\frac{1}{N}\sum_{k}((GB)^{m}G)_{i,\hat{k}}\partial_{j,k}\hat{m}S_{i,j}^{2p-1}\\
    &+\sum_{r+s=2}^{L}\frac{C_{r,s}}{Nq^{r+s-1}}\mbb{E}\sum_{k}\partial^{r}_{j,k}\bar{\partial}^{s}_{j,k}\hat{m}((GB)^{m}G)_{i,\hat{k}}S_{i,j}^{2p-1}\\
    &+O(N^{-D}).
\end{align*}
Consider the contribution from the second cumulant (the first term in the right hand side above). The derivative acting on $\hat{m}$ gives
\begin{align*}
    \frac{\partial\hat{m}}{\partial\Tr{G}}\cdot\frac{1}{N}(G^{2})_{\hat{k},j}&=\frac{\hat{u}+2(\hat{m})^{2}}{N}(G^{2})_{\hat{k},j}.
\end{align*}
Since $|\hat{m}|,|\hat{u}|\lesssim1$, by $|(G^{2})_{i,j}|\leq\eta^{-1}\Im G_{i,j}$ we gain a factor of $(N\eta)^{-1}$. The derivative acting on $S$ gives
\begin{align*}
    \sum_{l=0}^{m}((GB)^{m-l}G)_{i,j}((GB)^{l}G)_{\hat{k},j}.
\end{align*}
Evaluating the sum over $k$ we obtain terms of the form
\begin{align*}
    \frac{1}{N}\mbb{E}((GB)^{m}G(E_{+}-E_{-})(GB)^{l}G)_{i,j}((GB)^{m-l}G)_{i,j}S_{i,j}^{2p-2}.
\end{align*}
When $n\leq m-1$ we can use the induction hypothesis to estimate $(GB)^{n}G$. When $n\geq m+1$, we use Cauchy--Schwarz and the norm bound:
\begin{align*}
    |((GB)^{m+l}G)_{i,j}|&\leq\frac{1}{\eta}\bigl(((GB)^{m/2+l-1}\Im G(B^{*}G^{*})^{m/2+l-1})_{i,i}((G^{*}B^{*})^{m/2}\Im G(BG)^{m/2})_{j,j}\bigr)^{1/2}\\
    &\leq\frac{1}{2\eta^{l}}\bigl(((GB)^{m/2}\Im G(B^{*}G^{*})^{m/2})_{i,i}+((G^{*}B^{*})^{m/2}\Im G(BG)^{m/2})_{j,j}\bigr),
\end{align*}
where we have assumed for concreteness that $m$ is even (similar manipulations can be done for $m$ odd). Combining these bounds with Young's inequality and the definition of $\Omega_{p}$ we obtain
\begin{align*}
    \left|\frac{1}{N}\mbb{E}((GB)^{m}G(E_{+}-E_{-})(GB)^{l}G)_{i,j}((GB)^{m-l}G)_{i,j}S_{i,j}^{2p-2}\right|&\lesssim\frac{\Omega^{(2p-2)/2p}_{p}}{N\eta^{m+1}}+\frac{\Omega^{(2p-1)/2p}_{p}}{N\eta^{m/2+1}}.
\end{align*}

The contribution from higher order cumulants can be bounded in a similar fashion. The first simplyfying remark is that derivatives of $\hat{m}$ and $\hat{u}$ with respect to $\Tr{G}$ are themselves polynomials in $\hat{m}$ and $\hat{u}$. The single factor of $G^{2}$ that results from derivatives of $\Tr{G}$ with respect to matrix entries can be reduced to $G$ by the identity $|G^{2}|=\eta^{-1}\Im G$. Therefore, derivatives of $\hat{m}$ and $\hat{u}$ result in the product of a polynomial in $\hat{m}$ and $\hat{u}$ and a polynomial in entries of $G$. We organise the sum in terms of the number $d$ of copies of $S$ on which derivatives act, so that a general term (up to a polynomial in $\hat{m}$ and $\hat{u}$, which can be ignored since $\hat{m},\hat{u}\lesssim1$ with very high probability) takes the form
\begin{align*}
    R&:=\frac{1}{Nq^{r+s-1}}\mbb{E}\sum_{k}P_{x}(\mathfrak{i},\mathfrak{j})Q_{y}(\mathfrak{k},\mathfrak{l})S_{i,j}^{2p-d},
\end{align*}
where $P_{x}$ and $Q_{y}$ are monomials of degree $x$ and $y$ in entries of $(GB)^{n}G,\,n\leq m-1$ and $(GB)^{m}G$ respectively:
\begin{align*}
    P_{x}(\mbf{n},\mathfrak{i},\mathfrak{j})&=((GB)^{n_{1}}G)_{\mathfrak{i}_{1},\mathfrak{j}_{1}}\cdots((GB)^{n_{x}}G)_{\mathfrak{i}_{x},\mathfrak{j}_{x}},\\
    Q_{y}(\mathfrak{k},\mathfrak{l})&=((GB)^{m}G)_{\mathfrak{k}_{1},\mathfrak{l}_{1}}\cdots((GB)^{m}G)_{\mathfrak{k}_{y},\mathfrak{l}_{y}}.
\end{align*}
The indices $\mathfrak{i},\mathfrak{j},\mathfrak{k},\mathfrak{l}$ belong to $\{i,j,\hat{k}\}$ and we have the relations $x+y=r+s+1$, $|\mbf{n}|=(d-y)m$ and $1\leq d\leq r+s+1$. Let
\begin{align*}
    \mc{O}_{x}&\equiv\mc{O}_{x}(\mathfrak{i},\mathfrak{j})=\{j:|\{\mathfrak{i}_{j},\mathfrak{j}_{j}\}\cap\{\hat{k}\}|=1\},\\
    \mc{O}_{y}&\equiv\mc{O}_{y}(\mathfrak{k},\mathfrak{l})=\{j:|\{\mathfrak{k}_{j},\mathfrak{l}_{j}\}\cap\{\hat{k}\}|=1\},
\end{align*}
count off-diagonal occurrences of index $\hat{k}$. If $|\mc{O}_{x}|=|\mc{O}_{y}|=0$, then each copy of $S$ on which a derivative acts must be acted on by at least two derivatives, and there must be at least one derivative acting on the factor $((GB)^{m}G)_{i,\hat{k}}$. This implies that $r+s\geq 2d-1$ and $r+s\geq3$ if $d=1$, which in turn guarantees that we have sufficient powers of $q^{-1}$. Using the induction hypothesis for $P_{x}$, the definition of $\Omega_{p}$ for $Q_{y}$ and Young's inequality, we obtain the bound
\begin{align*}
    R&\prec\frac{\Omega^{(2p-d)/2p}_{p}}{q^{d}\eta^{d(m-a/2)}}.
\end{align*}

There are two cases in which $|\mc{O}_{x}|+|\mc{O}_{y}|=1$. In the first case, an odd number of derivatives act on $((GB)^{m}G)_{i,\hat{k}}$ and exactly one copy of $S$. In the second case, an even number of (or zero) derivatives act on $((GB)^{m}G)_{i,\hat{k}}$ and all copies of $S$ on which a derivative acts are acted on by at least two derivatives. In either case, we must have $r+s\geq2d-2$ and $r+s\geq2$ if $d=1$. We are therefore one short of the requisite number of factors of $q^{-1}$, but we can gain a factor of $(N\eta)^{-1/2}$ from the off-diagonal entry with index $k$ by Cauchy--Schwarz. As before, if Cauchy--Schwarz leads to a factor of $(GB)^{m+l}G$ we use the norm bound to reduce to $(GB)^{m}G$. Thus we have the bound
\begin{align*}
    R&\prec\frac{\Omega^{(2p-d)/2p}_{p}}{\sqrt{N\eta}q^{d-1}\eta^{d(m-a/2)}}.
\end{align*}

If $|\mc{O}_{x}|+|\mc{O}_{y}|\geq2$, then we can gain a factor of $(N\eta)^{-1}$ from the sum over $k$. Since we always have $r+s\geq d-1$, this leads to the bound
\begin{align*}
    R&\prec\frac{\Omega^{(2p-d)/2p}_{p}}{Nq^{d-2}\eta^{d(m-a/2)+1}}.
\end{align*}

Combining these bounds we have
\begin{align*}
    R&\prec\left(\Psi^{iso}(\eta,m,a)\right)^{d}\Omega^{(2p-d)/2p}_{p}.
\end{align*}
Summing over $d=1,...,2p$, we have found that
\begin{align*}
    \left|\mbb{E}\sum_{k}\underline{((GB)^{m}G)_{i,\hat{k}}\bar{X}_{j,k}}S^{2p-1}_{i,j}\right|&\prec\sum_{d=1}^{2p}\left(\Psi^{iso}(\eta,m,a)\right)^{d}\Omega^{(2p-d)/2p}_{p},
\end{align*}
and ultimately
\begin{align*}
    \mbb{E}|S|^{2p}_{i,j}&\prec\sum_{d=1}^{2p}\left(\Psi^{iso}(\eta,m,a)\right)^{d}\Omega^{(2p-d)/2p}_{p}.
\end{align*}
Taking the maximum over $i,j$ (and similarly for $\hat{\imath},\hat{\jmath}$), this implies \eqref{eq:Omega2pm}.
\end{proof}

\subsection{Characteristic Flow}\label{sec:zig}
The second step is to simultaneously evolve the matrix $X$ and the spectral parameters $(z,w)$ in such a way that $|\Im w|$ decreases as the variance of the Gaussian component in $X$ increases. The evolution of $(z,w)$ is chosen so that certain terms cancel in the ensuing stochastic differential equations (SDEs) for resolvents.

Let $B_{ij}(t),\,i,j=1,...,N$ be iid standard complex Brownian motions and consider the SDE
\begin{align}
    \mathrm{d}X(t)&=-\frac{1}{2}X(t)\mathrm{d}t+\frac{1}{\sqrt{N}}\mathrm{d}B(t),\label{eq:matrixOU}
\end{align}
with initial condition $X(0)=X$. Let
\begin{align}
    \Lambda_{t}(w,z)&:=\begin{pmatrix}w_{t}&z_{t}\\\bar{z}_{t}&w_{t}\end{pmatrix}
\end{align}
solve
\begin{align}
    \frac{\mathrm{d}\Lambda_{t}}{\mathrm{d}t}&=-\Tr{M_{t}}-\frac{1}{2}\Lambda_{t},\label{eq:dLambda}
\end{align}
with initial condition $(w,z)\in\mbb{C}^{2}$, where
\begin{align}
    M_{t}&\equiv M_{z_{t}}(w_{t}):=\begin{pmatrix}m_{t}&-z_{t}u_{t}\\-\bar{z}_{t}u_{t}&m_{t}\end{pmatrix}.
\end{align}
This is equivalent to the system
\begin{align}
    \frac{\mathrm{d}w_{t}}{\mathrm{d}t}&=-m_{t}-\frac{1}{2}w_{t},\quad\frac{\mathrm{d}z_{t}}{\mathrm{d}t}=-\frac{1}{2}z_{t}.\label{eq:dw}
\end{align}
By \cite[Lemma 6.5]{cipolloni_mesoscopic_2024}, we have
\begin{align}
    \mathrm{d}M_{t}&=\frac{1}{2}M_{t}\mathrm{d}t.
\end{align}
In particular, $\mathrm{d}m_{t}=\frac{1}{2}m_{t}\mathrm{d}t$. The explicit solution is given by
\begin{align}
    (w_{t},z_{t})&=\bigl(w_{0}e^{-t/2}-m_{0}(e^{t/2}-e^{-t/2}),z_{0}e^{-t/2}\bigr).
\end{align}
From this we can obtain the bound
\begin{align}
    \int_{0}^{t}\frac{1}{|\Im w_{s}|^{\alpha+1}}\diff s&\lesssim\frac{1}{|\Im m_{z_{t}}(w_{t})||\Im w_{t}|^{\alpha}},\qquad\alpha>0.\label{eq:integralBound}
\end{align}
The solution to the reverse dynamics is given by
\begin{align}
    (w_{-t},z_{-t})&=\bigl(w_{0}e^{t/2}+m_{0}(e^{t/2}-e^{-t/2}),z_{0}e^{t/2}\bigr).\label{eq:reverse1}
\end{align}

For $w_{j}\in\mbb{C}\setminus\mbb{R}$, let $(w_{j,t},z_{t})$ be the solution to \eqref{eq:dw} with initial condition $(w_{j},z)$, and
\begin{align}
    G_{j,t}&:=\begin{pmatrix}-w_{j,t}&X(t)-z_{t}\\X(t)^{*}-\bar{z}_{t}&-w_{j,t}\end{pmatrix}^{-1}.
\end{align}
Define
\begin{align}
    S^{av}_{t}(w_{1},B_{1},...,w_{m},B_{m})&:=\Tr{\bigl(G_{1,t}B_{1}\cdots G_{m,t}-M_{t}(w_{1},B_{1},...,w_{m})\bigr)B_{m}},\\
    S^{iso}_{\mathfrak{i},\mathfrak{j},t}(w_{1},B_{1},...,w_{m+1})&=\bigl(G_{1,t}B_{1}\cdots G_{m+1,t}-M_{t}(w_{1},B_{1},...,w_{m+1})\bigr)_{\mathfrak{i},\mathfrak{j}},
\end{align}
where $M_{t}(w_{1},B_{1},...,w_{m+1})$ is the deterministic approximation to $G_{1,t}B_{1}\cdots G_{m+1,t}$. For $\mbf{w}\in\mbb{C}^{m}$, we define
\begin{align}
    \sigma(\mbf{w})&:=\{\mbf{w}'\in\mbb{C}^{m}:|\Re w'_{j}|=|\Re w_{k}|,\,|\Im w'_{j}|\geq|\Im w_{k}|\text{ for some }k\},
\end{align}
and we use the same symbol $\sigma$ for each $m\in\mbb{N}$. Note that if $\mbf{w}'\in\sigma(\mbf{w})$ then $\sigma(\mbf{w}')\subset\sigma(\mbf{w})$. Let $M\in\mbb{D}$ be even and define the stopping times
\begin{align}
    \tau^{av}_{z}(\mbf{w})&:=\min_{1\leq m\leq M}\inf\left\{t\geq0:\max_{B_{j}\in\mbb{H}}\sup_{\mbf{w}'\in\sigma(\mbf{w})}\frac{|S^{av}_{t}(w'_{1},B_{1},...,w'_{m},B_{m})|}{\Psi^{av}(\eta_{t},m,a)}>N^{(m+a)\xi}\right\},\\
    \tau^{iso}_{z}(\mbf{w})&:=\min_{1\leq m\leq M}\inf\left\{t\geq0:\max_{B_{j}\in\mbb{H}}\sup_{\mbf{w}'\in\sigma(\mbf{w})}\frac{|S^{iso}_{t}(w'_{1},B_{1},...,w'_{m+1})|}{\Psi^{iso}(\eta_{t},m,a)}>N^{(m+2+a)\xi}\right\},
\end{align}
where $\eta_{t}=\min_{j}|\Im w_{j,t}|$ and $a$ is the number of $B_{j}\in\{F,F^{*}\}$. Define the overall stopping time
\begin{align}
    \tau_{z}(\mbf{w})&=\min\{\tau^{av}_{z}(\mbf{w}),\tau^{iso}_{z}(\mbf{w})\}.
\end{align}
Note that the power of $N^{\xi}$ in the definition of $\tau^{av/iso}$ grows in $m\in[M]$ and $a\in[m]$. The reason for this choice is that when we evaluate the derivative of $S^{av/iso}_{t}$ we will bound the terms with lower values of $m$ and $a$ using the definition of $\tau^{av/iso}$. Moreover, the power of $N^{\xi}$ is larger for $\tau^{iso}$ than $\tau^{av}$, for a similar reason. Note also that the errors for $S^{av/iso}_{t}$ for $m>M/2$ are larger than those for $m\leq M/2$, i.e. in order to obtain the strong error for $M/2$ resolvents we need to obtain a weaker error for $M$ resolvents. The maximal time for which we run the flow is
\begin{align}
    T(\mbf{w})&:=\inf\left\{t\geq0:\min_{j}|\Im w_{j,t}|<N^{-1}\right\}.
\end{align}

\begin{lemma}\label{lem:averagedStoppingTime}
Let $z\in\mbb{D}$, $m\in\mbb{N}$, $\mbf{w}_{0}\in(\mbb{C}\setminus\mbb{R})^{m}$ such that $|\Re w_{0,j}|\lesssim|\Im w_{0,j}|$, $\mbf{w}\in\sigma(\mbf{w}_{0})$ and assume that
\begin{align}
    |S^{av}_{0}(w_{1},B_{1},...,w_{m},B_{m})|&<N^{(m+a-1)\xi}\Psi^{av}(\eta,m,a),
\end{align}
with very high probability. Then, for any $0\leq t\leq \tau(\mbf{w}_{0})\wedge T(\mbf{w}_{0})$, we have
\begin{align}
    |S^{av}_{t}(w_{1},B_{1},...,w_{m},B_{m})|&<N^{(m+a-1)\xi}\Psi^{av}(\eta_{t},m,a),\label{eq:averagedStoppingTime}
\end{align}
with very high probability.
\end{lemma}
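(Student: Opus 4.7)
The plan is a standard It\^o / Gronwall bootstrap along the characteristic flow, in which the drift of $G_{1,t}B_{1}\cdots G_{m,t}B_{m}$ is, by construction of \eqref{eq:dw}, matched at leading order by the drift of $M_{t}(w_{1},B_{1},\ldots,w_{m})$, and everything else can be absorbed through the stopping-time definitions at the higher chain lengths $m'\leq M$.

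First, I would derive the SDE for $G_{j,t}$. Combining the OU dynamics \eqref{eq:matrixOU} with the ODE \eqref{eq:dw} gives
\begin{align*}
\mathrm{d}(W_{z_{t}}-w_{j,t})&=-\tfrac{1}{2}(W_{z_{t}}-w_{j,t})\mathrm{d}t+m_{t}\mathrm{d}t+\tfrac{1}{\sqrt{N}}\mathrm{d}\mbf{B}_{t},
\end{align*}
where $\mbf{B}_{t}$ is the natural $2N\times 2N$ Hermitisation of $B$. By It\^o's formula this produces
\begin{align*}
\mathrm{d}G_{j,t}&=G_{j,t}\bigl(\tfrac{1}{2}(W_{z_{t}}-w_{j,t})-m_{t}\bigr)G_{j,t}\mathrm{d}t-\tfrac{1}{\sqrt{N}}G_{j,t}\mathrm{d}\mbf{B}_{t}G_{j,t}+\tfrac{1}{N}G_{j,t}\mc{S}[G_{j,t}]G_{j,t}\mathrm{d}t,
\end{align*}
where $\mc{S}[\cdot]$ is the Ginibre Itô covariance, producing a block-constant (i.e. $\mbb{H}$-valued) matrix built from $\Tr{G_{j,t}E_{\pm}}$ and $\Tr{G_{j,t}F^{(*)}}$. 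The first drift, after using $(W_{z_{t}}-w_{j,t})G_{j,t}=1$, reduces to $\tfrac{1}{2}G_{j,t}-m_{t}G_{j,t}^{2}$, which is exactly what reproduces $\tfrac{1}{2}M_{t}$ at the deterministic level via \eqref{eq:dLambda} and $\mathrm{d}M_{t}=\tfrac{1}{2}M_{t}\mathrm{d}t$.

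Second, I would apply the product rule to $G_{1,t}B_{1}\cdots G_{m,t}B_{m}$, take the normalised trace, and subtract the ODE satisfied by $\Tr{M_{t}(w_{1},B_{1},\ldots,w_{m})B_{m}}$. The clean upshot is
\begin{align*}
\mathrm{d}S^{av}_{t}&=\tfrac{1}{2}S^{av}_{t}\mathrm{d}t+\mc{Q}_{t}\mathrm{d}t+\mathrm{d}\mathfrak{M}_{t},
\end{align*}
where $\mathfrak{M}_{t}$ is a martingale and $\mc{Q}_{t}$ is a finite sum of terms of the schematic form
\begin{align*}
\tfrac{1}{N}\Tr{G_{1,t}B_{1}\cdots G_{l,t}C_{\nu}G_{l,t}B_{l}\cdots G_{m,t}C_{\mu}G_{m,t}B_{m}},\qquad C_{\nu},C_{\mu}\in\{E_{+},E_{-},F,F^{*}\},
\end{align*}
coming from the Itô bracket between two copies of $G$; together with similar $2m$-resolvent objects coming from the recursion satisfied by $M_{t}$ (cf. \cite[Definition 4.1]{cipolloni_optimal_2024}), whose deterministic pieces cancel by the very choice of $M_{t}$. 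The key point is that each such term is a $(2m)$-resolvent chain, test matrices still in $\mbb{H}$, in which the number of $F/F^{*}$ factors equals $a$ or $a+1$.

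Third, I would control $\mc{Q}_{t}$ and $\langle\mathfrak{M}\rangle_{t}$ for $0\leq t\leq\tau_{z}(\mbf{w}_{0})\wedge T(\mbf{w}_{0})$. The chains appearing are $2m$-fold and hence bounded by $\Psi^{av}$ and $\Psi^{iso}$ at $m'\in\{2m-1,2m\}\leq M$, plus the deterministic parts from Lemma \ref{lem:deterministicBounds}. Using the stopping-time definitions, each such chain is at most $N^{(2m+a+2)\xi}$ times its deterministic size, giving
\begin{align*}
|\mc{Q}_{t}|&\lesssim \frac{N^{(2m+a+2)\xi}}{\eta_{t}^{m-a/2}}\cdot\Bigl(\frac{1}{\sqrt{N\eta_{t}}}+\frac{1}{q}\Bigr)^{2},
\end{align*}
and an analogous (square-rooted) bound on the bracket of $\mathfrak{M}_{t}$. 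Integrating in time and using the characteristic-flow identity \eqref{eq:integralBound} with $\alpha=m-a/2-1$ trades each $\eta_{s}^{-1}$ in the integrand for $\eta_{t}^{-1}$, showing
\begin{align*}
\int_{0}^{t}|\mc{Q}_{s}|\,\mathrm{d}s+\Bigl(\int_{0}^{t}\mathrm{d}\langle\mathfrak{M}\rangle_{s}\Bigr)^{1/2}&\prec N^{(m+a-1-c)\xi}\Psi^{av}(\eta_{t},m,a),
\end{align*}
for some small $c>0$, provided $\xi$ is chosen small relative to $\tau$. The linear drift $\tfrac{1}{2}S^{av}_{t}$ is handled by Gronwall. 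Combined with the hypothesis at $t=0$ (which beats the target bound by $N^{-\xi}$) and the BDG inequality for the martingale, this closes the bootstrap strictly below $N^{(m+a-1)\xi}\Psi^{av}(\eta_{t},m,a)$, yielding \eqref{eq:averagedStoppingTime} with very high probability.

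The main obstacle is the bookkeeping of $\mc{Q}_{t}$: each application of It\^o doubles the chain length and can insert an $E_{\pm}$ or an $F^{(*)}$ at an arbitrary position, so one must verify carefully that the increment in $F$-count $a$ and the increment in $m$ are consistent with the counting in $\Psi^{av}$, and that the hierarchical choice $N^{(m+a)\xi}$ of thresholds in $\tau^{av/iso}_{z}$ is monotone enough along this recursion to give a net gain of $N^{-c\xi}$. This is exactly why the thresholds grow in $m$ and $a$, and why both the $m\leq M/2$ averaged and $m\leq M$ isotropic stopping times are required simultaneously.
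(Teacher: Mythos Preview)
Your overall It\^o/Gronwall strategy is right, and the martingale part (whose quadratic variation really is a $2m$-resolvent chain) is handled as in the paper. However, the drift is mischaracterised in a way that hides the one genuinely delicate step.

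First, the It\^o drift of $\Tr{G_{1,t}B_{1}\cdots G_{m,t}B_{m}}$ is \emph{not} a $2m$-chain. For a trace, the It\^o bracket between $G_{p,t}$ and $G_{r,t}$ ``cuts the loop'' and produces a \emph{product} of two shorter traces,
\[
A_{p,r}(t)=\sum_{\nu=\pm}\nu\,\Tr{G_{p,t}B_{p}\cdots G_{r,t}E_{\nu}}\,\Tr{G_{r,t}B_{r}\cdots G_{m,t}B_{m}G_{1,t}B_{1}\cdots G_{p,t}E_{\nu}},
\]
with $(r-p+1)+(m-r+p+1)=m+2$ total resolvents, together with the diagonal terms $A_{p,p}=\Tr{G_{p,t}-M_{p,t}}\cdot\Tr{\cdots G_{p,t}^{2}\cdots}$. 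Your single-trace $(m{+}2)$-chain schematic does not match either of these, and your bound $|\mc{Q}_{t}|\lesssim N^{(2m+a+2)\xi}\eta_{t}^{-(m-a/2)}(\mc{E}^{iso}_{\eta_{t},q})^{2}$ does not control them.

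Second, and more importantly, not all $A_{p,r}$ can be made small. After writing $A_{p,r}$ as (deterministic)$\times S^{av}$ plus lower-order pieces, the borderline cases $r=p+1$ with $B_{p}=E_{\pm}$ (and by cyclicity $p=1,r=m$ with $B_{m}=E_{\pm}$) leave exactly
\[
\pm\Tr{M_{t}(w_{p},E_{\pm},w_{p+1})E_{\pm}}\cdot S^{av}_{t}(w_{1},B_{1},\ldots,w_{m},B_{m}),
\]
which is \emph{linear} in the same $S^{av}_{t}$ you are trying to bound. Thus the Gronwall coefficient is not $\tfrac{1}{2}$ but
\[
\phi(t)=\tfrac{m}{2}+\sum_{p:\,B_{p}=E_{\pm}}\pm\Tr{M_{t}(w_{p},E_{\pm},w_{p+1})E_{\pm}}.
\]
In general $\Tr{M_{t}(w_{p},E_{\pm},w_{p+1})E_{\pm}}$ is of size $\eta_{t}^{-1}$ (by \eqref{eq:m'}), so $\int_{0}^{t}|\phi(s)|\diff s$ diverges and the scheme fails. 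The paper fixes this by first reducing (via the resolvent identity, \eqref{eq:GE-}, and the algebraic identity \eqref{eq:G1FG2} for the special case $(m,a)=(2,1)$) to configurations where $B_{p}=E_{\pm}$ forces $\pm\Im w_{p}\Im w_{p+1}>0$; under that sign constraint $\Tr{M_{t}(w_{p},E_{\pm},w_{p+1})E_{\pm}}=O(1)$ and $\int_{0}^{t}|\phi|\lesssim1$, after which Gronwall closes. Your proposal lacks both this reduction and the enlarged linear term, so as written the argument does not go through.
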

\begin{proof}
If there is a $p\in[1,...,m]$ such that either $B_{p}=E_{+}$ and $\Im(w_{p})\Im(w_{p+1})<0$ or $B_{p}=E_{-}$ and $\Im(w_{p})\Im(w_{p+1})>0$, then by \eqref{eq:GE-} and the resolvent identity we have
\begin{align*}
    |S^{av}_{t}(w_{1},B_{1},...,w_{m},B_{m})|&\leq\frac{1}{|\Im w_{p,t}|+|\Im w_{p+1,t}|}\left(|S^{av}_{t}(w_{1},B_{1},...,\pm w_{p},B_{p+1},...,w_{m})|\right.\\
    &\left.+|S^{av}_{t}(w_{1},B_{1},...,w_{p-1},B_{p-1},w_{p+1},...,w_{m})|\right),
\end{align*}
and the terms on the right hand side contain $m-1$ resolvents. When $m=2$, $B_{1}\in\{F,F^{*}\}$ and $B_{2}=E_{\pm}$, we require a separate argument since we cannot afford the factor of $\eta^{-1}$ that comes from the resolvent identity. Consider $\Tr{G_{1}FG_{2}}$ where $\Im w_{1}\Im w_{2}<0$. Since $\Tr{A^{*}}=\overline{\Tr{A}}$ we can assume without loss that $0<\eta_{1}=\Im w_{1}\leq\eta_{2}=-\Im w_{2}$. Using $G_{1}=G_{2}^{*}+(w_{1}-\bar{w}_{2})G_{2}^{*}G_{1}$ we have
\begin{align*}
    \Tr{G_{1}FG_{2}}&=\Tr{(1+(w_{1}-\bar{w}_{2})G_{1})FG_{2}G_{2}^{*}}=\frac{w_{1}-\bar{w}_{2}}{2i\eta_{2}}\Tr{G_{1}F(G_{2}^{*}-G_{2})},
\end{align*}
and so
\begin{align}
    \left(1+\frac{w_{1}-\bar{w}_{2}}{2i\eta_{2}}\right)\Tr{G_{1}FG_{2}}&=\frac{w_{1}-\bar{w}_{2}}{2i\eta_{2}}\Tr{G_{1}FG_{2}^{*}}.\label{eq:G1FG2}
\end{align}
By our assumption that $\eta_{1}\leq\eta_{2}$ and $|\Re w_{j}|\lesssim|\Im w_{j}|$, we have
\begin{align*}
    \left|1+\frac{w_{1}-\bar{w}_{2}}{2i\eta_{2}}\right|&\geq c>0,\\
    \left|\frac{w_{1}-\bar{w}_{2}}{2i\eta_{2}}\right|&\leq C.
\end{align*}
Since the identity in \eqref{eq:G1FG2} is also true for the deterministic approximations, we have
\begin{align*}
    |S^{av}_{t}(w_{1},F,w_{2},E_{+})|&\lesssim|S^{av}_{t}(w_{1},F,\bar{w}_{2},E_{+})|,
\end{align*}
and on the right hand side the imaginary parts of the arguments have the same sign: $\Im w_{1}\Im \bar{w}_{2}>0$. Thus, we can assume the following:
\begin{equation*}
    \text{if $B_{p}=E_{+}$ then $\Im(w_{p})\Im(w_{p+1})>0$; if $B_{p}=E_{-}$ then $\Im(w_{p})\Im(w_{p+1})<0$.}\tag{\textasteriskcentered}\label{eq:signAssumption}
\end{equation*}
Here we consider addition modulo $m$, so for $p=m$ we have $p+1=1$. Due to this assumption, the fact that 
\begin{align*}
    \Tr{M_{t}(w_{p},E_{\pm},w_{p+1})E_{\mp}}&=0,
\end{align*}
and \eqref{eq:m'}, we have
\begin{align}
    |\Tr{M_{t}(w_{p},B_{p},w_{p+1})E_{\nu}}|&\lesssim1,\label{eq:signBound}
\end{align}
whenever $B_{p}=E_{\pm}$.

A second observation is that (see e.g. \cite[Eq. (5.4)]{cipolloni_optimal_2022})
\begin{align}
    |G_{z}(w)|&=\frac{2}{\pi}\int_{0}^{N^{L}}\frac{\Im G_{z}(w_{x})}{\eta_{x}}\diff x+O_{\prec}(N^{-D}),\label{eq:absoluteValue}
\end{align}
for any $D>0$ and sufficiently large $L>0$ depending on $D$, where $\eta_{x}:=\sqrt{\eta^{2}+x^{2}}$ and $w_{x}:=E+i\eta_{x}$. Moreover, we have
\begin{align*}
    \int_{0}^{N^{L}}\frac{\diff x}{\eta_{x}^{b/2}}&\prec\frac{1}{\eta^{b-1}},\qquad b\geq1.
\end{align*}
Thus resolvent chains with or without absolute values $|G_{j,t}|$ obey the same bounds. For any $x>0$, we can find a $w'\in\sigma(w)$ such that $\Im w'_{t}=\sqrt{(\Im w_{t})^{2}+x^{2}}$, which allows us to bound resolvent chains with absolute values using the definition of $\tau(\mbf{w}_{0})$.

Let $t\in[0,\tau(\mbf{w}_{0})\wedge T(\mbf{w}_{0})]$ and $\eta_{t}=\min_{j}|\Im w_{j,t}|$. By It\^{o}'s lemma and the chain rule we have
\begin{align}
    \diff S^{av}_{t}(w_{1},B_{1},...,w_{m},B_{m})&=\frac{m}{2}S^{av}_{t}(w_{1},B_{1},...,w_{m},B_{m})\diff t+\sum_{p\leq r}^{m}A_{p,r}(t)\diff t\nonumber\\
    &+\frac{1}{\sqrt{N}}\sum_{i,j=1}^{N}\partial_{ij}\bigl(S^{av}_{t}(w_{1},B_{1},...,w_{m},B_{m})\bigr)\diff B_{ij}\label{eq:dS}\\
    &+\frac{1}{\sqrt{N}}\sum_{i,j=1}^{N}\bar{\partial}_{ij}\bigl(S^{av}_{t}(w_{1},B_{1},...,w_{m},B_{m})\bigr)\diff \bar{B}_{ij},\nonumber
\end{align}
where
\begin{align}
    A_{p,p}(t)&=\Tr{G_{p,t}-M_{p,t}}\Tr{G_{1,t}B_{1}\cdots G^{2}_{p,t}B_{p}\cdots G_{m,t}B_{m}},\label{eq:App}
\end{align}
and, for $p<r$,
\begin{align}
    A_{p,r}(t)&=\sum_{\nu=\pm}\nu\Big[\Tr{G_{p,t}B_{p}\cdots G_{r,t}E_{\nu}}\Tr{G_{r,t}B_{r}\cdots G_{m,t}B_{m}G_{1,t}B_{1}\cdots G_{p,t}E_{\nu}}\nonumber\\
    &-\Tr{M_{t}(w_{p},B_{p},...,w_{r})E_{\nu}}\Tr{M_{t}(w_{r},B_{r},...,w_{m},B_{m},w_{1},B_{1},...,w_{p})E_{\nu}}\Big].\label{er:Apr}
\end{align}

Consider first $A_{1,1}$. By the single resolvent law we have
\begin{align*}
    \left|\Tr{G_{1,t}-M_{1,t}}\right|&\leq N^{\xi/2}\mc{E}^{av}_{\eta_{t},q},
\end{align*}
with very high probability. If $m=M$, then since $M$ is even we have
\begin{align*}
    |\Tr{G^{2}_{1,t}B_{1}\cdots G_{m,t}B_{m}}|&=\left|\Tr{(G_{1,t}B_{1}\cdots B_{m/2}G_{m/2+1,t}^{1/2})\cdot(G_{m/2+1,t}^{1/2}B_{m/2+1}\cdots G_{m,t}B_{m}G_{1,t})}\right|\\
    &\leq\frac{1}{\eta_{1,t}}\Tr{\Im(G_{1,t})B_{1}G_{2,t}\cdots B_{m/2}|G_{m/2+1,t}|B_{m/2}^{*}\cdots G_{2,t}^{*}B_{1}^{*}}^{1/2}\\
    &\times\Tr{\Im(G_{1,t})B_{m}^{*}G_{m,t}^{*}\cdots B_{m/2+1}^{*}|G_{m/2+1,t}|B_{m/2+1}\cdots G_{m,t}B_{m}}^{1/2}\\
    &\leq\frac{1}{\eta_{t}^{m-a/2}},
\end{align*}
where in the second line we used Cauchy--Schwarz and in the last line we used the integral representation in \eqref{eq:absoluteValue} and the definition of $\tau(\mbf{w}_{0})$ to bound the traces involving $m$ resolvents. If $m<M$, then from the definition of $\tau(\mbf{w}_{0})$ we have
\begin{align*}
    |\Tr{G^{2}_{1,t}B_{1}\cdots G_{m,t}B_{m}}|&\leq|\Tr{M_{t}(w_{1},E_{+},w_{1},B_{1},...,w_{m},B_{m})}|+N^{(m+a)\xi}\Psi^{av}(\eta_{t},m,a)\\
    &\lesssim\frac{1}{\eta_{t}^{m-a/2}}.
\end{align*}
Bounding $A_{p,p},\,p=1,...,m$ in the same way we obtain
\begin{align*}
    \int_{0}^{t}|A_{p,p}(s)|\diff s&\lesssim N^{\xi/2}\Psi^{av}(\eta_{t},m,a),
\end{align*}
where we have used the integral inequality in \eqref{eq:integralBound}.

Now consider $A_{p,r}$ for $p<r$, which we rewrite as follows:
\begin{equation}
\begin{aligned}
    A_{p,r}&=\sum_{\nu=\pm}\nu\Big[\Tr{M_{t}(w_{1},B_{1},...,w_{p},E_{\nu},w_{r},B_{r},...,w_{m})B_{m}}S^{av}_{t}(w_{p},B_{p},...,w_{r},E_{\nu})\\
    &+\Tr{M_{t}(w_{p},B_{p},...,w_{r})E_{\nu}}S^{av}_{t}(w_{1},B_{1},...,w_{p},E_{\nu},w_{r},B_{r},...,w_{m},B_{m})\\
    &+S^{av}_{t}(w_{p},B_{p},...,w_{r},E_{\nu})S^{av}_{t}(w_{1},B_{1},...,w_{p},E_{\nu},w_{r},B_{r},...,w_{m},B_{m})\Big]
\end{aligned}\label{eq:Apq2}
\end{equation}
From the definition of $\tau(\mbf{w}_{0})$ we have
\begin{align*}
    \frac{|S^{av}_{t}(w_{p},B_{p},...,w_{r},E_{\nu})|}{\Psi^{av}(\eta_{t},r-p+1,a_{p,r})}&\leq N^{(r-p+1+a_{p,r})\xi},\\
    \frac{|S^{av}_{t}(w_{1},B_{1},...,w_{p},E_{\nu},w_{r},B_{r},...,w_{m},B_{m})|}{\Psi^{av}(\eta_{t},m-r+p+1,a-a_{p,r})}&\leq N^{(m-r+p+1+a-a_{p,r})\xi},
\end{align*}
where $a_{p,r}$ is the number of $B_{j}\in\{F,F^{*}\}$ for $j=p,...,r-1$. Since $0\leq a_{p,r}\leq a$, if either: i) $r-p\in[2,3,...,m-2]$; ii) $r=p+1$ and $a_{p,r}=1$; iii) $p=1,\,r=m$ and $a_{p,r}=a-1$, we have
\begin{align*}
    r-p+1+a_{p,r}&\leq m+a-1,\\
    m-r+p+1+a-a_{p,r}&\leq m+a-1,
\end{align*}
and so
\begin{align*}
    \int_{0}^{t}|A_{p,r}(s)|\diff s&\lesssim N^{(m+a-1)\xi}\Psi^{av}(\eta_{t},m,a).
\end{align*}
This leaves the cases $p=1,\,r=m$ and $a_{p,r}=a$ or $r=p+1,\,p=2,...,m-1$ and $a_{p,r}=0$, where the relevant terms are those in the first and second lines respectively on the right hand side of \eqref{eq:Apq2} (the term in the last line is bounded by $\bigl(N^{(m+a)\xi}\Psi^{av}\bigr)^{2}\lesssim N^{(m+a-1)\xi}\Psi^{av}$). Note that $a_{p,p+1}=0$ and $a_{1,m}=a$ mean that $B_{p}=E_{\pm}$ and $B_{m}=E_{\pm}$ respectively. Since $\Tr{M_{t}(w_{1},E_{\pm},w_{2})E_{\mp}}=0$, these terms are
\begin{align*}
    \left(\sum_{p:B_{p}=E_{\pm}}\pm\Tr{M_{t}(w_{p},B_{p},w_{p+1})B_{p}}\right)S^{av}_{t}(w_{1},B_{1},...,w_{m},B_{m}),
\end{align*}
where the $\pm$ sign is chosen according to $B_{p}=E_{\pm}$. Observe that $S^{av}_{t}(w_{1},B_{1},...,w_{m},B_{m})$ on the right hand side is the same quantity whose derivative we have computed.

For the stochastic term we bound the quadratic variation by
\begin{align*}
    &\frac{1}{N}\sum_{p=1}^{m}\sum_{i,j=1}^{N}\left|\Tr{G_{1,t}B_{1}\cdots G_{p,t}\mbf{e}_{i}\mbf{e}_{\hat{\jmath}}^{*}G_{p,t}B_{p}\cdots G_{m,t}B_{m}}\right|^{2}\\
    &\leq\frac{1}{N^{2}\eta_{p,t}^{2}}\sum_{p=1}^{m}\Tr{\Im G_{p,t}B_{p}\cdots G_{m,t}B_{m}G_{1,t}B_{1}\cdots\Im G_{p,t}B_{p-1}^{*}\cdots G_{1,t}^{*}B_{m}^{*}G_{m,t}^{*}B_{m-1}^{*}\cdots G_{p+1,t}^{*}B_{p}^{*}}.
\end{align*}
The trace in the last line involves an alternating product of $2m$ resolvents. If $m\leq M/2$ then we can directly estimate this using the definition of $\tau(\mbf{w}_{0})$ to obtain the bound
\begin{align*}
    \frac{1}{N^{2}\eta_{p,t}^{2}}\cdot\frac{1}{\eta^{2m-a-1}_{t}}&\prec\frac{1}{N^{2}\eta^{2m-a+1}_{t}}.
\end{align*}
Integrating over $t$ and using the Burkholder--Davis--Gundy (BDG) inequality we obtain
\begin{align*}
    \left|\int_{0}^{t}\frac{1}{\sqrt{N}}\sum_{i,j=1}^{N}\partial_{ij}\bigl(S^{av}_{s}(w_{1},B_{1},...,w_{m},B_{m})\bigr)\diff B_{ij}(s)\right|&\leq N^{\xi}\left(\int_{0}^{t}\frac{1}{N^{2}\eta_{s}^{2m-a+1}}\diff s\right)^{1/2}\\
    &\leq\frac{N^{\xi}}{N\eta_{t}}\cdot\frac{1}{\eta^{m-a/2-1}_{t}}\\
    &\leq N^{\xi}\Psi^{av}(\eta_{t},m,a),
\end{align*}
with very high probability.

If $m>M/2$, we reduce the resolvent chain to chains of $m$ resolvents using the following argument based on the spectral decomposition of $G_{z}(w)$ (see \cite[Eq. (5.8)-(5.10)]{cipolloni_optimal_2022}). For simplicity we drop the subscripts $j$ from $G_{j}$ and $B_{j}$. If $m$ is even then
\begin{align*}
    \left|\Tr{(GB)^{2m}}\right|&=\left|\Tr{(GB)^{m/2-1}GB(GB)^{m/2-1}GB(GB)^{m/2-1}GB(GB)^{m/2-1}GB}\right|\\
    &\leq\frac{1}{2N}\sum\frac{|\mbf{w}_{i}^{*}B(GB)^{m/2-1}\mbf{w}_{j}\mbf{w}_{j}^{*}B(GB)^{m/2-1}\mbf{w}_{k}\mbf{w}_{k}^{*}B(GB)^{m/2-1}\mbf{w}_{l}\mbf{w}_{l}^{*}B(GB)^{m/2-1}\mbf{w}_{i}|}{|\lambda_{i}-w||\lambda_{j}-w||\lambda_{k}-w||\lambda_{l}-w|}\\
    &\lesssim\frac{1}{2N}\sum\frac{|\mbf{w}_{i}^{*}B(GB)^{m/2-1}\mbf{w}_{j}|^{2}|\mbf{w}_{k}^{*}B(GB)^{m/2-1}\mbf{w}_{l}|^{2}}{|\lambda_{i}-w||\lambda_{j}-w||\lambda_{k}-w||\lambda_{l}-w|}\\
    &\lesssim N\Tr{|G|B(GB)^{m/2-1}|G|(B^{*}G^{*})^{m/2-1}B^{*}}^{2}.
\end{align*}
If $m$ is odd we obtain instead
\begin{align*}
    \left|\Tr{(GB)^{2m}}\right|&\lesssim N\Tr{|G|B(GB)^{(m-1)/2}|G|(B^{*}G^{*})^{(m-1)/2}B^{*}}\Tr{|G|B(GB)^{(m-3)/2}|G|(B^{*}G^{*})^{(m-3)/2}B^{*}}.
\end{align*}
The first factor contains $m+1\leq M$ resolvents since $m\leq M-1$ when $m$ is odd. In either case we obtain the bound $N^{-1}\eta_{t}^{-2m+a}$ for the quadratic variation. By the BDG inequality we obtain
\begin{align*}
    \left|\int_{0}^{t}\frac{1}{\sqrt{N}}\sum_{i,j=1}^{N}\partial_{ij}\bigl(S^{av}_{s}(w_{1},B_{1},...,w_{m},B_{m})\bigr)\diff B_{ij}(s)\right|&\leq N^{\xi}\left(\int_{0}^{t}\frac{1}{N\eta_{s}^{2m-a}}\diff s\right)^{1/2}\\
    &\leq\frac{N^{\xi}}{\sqrt{N\eta_{t}}}\cdot\frac{1}{\eta^{m-a/2-1}_{t}}\\
    &\leq N^{\xi}\Psi^{av}(\eta_{t},m,a),
\end{align*}
with very high probability. 

Altogether we have shown that
\begin{align*}
    \diff S^{av}_{t}(w_{1},B_{1},...,w_{m},B_{m})&=\phi(t)S^{av}_{t}(w_{1},B_{1},...,w_{m},B_{m})\diff t+R(t)\diff t,
\end{align*}
where
\begin{align*}
    \phi(t)&=\frac{m}{2}+\sum_{p:B_{p}=E_{\pm}}\pm\Tr{M_{t}(w_{p},B_{p},w_{p+1})B_{p}}
\end{align*}
and
\begin{align*}
    \int_{0}^{t}|R(s)|\diff s&\lesssim\frac{N^{(m+a-1)\xi}\mc{E}^{av}_{\eta_{t},q}}{\eta_{t}^{m-a/2-1}}.
\end{align*}
By \eqref{eq:signBound} we have
\begin{align*}
    \int_{0}^{t}|\phi(s)|\diff s&\lesssim1,
\end{align*}
and so by Gr\"{o}nwall's inequality and the assumption on $|S^{av}_{0}(w_{1},B_{1},...,w_{m},B_{m})|$ we obtain \eqref{eq:averagedStoppingTime}.
\end{proof}

For the entry-wise stopping time $\tau^{iso}$, we have
\begin{lemma}\label{lem:entry-wiseStoppingTime}
Let $z\in\mbb{D}$, $m\in\mbb{N}$, $\mbf{w}_{0}\in(\mbb{C}\setminus\mbb{R})^{m+1}$, $\mbf{w}\in\sigma(\mbf{w}_{0})$ and assume that
\begin{align}
    |S^{iso}_{\mathfrak{i},\mathfrak{j},0}(w_{1},B_{1},...,w_{m+1})|&<N^{(m+1+a)\xi}\Psi^{iso}(\eta,m,a).
\end{align}
Then, for any $0\leq t\leq \tau(\mbf{w}_{0})\wedge T(\mbf{w}_{0})$, we have
\begin{align}
    |S^{iso}_{\mathfrak{i},\mathfrak{j},t}(w_{1},B_{1},...,w_{m+1})|&\lesssim N^{(m+1+a)\xi}\Psi^{iso}(\eta_{t},m,a).
\end{align}
\end{lemma}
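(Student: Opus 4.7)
The plan is to mirror the averaged stopping-time lemma almost verbatim, simply replacing normalised traces by entries throughout. First, as in the averaged case, I reduce to the sign-compatibility assumption $(\ast)$: whenever $B_p=E_\pm$ appears between arguments $w_p,w_{p+1}$ with the ``wrong'' sign of imaginary parts, I use $G_z(w)E_-=-E_-G_z(-w)$ together with the resolvent identity to reduce to a chain with one fewer resolvent, and handle the anomalous two-resolvent case exactly as in \eqref{eq:G1FG2}. Next, by It\^o's formula applied to $S^{iso}_{\mathfrak{i},\mathfrak{j},t}$ and using the self-consistent identities \eqref{eq:self1}--\eqref{eq:self4} together with $\diff M_t=\tfrac12 M_t\diff t$, one obtains
\begin{align*}
    \diff S^{iso}_{\mathfrak{i},\mathfrak{j},t}(w_1,B_1,\dots,w_{m+1}) &=\frac{m+1}{2}S^{iso}_{\mathfrak{i},\mathfrak{j},t}\diff t+\sum_{1\leq p\leq r\leq m+1}A^{iso}_{p,r}(t)\diff t+\diff\mc{M}_t,
\end{align*}
where $A^{iso}_{p,r}$ is the entry-wise analogue of \eqref{er:Apr}: a product of an inserted trace $\Tr{G_{p,t}B_p\cdots G_{r,t}E_\nu}$ (or its $M$-counterpart) with the entry $(G_{1,t}B_1\cdots E_\nu G_{r,t}\cdots G_{m+1,t})_{\mathfrak{i},\mathfrak{j}}$, and $\mc{M}_t$ is the martingale coming from $\diff B_{ij}(t)$ and its conjugate.

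Each $A^{iso}_{p,r}$ I split into three pieces in the style of \eqref{eq:Apq2}: $M\cdot S^{iso}+S^{av}\cdot (\text{entry of }M)+S^{av}\cdot S^{iso}$. Apart from the resonant cases (namely $p=r$, or $r=p+1$ with $B_p=E_\pm$, or $p=1,r=m+1$ with all relevant $B_j\in\{E_\pm\}$), each of the resulting factors is either a trace on fewer than $m$ resolvents or an entry on fewer than $m+1$ resolvents, so the definition of $\tau(\mbf{w}_0)$ controls them by a power of $N^{(m'+a')\xi}$ with $m'+a'\leq m+a+1$; combined with $\int_0^t\eta_s^{-\alpha-1}\diff s\lesssim\eta_t^{-\alpha}$ from \eqref{eq:integralBound}, this yields a total contribution of order $N^{(m+a+1)\xi}\Psi^{iso}(\eta_t,m,a)$. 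The resonant terms collect, exactly as in the averaged proof and using \eqref{eq:signBound}, into a factor
\begin{align*}
    \phi(t)S^{iso}_{\mathfrak{i},\mathfrak{j},t},\qquad \phi(t)=\frac{m+1}{2}+\sum_{p:B_p=E_\pm}(\pm)\Tr{M_t(w_p,B_p,w_{p+1})B_p},
\end{align*}
with $\int_0^t|\phi(s)|\diff s\lesssim 1$.

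For the martingale, the quadratic variation takes the schematic form
\begin{align*}
    \frac{1}{N}\sum_{p=1}^{m+1}\sum_{i,j}\bigl|(G_{1,t}B_1\cdots G_{p,t}\mbf{e}_i\mbf{e}_{\hat\jmath}^*G_{p,t}B_p\cdots G_{m+1,t})_{\mathfrak{i},\mathfrak{j}}\bigr|^2,
\end{align*}
which after summing over $i,j$ and using $GG^*=\eta^{-1}\Im G$ reduces to $\tfrac{1}{N\eta_t^2}$ times a product of two diagonal entries $(( GB)^{p-1}\Im G(B^*G^*)^{p-1})_{\mathfrak{i},\mathfrak{i}}\cdot((B^*G^*)^{m+1-p}\Im G(GB)^{m+1-p})_{\mathfrak{j},\mathfrak{j}}$. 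When $m\leq M/2$ these diagonal entries involve at most $M$ resolvents each and are bounded by $\eta_t^{-(m-a/2)}$ via the definition of $\tau^{iso}$, so BDG produces $|\mc{M}_t|\prec N^\xi(N\eta_t)^{-1/2}\eta_t^{-(m-a/2)}=N^\xi\Psi^{iso}(\eta_t,m,a)$. When $m>M/2$ the diagonal entry sits on a chain of length $>M$, and the main technical obstacle is to reduce it to chains of length $\leq M$. This I do by the same spectral-decomposition argument used for the averaged law: expand each diagonal entry as a sum over eigenvectors, apply Cauchy--Schwarz to split into two isotropic expressions of half the length, and pay a factor of $N$ which is exactly compensated by the weaker target $\eta_t^{-(m-a/2)}$ for $m>M/2$ in the definition of $\Psi^{iso}$. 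Collecting drift and martingale gives $\diff S^{iso}_{\mathfrak{i},\mathfrak{j},t}=\phi(t)S^{iso}_{\mathfrak{i},\mathfrak{j},t}\diff t+R(t)\diff t$ with $\int_0^t|R(s)|\diff s\prec N^{(m+a+1)\xi}\Psi^{iso}(\eta_t,m,a)$, and Gr\"onwall together with the hypothesised initial bound yields the claim. The main obstacle, exactly as in Lemma \ref{lem:averagedStoppingTime}, is producing an $M$-bounded estimate for the quadratic variation when $m>M/2$; everything else is bookkeeping in the $N^\xi$-exponents to ensure that the $N^{(m+1+a)\xi}$ budget in the definition of $\tau^{iso}$ is not exceeded.
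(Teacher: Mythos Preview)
Your overall strategy is right and tracks the paper's proof closely, but your classification of the exceptional drift terms is off, and this creates a genuine gap at the top level $m=M$.

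You list three ``resonant'' cases---$p=r$, $r=p+1$ with $B_p=E_\pm$, and $p=1,r=m+1$---and then say they all collect into $\phi(t)S^{iso}_t$. In fact only the middle one does. The other two are not resonant; they are the boundary cases where one of the two factors carries \emph{too many} resolvents for the stopping time to see. Concretely, $A_{p,p}=\Tr{G_{p,t}-M_{p,t}}\cdot(G_{1,t}B_1\cdots G_{p,t}^2\cdots G_{m+1,t})_{\mathfrak i,\mathfrak j}$, so the entry factor has $m+2$ resolvents; and $A_{1,m+1}$, \emph{regardless} of the $B_j$'s, has averaged factor $S^{av}_t(w_1,B_1,\ldots,w_{m+1},E_\nu)$ with $m+1$ resolvents. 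Your blanket claim that every non-resonant factor is ``a trace on fewer than $m$ resolvents or an entry on fewer than $m+1$ resolvents'' is therefore false at the two endpoints $r-p\in\{0,m\}$.

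For $m\le M-1$ both of these are still inside the range of $\tau(\mbf w_0)$ and you are fine. At $m=M$ neither is, and the paper resolves each by the same Cauchy--Schwarz/spectral-decomposition trick you correctly invoke for the quadratic variation: split the overlong entry $(G_1B_1\cdots G_p^2\cdots G_{m+1})_{\mathfrak i,\mathfrak j}$ into two diagonal pieces of lengths $2(p-1)$ and $2(m-p+1)$ and spectrally decompose the longer one (paying $N^{1/2}$, absorbed by the weaker $\Psi^{iso}$ for $m>M/2$); and bound the trace $\Tr{G(GB)^M}$ by Cauchy--Schwarz as $\eta^{-1}\Tr{\Im G\,B(GB)^{M/2-1}|G|(B^*G^*)^{M/2-1}B^*}$, which sits back inside $\tau$. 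So the fix is exactly the device you already use for $\mc M_t$; you just need to apply it to $A_{p,p}$ and $A_{1,m+1}$ at $m=M$ as well, and drop the claim that these feed into $\phi(t)S^{iso}_t$.
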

We postpone the proof to Appendix \ref{app:entry-wiseStoppingTime} since it is very similar to the preceeding proof (indeed, it more or less amounts to the replacement of $B_{m}$ with $N\mbf{e}_{\mathfrak{j}}\mbf{e}^{*}_{i}$). Combining Lemmas \ref{lem:averagedStoppingTime} and \ref{lem:entry-wiseStoppingTime}, we conclude the following, which is the essence of the ``zig" step.
\begin{proposition}\label{prop:zig}
For $t>0$ let $X(t)$ be the solution to \eqref{eq:matrixOU}. Let $z\in\mbb{D}$, $\mbf{w}_{0}\in(\mbb{C}\setminus\mbb{R})^{m+1}$ and $\mbf{w}\in\sigma(\mbf{w}_{0})$. If 
\begin{align}
    \max_{B_{j}\in\mbb{H}}\sup_{\mbf{w}'\in\sigma(\mbf{w})}\frac{|S^{av}_{0}(w_{1},B_{1},...,w_{m},B_{m})|}{\Psi^{av}(\eta,m,a)}<N^{(m+a-1)\xi},\\
    \max_{B_{j}\in\mbb{H}}\sup_{\mbf{w}'\in\sigma(\mbf{w})}\frac{|S^{iso}_{0}(w_{1},B_{1},...,w_{m+1})|}{\Psi^{iso}(\eta,m,a)}<N^{(m+a)\xi},
\end{align}
for each $m\in[M]$, then $\tau_{z}(\mbf{w}_{0})\geq T(\mbf{w}_{0})$.
\end{proposition}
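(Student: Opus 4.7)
The plan is to argue by contradiction, exploiting the $N^{\xi}$ gap between the thresholds in the hypothesis ($N^{(m+a-1)\xi}$ averaged, $N^{(m+a)\xi}$ isotropic) and those defining $\tau^{av}_{z}$ and $\tau^{iso}_{z}$ ($N^{(m+a)\xi}$ and $N^{(m+2+a)\xi}$ respectively). This gap is exactly what Lemmas \ref{lem:averagedStoppingTime} and \ref{lem:entry-wiseStoppingTime} consume when propagating the initial strict bounds to all $t \leq \tau_{z}(\mbf{w}_{0}) \wedge T(\mbf{w}_{0})$, after which a continuity argument closes the stopping-time bootstrap.

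For a fixed $\mbf{w}' \in \sigma(\mbf{w}_{0})$, any $m \in [M]$, and any $B_{j} \in \mbb{H}$, the monotonicity of $\sigma$ pointed out after its definition gives $\sigma(\mbf{w}') \subseteq \sigma(\mbf{w}_{0})$ and hence $\tau_{z}(\mbf{w}') \geq \tau_{z}(\mbf{w}_{0})$. The hypotheses of Lemmas \ref{lem:averagedStoppingTime} and \ref{lem:entry-wiseStoppingTime} are therefore satisfied with $\mbf{w}'$ playing the role of the lemma's $\mbf{w}$. Applying these lemmas yields, with very high probability,
\[
|S^{av}_{t}(w'_{1},B_{1},\ldots,w'_{m},B_{m})| < N^{(m+a-1)\xi}\,\Psi^{av}(\eta_{t},m,a),
\]
\[
|S^{iso}_{\mathfrak{i},\mathfrak{j},t}(w'_{1},B_{1},\ldots,w'_{m+1})| \lesssim N^{(m+1+a)\xi}\,\Psi^{iso}(\eta_{t},m,a),
\]
uniformly for $t \in [0, \tau_{z}(\mbf{w}_{0}) \wedge T(\mbf{w}_{0})]$.

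To upgrade this pointwise-in-$\mbf{w}'$ statement to the suprema over $\sigma(\mbf{w}_{0})$ that enter the definitions of $\tau^{av/iso}_{z}$, discretise $\sigma(\mbf{w}_{0}) \cap \{|\Im w'_{j}| \leq 10\}$ on a grid of mesh $N^{-C}$ for $C$ large. On $\{t < T(\mbf{w}_{0})\}$ the a priori bound $\|G_{j,t}\| \leq |\Im w_{j,t}|^{-1} \leq N$ renders $\mbf{w}' \mapsto S^{av/iso}_{t}$ Lipschitz with polynomial-in-$N$ constant, so the interpolation error is negligible. A union bound over this polynomial grid, the four matrices $B_{j} \in \{E_{+}, E_{-}, F, F^{*}\}$, the values $m \in [M]$, and the $(2N)^{2}$ index pairs preserves the very-high-probability statement. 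Continuity of $t \mapsto G_{j,t}$ up to $T(\mbf{w}_{0})$ then forces a contradiction on the event $\{\tau_{z}(\mbf{w}_{0}) < T(\mbf{w}_{0})\}$: some normalised ratio would have to saturate its threshold $N^{(m+a)\xi}$ or $N^{(m+2+a)\xi}$ at $t = \tau_{z}(\mbf{w}_{0})$, whereas the propagated bounds beat these thresholds by a factor of $N^{\xi}$. Hence $\tau_{z}(\mbf{w}_{0}) \geq T(\mbf{w}_{0})$ with very high probability.

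The substantive work is entirely absorbed into Lemmas \ref{lem:averagedStoppingTime}--\ref{lem:entry-wiseStoppingTime}, so the main obstacle at this stage is the uniformity step: verifying that the pointwise-in-$\mbf{w}'$ propagation survives the passage to a supremum. This is routine once one notes that the a priori norm bound on resolvents on $\{t < T(\mbf{w}_{0})\}$ is polynomial in $N$ and that the index union bound merely contributes a factor $(2N)^{2}$ which is absorbed into $\prec$; both are standard features of zigzag arguments and do not affect any exponents.
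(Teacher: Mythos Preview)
Your proposal is correct and matches the paper's approach: the paper simply states the proposition as an immediate consequence of ``combining Lemmas \ref{lem:averagedStoppingTime} and \ref{lem:entry-wiseStoppingTime}'', and the continuity/bootstrap argument you spell out (exploiting the $N^{\xi}$ gap between the propagated bounds and the stopping-time thresholds, plus a standard grid discretisation and union bound) is precisely the intended way to combine them. Your observation about $\sigma(\mbf{w}')\subseteq\sigma(\mbf{w}_{0})$ is correct but not strictly needed, since Lemmas \ref{lem:averagedStoppingTime} and \ref{lem:entry-wiseStoppingTime} are already formulated for arbitrary $\mbf{w}\in\sigma(\mbf{w}_{0})$ with the stopping time $\tau(\mbf{w}_{0})$ fixed.
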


\subsection{Green's Function Comparison}\label{sec:zag}
In this step we remove the Gaussian component by a Green's function comparison argument. The matrix $X(t)$ follows the same dynamics \eqref{eq:matrixOU} as before but now the spectral parameters are time-independent; we redefine $S^{av/iso}_{t}$ accordingly. Let $\delta,\tau>0$ and recall the definition \eqref{eq:domain} of $\mc{D}(\delta,\tau)$. For $p>0$ define 
\begin{align}
    \Omega^{iso}_{t}(m,p)&:=\max_{\mathfrak{i},\mathfrak{j}\in[2N]}\max_{B_{j}\in\mbb{H}}\sup_{w_{j}\in\mc{D}(\delta,\tau)}\mbb{E}|S^{iso}_{\mathfrak{i},\mathfrak{j},t}(w_{1},B_{1},...,w_{m+1})|^{2p},\\
    \Omega^{av}_{t}(m,p)&:=\max_{B_{j}\in\mbb{H}}\sup_{w_{j}\in\mc{D}(\delta,\tau)}\mbb{E}|S^{av}_{t}(w_{1},B_{1},...,w_{m},B_{m})|^{2p}.
\end{align}

The main ingredients are the bounds on the time derivative of high moments of $S^{iso/av}$ in the following two lemmas.
\begin{lemma}\label{lem:isoGFT}
Let $z\in\mbb{D}$, $m\in\mbb{N}$, $t\geq0$ and $S:=S^{iso}_{\mathfrak{x},\mathfrak{y},t}(w_{1},B_{1},...,w_{m+1})$. Assume that $\mc{L}^{iso}_{z}(l,\delta,\tau)$ is true for $l=0,...,m-1$. Then for any fixed $p\in\mbb{N}$
\begin{align}
    \left|\frac{\diff}{\diff t}\mbb{E}|S|^{2p}\right|&\prec\left(1+\frac{1}{q\eta}\right)\left(\left(\Psi^{iso}(\eta,m,a)\right)^{2p}+\Omega^{iso}_{t}(m,p)\right).\label{eq:isoGFT}
\end{align}
uniformly in $\mc{D}(\delta,\tau)$, where $\eta=\min_{j}|\Im w_{j}|$.
\end{lemma}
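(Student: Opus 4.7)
The plan is to use Itô's formula together with the cumulant expansion stated at the start of Section \ref{sec3}, arranging matters so that the Gaussian part of the OU dynamics is cancelled by the renormalization $\underline{\,\cdot\,}$ and only cumulants of $X_{ij}$ of order $\geq 3$ contribute. These carry explicit factors of $q^{-(r+s-1)}$ and are responsible for the $\tfrac{1}{q\eta}$ prefactor appearing in \eqref{eq:isoGFT}.

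First I would differentiate $\mbb{E}|S|^{2p}$ in $t$ using \eqref{eq:matrixOU} and the resolvent identity $\diff G_{j,t} = -G_{j,t}\,\diff X\, G_{j,t}$ plus the appropriate Itô correction. The OU drift $-\tfrac12 X\diff t$ together with the second-order Itô term combine to produce expressions of the form $\mbb{E}\bigl[\underline{X_{ij}\,\partial_{ij}P(G)}\cdot g\bigr]$, where $P(G)$ is a resolvent polynomial extracted from the chain defining $S$, and $g$ is built from the remaining $2p-1$ copies of $S$ and $\bar S$. The cumulant expansion annihilates the second-cumulant (Gaussian) term by definition of the underline, leaving only contributions indexed by $r+s\geq 2$, each prefactored by $\tfrac{\kappa_{r+1,s}}{N q^{r+s-1}}$.

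Next I would organise the remaining terms in the same style as the proof of Lemma \ref{lem:global}, classifying them by the number $d$ of copies of $S$ (and $\bar S$) on which the $r+s$ derivatives fall, and by the number of off-diagonal occurrences of the summation indices $i,j$ surviving after differentiation. For each class I would use the inductive hypothesis $\mc{L}^{iso}_{z}(l,\delta,\tau)$ for $l\leq m-1$ to bound resolvent sub-chains of length strictly less than $m$ in terms of $\Psi^{iso}(\eta,l,a')$; use the spectral identity \eqref{eq:absoluteValue} together with Cauchy--Schwarz to reduce any resolvent chains of length $>m$ back to length $\leq m$, at a controlled cost in powers of $\eta^{-1}$; and use Hölder's inequality to convert leftover factors $|S|^{2p-d}$ into $\Omega^{iso}_{t}(m,p)^{(2p-d)/(2p)}$, which after Young's inequality is absorbed into the right-hand side $(\Psi^{iso}(\eta,m,a))^{2p}+\Omega^{iso}_{t}(m,p)$.

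The dominant contribution is the $r+s=2$ term with one derivative on $P(G)$ and one on $S$: the cumulant supplies $\tfrac{1}{Nq}$, the sum over $(i,j)$ (together with any surviving free index) contributes enough powers of $N$ to cancel the $\tfrac{1}{N}$, and the derivative on $P(G)$ produces one factor of $G^{2}$ estimated via $|G^{2}|\leq \eta^{-1}\Im G$, yielding the $\tfrac{1}{q\eta}$ of \eqref{eq:isoGFT}. All $r+s\geq 3$ terms are strictly smaller because each extra $q^{-1}$ outweighs the accompanying $\eta^{-1}$ under $q=N^{\epsilon}$ and $\eta\geq N^{-1+\tau}$. The main obstacle I anticipate is controlling the terms where differentiation grows a resolvent chain beyond length $m+1$, which lies outside the induction hypothesis $\mc{L}^{iso}_{z}(l,\delta,\tau)$ for $l\leq m-1$; these must be reduced using \eqref{eq:absoluteValue} together with Cauchy--Schwarz, mirroring the argument in the proof of Lemma \ref{lem:averagedStoppingTime}, and the bookkeeping becomes delicate whenever several such long chains are produced simultaneously within a single higher-order cumulant term.
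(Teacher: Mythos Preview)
Your starting point (Itô plus cumulant expansion, Gaussian part cancelled, organising by the number $d$ of copies of $S$ hit) is correct, and for $d\geq 2$ your sketch is roughly right. The gap is in the $d=1$ terms, where all $r+s+1\geq 3$ derivatives act on a single copy of $S$. The critical case (the paper's Case~i) has exactly one off-diagonal single-resolvent entry $G_{\mathfrak{i},\mathfrak{j}}$ together with one length-$(m{+}1)$ entry $((GF)^mG)_{\mathfrak{k},\mathfrak{l}}$ sharing a summation index, the rest being diagonal. Your proposed fix---Cauchy--Schwarz plus the reduction via \eqref{eq:absoluteValue} as in Lemma~\ref{lem:averagedStoppingTime}---fails here: Cauchy--Schwarz in the shared index produces an entry of $(G^*F^*)^m\Im G(FG)^m$ of length $2m{+}1$, outside the induction hypothesis, and the reduction inequality costs an extra $N^{1/2}$, which is unaffordable in the sparse regime. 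This is exactly the ``closure problem'' flagged in the introduction. Relatedly, your assertion that ``each extra $q^{-1}$ outweighs the accompanying $\eta^{-1}$'' is false: with $q=N^{\epsilon}$ and $\eta\geq N^{-1+\tau}$ one has $(q\eta)^{-1}\leq N^{1-\epsilon-\tau}$, potentially large, which is why \eqref{eq:isoGFT} carries the factor $(1+\tfrac{1}{q\eta})$ and why a single expansion does not close.

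The paper's resolution is an \emph{iterated} cumulant expansion: replace $G_{\mathfrak{i},\mathfrak{j}}$ (or a diagonal factor sharing its index) via \eqref{eq:self2}, giving an $\hat M$-term that collapses one sum plus an underlined remainder; then perform a \emph{second} cumulant expansion on the underlined term, introducing a fresh summation index not shared with $((GF)^mG)_{\mathfrak{k},\mathfrak{l}}$. The second-cumulant part of this nested expansion now gains $(N\eta)^{-1/2}$ by Ward; higher cumulants either fall into $\mc{P}^{(1)}\cup\mc{P}^{(2)}$ (bounded by Lemmas~\ref{lem:polynomialBound2}--\ref{lem:polynomialBound3}) or are iterated again until enough powers of $q^{-1}$ accumulate to afford the naive bound. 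Case~ii) additionally requires the fluctuation-averaging bound of Lemma~\ref{lem:fluctuationAveraging}. These ingredients---the polynomial classification of Definition~\ref{def:polynomials}, Lemmas~\ref{lem:polynomialBound1}--\ref{lem:polynomialBound3}, and Lemma~\ref{lem:fluctuationAveraging}---are the core of the argument and are absent from your proposal.
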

\begin{lemma}\label{lem:avGFT}
Let $z\in\mbb{D}$, $m\in\mbb{N}$, $t\geq0$ and $S:=S^{av}_{t}(w_{1},B_{1},...,w_{m},B_{m})$. Assume that $\mc{L}^{iso}_{z}(l,\delta,\tau)$ is true for $l=0,...,m$. Then for any fixed $p\in\mbb{N}$
\begin{align}   
    \left|\frac{\diff}{\diff t}\mbb{E}|S|^{2p}\right|&\prec\left(1+\frac{1}{q\eta}\right)\left(\left(\Psi^{av}(\eta,m,a)\right)^{2p}+\Omega^{av}_{t}(m,p)\right),\label{eq:avGFT}
\end{align}
uniformly in $\mc{D}(\delta,\tau)$, where $\eta=\min_{j}|\Im w_{j}|$.
\end{lemma}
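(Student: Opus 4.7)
The approach follows the moment-method cumulant-expansion strategy used in Lemma \ref{lem:isoGFT} and the global law Lemma \ref{lem:global}, adapted to the averaged quantity. First, by It\^o's formula applied to $|S|^{2p}$ along \eqref{eq:matrixOU}, and using the renormalisation $\underline{\,\cdot\,}$ to absorb the Gaussian second-cumulant cross-term (which, by $\mbb{E}|X_{ij}|^{2}=1/N$ together with $\mbb{E}X_{ij}=\mbb{E}X_{ij}^{2}=0$, cancels the It\^o correction $\tfrac{1}{N}\sum_{i,j}\mbb{E}\partial_{ij}\bar\partial_{ij}|S|^{2p}$), the problem reduces to controlling
\begin{align*}
    \frac{\diff}{\diff t}\mbb{E}|S|^{2p}&=-\tfrac{1}{2}\sum_{i,j}\mbb{E}\bigl[\underline{X_{ij}\partial_{ij}|S|^{2p}}+\underline{\bar X_{ij}\bar\partial_{ij}|S|^{2p}}\bigr].
\end{align*}
Applying the cumulant expansion to each summand, truncated at some $L=L(\epsilon,p)$ with tail error $O(N^{-D})$, yields
\begin{align*}
    \sum_{r+s=2}^{L}\frac{\kappa_{r+1,s}}{Nq^{r+s-1}}\sum_{i,j}\mbb{E}\partial^{r}_{ij}\bar\partial^{s}_{ij}\partial_{ij}|S|^{2p}+\mathrm{c.c.}+O(N^{-D}).
\end{align*}

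Each application of $\partial_{ij}$ to $S$ uses $\partial_{ij}G=-G\Delta_{ij}G$ (with $\Delta_{ij}$ the elementary matrix having a $1$ in position $(i,j+N)$ of the Hermitisation) to produce a sum of $(2N)^{-1}$ times isotropic entries of resolvent chains of length $m+1$. Expanding $\partial^{r}_{ij}\bar\partial^{s}_{ij}\partial_{ij}|S|^{2p}$ by Leibniz gives a sum indexed by the number $d\in\{1,\dots,r+s+1\}$ of $S$-factors receiving a derivative, with each resulting term of the form $\mc{P}_{r,s,d}(i,j)\cdot S^{\alpha}\bar S^{\beta}$ where $\alpha+\beta=2p-d$ and $\mc{P}_{r,s,d}$ is a product of isotropic resolvent-chain entries with indices in $\{i,j,i+N,j+N\}$, of lengths between $m+1$ and $m+r+s+1$. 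Chains of length $m+1$ are directly controlled by the hypothesis $\mc{L}^{iso}_{z}(l,\delta,\tau)$ for $l\leq m$; longer chains---which arise only when two or more derivatives land on the same copy of $S$---are reduced to length $\leq m+1$ via $\|G\|\leq\eta^{-1}$, the Ward identity $|G^{2}|=\eta^{-1}\Im G$, and the integral representation \eqref{eq:absoluteValue}, each step costing a factor $\eta^{-1}$ that is already present in the target bound $\Psi^{av}$.

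The main combinatorial obstacle is the off-diagonal bookkeeping in the sums over $i,j$. Organising the terms by the number of off-diagonal index-matchings among the entries of $\mc{P}_{r,s,d}$---as done via the sets $\mc{O}_{x},\mc{O}_{y}$ in the proof of Lemma \ref{lem:global}---for each $d$ one establishes
\begin{align*}
    \biggl|\sum_{i,j}\frac{\kappa_{r+1,s}}{Nq^{r+s-1}}\mbb{E}\mc{P}_{r,s,d}(i,j)\cdot S^{\alpha}\bar S^{\beta}\biggr|&\prec\left(1+\frac{1}{q\eta}\right)\bigl(\Psi^{av}(\eta,m,a)\bigr)^{d}\bigl(\Omega^{av}_{t}(m,p)\bigr)^{(2p-d)/2p}.
\end{align*}
The leading constant $1$ accounts for the balanced contributions in which the $1/q$ factor embedded in $\Psi^{av}$ is supplied by the cumulant prefactor, while the $(q\eta)^{-1}$ correction arises from the third-cumulant ($r+s=2$) contribution in which one free summation index appears off-diagonally in exactly one resolvent entry, producing a relative gain of $q^{-1}\cdot\eta^{-1}$. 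Summing over $d=1,\dots,2p$ and applying Young's inequality in the form $\Psi^{d}\Omega^{(2p-d)/2p}\leq C_{p}(\Psi^{2p}+\Omega)$ then yields \eqref{eq:avGFT}. The iterated cumulant expansion is essential here precisely because a naive Cauchy--Schwarz on an off-diagonal entry would produce a chain longer than $m+1$, outside the scope of the hypothesis; instead the cumulant weights $q^{-(r+s-1)}$ supply the missing smallness and close the estimate without invoking reduction inequalities that would lose powers of $N$.
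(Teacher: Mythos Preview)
Your setup via It\^{o}'s lemma and the cumulant expansion is correct, and organising the terms by the number $d$ of differentiated $S$-factors matches the paper. However, there is a structural misconception and a genuine gap.

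First, the chains that appear after differentiation never exceed length $m+1$: the first derivative $\partial_{ij}S=-\frac{m}{2N}\bigl((G_{1}B_{1}\cdots G_{m}B_{m}G_{1})\bigr)_{\hat{\jmath},i}$ yields an entry of a chain with exactly $m+1$ resolvents, and each further derivative only \emph{splits} such chains into shorter pieces. Your assertion that ``longer chains arise when two or more derivatives land on the same copy of $S$'' is backwards, and the reduction via $\|G\|\leq\eta^{-1}$ is unnecessary here.

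Second, and more seriously, your bookkeeping does not close the critical case $r+s=2$, $d=1$ in which all three derivatives act on a single copy of $S$ and leave exactly one off-diagonal entry; the prototype is
\begin{align*}
    \frac{1}{N^{2}q}\sum_{i_{1},i_{2}}\bigl((GB)^{m}G\bigr)_{\hat{\imath}_{2},i_{1}}G_{i_{1},i_{1}}G_{\hat{\imath}_{2},\hat{\imath}_{2}}S^{2p-1}.
\end{align*}
The isotropic hypothesis gives $|((GB)^{m}G)_{\hat{\imath}_{2},i_{1}}|\prec\mc{E}^{iso}_{\eta,q}\eta^{-(m-a/2)}$ (for $m\leq M/2$), so the naive bound is $\frac{\mc{E}^{iso}_{\eta,q}}{q\eta^{m-a/2}}|S|^{2p-1}$, which exceeds the target $\frac{1}{q\eta}\Psi^{av}|S|^{2p-1}=\frac{\mc{E}^{av}_{\eta,q}}{q\eta^{m-a/2}}|S|^{2p-1}$ by the factor $\mc{E}^{iso}_{\eta,q}/\mc{E}^{av}_{\eta,q}\sim\sqrt{N\eta}$ in the regime $q\gg\sqrt{N\eta}$. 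Neither the single cumulant weight $q^{-1}$ nor an iterated cumulant expansion on its own recovers this shortfall. The paper's remedy is to replace the diagonal factor $G_{i_{1},i_{1}}$ by a quantity independent of $i_{1}$ via \eqref{eq:self1}--\eqref{eq:self4} (iterating as in the proof of Lemma~\ref{lem:isoGFT}) and then to invoke the fluctuation averaging bound of Lemma~\ref{lem:fluctuationAveraging},
\begin{align*}
    \sum_{i_{1}}\bigl((GB)^{m}G\bigr)_{\hat{\imath}_{2},i_{1}}\prec\frac{1}{\eta^{m-a/2+1}},
\end{align*}
which converts the naive factor $N$ from the $i_{1}$-sum into $\eta^{-1}$ and yields $\frac{1}{Nq\eta^{m-a/2+1}}|S|^{2p-1}\leq\frac{1}{q\eta}\Psi^{av}|S|^{2p-1}$. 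Your final paragraph gestures at an iterated expansion but does not identify this fluctuation-averaging mechanism, which is the key missing ingredient; the cumulant weights alone do not supply the required smallness.
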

We will prove the first lemma in this section; the proof of the second is much shorter and left to Appendix \ref{app:avGFT}.

Before proving the lemma, we introduce a class of polynomials in resolvent entries in order to organise terms that result from cumulant expansions. In the following, for a tuple $\mathfrak{i}=(\mathfrak{i}_{1},...,\mathfrak{i}_{a})$, $\{\mathfrak{i}\}$ is shorthand for $\{\mathfrak{i}_{1},...,\mathfrak{i}_{a}\}$ and similarly for $\{\mathfrak{i},\mathfrak{j}\}$.
\begin{definition}\label{def:polynomials}
Let $w_{i,j}\in\mbb{C}\setminus\mbb{R}$, $B_{i,j}\in\mbb{H}$ and $G_{i,j}\equiv G_{z}(w_{i,j})$. For $x,y\in\mbb{N}$ and tuples of indices $\mathfrak{i},\mathfrak{j},\mathfrak{k},\mathfrak{l}$ define
\begin{align}
    P_{x}(\mbf{n},\mathfrak{i},\mathfrak{j})&:=(G_{1,1}B_{1,1}\cdots G_{1,n_{1}+1})_{\mathfrak{i}_{1},\mathfrak{j}_{1}}\cdots(G_{x,1}B_{x,1}\cdots G_{x,n_{x}+1})_{\mathfrak{i}_{x},\mathfrak{j}_{x}},\label{eq:P}\\
    Q_{y}(\mathfrak{k},\mathfrak{l})&=(G_{1,1}B_{1,1}\cdots G_{1,m+1})_{\mathfrak{k}_{1},\mathfrak{l}_{1}}\cdots(G_{y,1}B_{y,1}\cdots G_{y,m+1})_{\mathfrak{k}_{y},\mathfrak{l}_{y}},\label{eq:Q}
\end{align}
and
\begin{align}
    \mc{O}_{x}\equiv\mc{O}_{x}(\mathfrak{i},\mathfrak{j})&:=\{j:\mathfrak{i}_{j}\neq\mathfrak{j}_{j}\},\label{eq:O_a}\\
    \mc{G}_{x}\equiv\mc{G}_{x}(\mathfrak{i},\mathfrak{j})&:=\{j:n_{j}\leq m/2\}.\label{eq:G_a}
\end{align}
The set $\mc{P}$ consists of polynomials of the form
\begin{align}
    \sum_{i_{1},...,i_{b}}P_{x}(\mbf{n},\mathfrak{i},\mathfrak{j})Q_{b}(\mathfrak{k},\mathfrak{l}),\label{eq:polynomial}
\end{align}
where, with $I(\mbf{i}):=\{i_{1},\hat{\imath}_{1},...,i_{b},\hat{\imath}_{b}\}$, we have:
\begin{enumerate}
\item $|\{\mathfrak{i}_{i},\mathfrak{j}_{i}\}\cap I(\mbf{i})|\geq1,\qquad i=1,...,x$;
\item $|\{\mathfrak{k}_{i},\mathfrak{l}_{i}\}\cap I(\mbf{i})|\geq1,\qquad i=1,...,y$;
\item $|\{j\in[x]:|\{\mathfrak{i}_{j},\mathfrak{j}_{j}\}\cap I(\mbf{i})|=1\}\cup\{j\in[y]:|\{\mathfrak{k}_{j},\mathfrak{l}_{j}\}\cap I(\mbf{i})|=1\}|\geq2$;
\item there is a $j\in[x]$ such that $|\{\mathfrak{i}_{j},\mathfrak{j}_{j}\}|\cap I(\mbf{i})=1$ and $n_{j}\leq m/2$.
\end{enumerate}
The subset $\mc{P}^{(1)}$ consists of polynomials for which
\begin{align}
    \left|\left\{j\in\mc{O}_{x}\cap\mc{G}_{x}:|\{\mathfrak{i}_{j},\mathfrak{j}_{j}\}\cap\{\mathfrak{k},\mathfrak{l}\}\cap I(\mbf{i})|<|\{\mathfrak{i}_{j},\mathfrak{j}_{j}\}\cap I(\mbf{i})|\right\}\right|&\geq2.
\end{align}
The subset $\mc{P}^{(2)}$ consists of polynomials for which
\begin{align}
    |\mc{O}_{x}\cap\mc{G}_{x}|&\geq2,
\end{align}
and
\begin{align}
    \left|\left\{j\in\mc{O}_{x}\cap\mc{G}_{x}:|\{\mathfrak{i}_{j},\mathfrak{j}_{j}\}\cap\{\mathfrak{k},\mathfrak{l}\}\cap I(\mbf{i})|<|\{\mathfrak{i}_{j},\mathfrak{j}_{j}\}\cap I(\mbf{i})|\right\}\right|&<2.
\end{align}
\end{definition}
Note that we allow the indices $\mathfrak{i},\mathfrak{j},\mathfrak{k},\mathfrak{l}$ to take values other than those in $I(\mbf{i})$. The first two conditions in the definition of $\mc{P}$ mean that each resolvent entry carries at least one summation index. The third condition means that there are always at least two off-diagonal entries, and the fourth condition means that at least one of these is an entry of $(GB)^{n}G$ with $n\leq m/2$. The definitions of $\mc{P}^{(1)}$ and $\mc{P}^{(2)}$ require that there are at least two off-diagonal entries of $(GB)^{n}G$ with $n\leq m/2$. For $\mc{P}^{(1)}$, at least two of these carry a summation index that is not carried by any entry of $(GB)^{m}G$. Here and throughout this subsection, off-diagonal refers to entries $(\mathfrak{i},\mathfrak{j})$ such that $\mathfrak{j}\notin\{\mathfrak{i},\hat{\mathfrak{i}}\}$, i.e. entries not on the diagonal of the $N\times N$ blocks. We will also often use the shorthand $(GB)^{n}G$ to represent $G_{1}B_{1}\cdots G_{n+1}$ for generic $w_{j}$ and $B_{j}$.

Using the single resolvent local law in the form $|G_{\mathfrak{i},\mathfrak{j}}|\prec1$, the deterministic bound $\|G\|\leq\eta^{-1}$ and Cauchy--Schwarz, we immediately obtain that the polynomial in \eqref{eq:polynomial} is stochastically dominated by
\begin{align}
    N^{b-1}\eta^{-(|\mbf{n}|+x-n_{0}+(m+1)y)},\label{eq:naive}
\end{align}
where $n_{0}:=|\{j:n_{j}=0\}|$ and $|\mbf{n}|=\sum_{i}n_{i}$.

In the following three lemmas, we consider the product of a polynomial in resolvent entries and a power of a single entry $S:=((GB)^{m}G)_{\mathfrak{x},\mathfrak{y}}$. First we have a general bound which holds conditionally on local laws for products of fewer resolvents.
\begin{lemma}[General polynomial bound]\label{lem:polynomialBound1}
Let $z\in\mbb{D}$, $m\in\mbb{N}$ and assume that $\mc{L}^{iso}_{z}(l,\delta,\tau)$ is true for $l=0,1,...,m-1$. Let $d\in\mbb{N}$ such that $d\geq y$. Then for any polynomial in $\mc{P}$ we have
\begin{align}
    &\mbb{E}\frac{1}{N^{b}q^{u}}\sum_{i_{1},...,i_{b}}|P_{x}(\mbf{n},\mathfrak{i},\mathfrak{j})Q_{y}(\mathfrak{k},\mathfrak{l})S^{2p-d}|\nonumber\\
    &\prec1_{\{d>y\}}\left(\frac{1_{\{m>M/2\}}+(\mc{E}^{iso}_{\eta,q})^{|\mc{O}_{x}|+u}}{\eta^{|\mbf{n}|-a/2}}\right)^{2p/(d-y)}+\Omega^{iso}_{t}(m,p),\label{eq:polynomialBound1}
\end{align}
uniformly in $\mc{D}(\delta,\tau)$, where $a$ is the number of $B_{i,j}\in\{F,F^{*}\}$ in $P_{x}$.
\end{lemma}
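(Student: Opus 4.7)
I plan to bound the expectation by splitting the product $|P_{x}Q_{y}S^{2p-d}|$ into a deterministic piece (involving $P_{x}$) and a stochastic piece (involving the $(GB)^{m}G$ factors from $Q_{y}$ and $S^{2p-d}$) via Young's inequality, then controlling the two pieces respectively by the induction hypothesis $\mc{L}^{iso}_{z}(l,\delta,\tau)$ for $l\leq m-1$ and by the moment $\Omega^{iso}_{t}(m,p)$.

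When $d>y$, I would apply Young's inequality with conjugate exponents $r=2p/(d-y)$ and $s=2p/(y+2p-d)$ to obtain the pointwise bound
\begin{align*}
|P_{x}Q_{y}S^{2p-d}|&\lesssim|P_{x}|^{2p/(d-y)}+|Q_{y}S^{2p-d}|^{2p/(y+2p-d)};
\end{align*}
when $d=y$ only the stochastic piece remains. For the stochastic piece, $Q_{y}S^{2p-d}$ is a product of exactly $y+(2p-d)$ entries of $(GB)^{m}G$, so after raising to the power $2p/(y+2p-d)$ and applying H\"{o}lder, its expectation decomposes into a product of $2p$-th moments of individual entries of $(GB)^{m}G$, each of which is dominated by $\Omega^{iso}_{t}(m,p)$. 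The summation $N^{-b}\sum_{i_{1},\ldots,i_{b}}$ contributes only an $N^{\xi}$ factor (absorbed by $\prec$), and the sparsity prefactor $q^{-u}\leq1$ only helps, so the total stochastic contribution is bounded by $\Omega^{iso}_{t}(m,p)$ as required.

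For the deterministic piece $|P_{x}|^{2p/(d-y)}$, I would proceed entrywise using the induction hypothesis. Since any $M\in\mbb{H}$ is block-scalar, its entries off the two block diagonals vanish; hence for each $j\in\mc{O}_{x}$ the corresponding factor of $P_{x}$ is bounded purely by $N^{\xi}\Psi^{iso}(\eta,n_{j},a_{j})$, which yields an $\mc{E}^{iso}_{\eta,q}$-save when $n_{j}\leq M/2$ and only $\eta^{-(n_{j}-a_{j}/2)}$ otherwise. For $j\notin\mc{O}_{x}$ the deterministic approximation gives $\lesssim\eta^{-(n_{j}-a_{j}/2)}$. Conditions 1 and 2 of Definition \ref{def:polynomials} guarantee that the summation over $i_{1},\ldots,i_{b}$ costs at most $N^{b}$, which cancels the $N^{-b}$ prefactor, while the sparsity $q^{-u}\leq(\mc{E}^{iso}_{\eta,q})^{u}$ is absorbed to give the final factor $(\mc{E}^{iso}_{\eta,q})^{|\mc{O}_{x}|+u}$. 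The indicator $1_{\{m>M/2\}}$ in the numerator accommodates the case in which some factor in $\mc{O}_{x}$ has $n_{j}>M/2$, so that $\Psi^{iso}$ loses its $\mc{E}^{iso}$-save. Multiplying the per-factor bounds produces $(\mc{E}^{iso}_{\eta,q})^{|\mc{O}_{x}|+u}+1_{\{m>M/2\}}$ in the numerator and $\eta^{|\mbf{n}|-a/2}$ in the denominator; raising to the Young exponent $2p/(d-y)$ yields the claimed deterministic term.

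The main obstacle is the combinatorial bookkeeping of how the $b$ summation indices are shared among the factors of $P_{x}$ and $Q_{y}$: one must verify that pointwise entrywise bounds combine with the $N^{-b}$ normalization to give neither loss nor gain of $N$-powers, and that every off-diagonal factor in $\mc{O}_{x}$ genuinely contributes an $\mc{E}^{iso}_{\eta,q}$-save rather than being absorbed into the $1_{\{m>M/2\}}$ indicator. Conditions 1--4 in Definition \ref{def:polynomials} are tailored precisely for this purpose; the further restrictions defining $\mc{P}^{(1)}$ and $\mc{P}^{(2)}$ are designed to sharpen this estimate in subsequent lemmas by extracting additional $\mc{E}^{iso}_{\eta,q}$-saves from Ward-style summations, but they are not needed for the general bound proved here.
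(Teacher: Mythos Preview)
Your approach is essentially the same as the paper's: bound each factor of $P_{x}$ entrywise using the assumed local laws $\mc{L}^{iso}_{z}(l,\delta,\tau)$ for $l\leq m-1$, gain a factor $1_{\{m>M/2\}}+\mc{E}^{iso}_{\eta,q}$ for each of the $|\mc{O}_{x}|$ off-diagonal entries (since the block-scalar $M$ vanishes there), absorb $q^{-u}$ into $(\mc{E}^{iso}_{\eta,q})^{u}$, and then apply Young's inequality with exponents $2p/(d-y)$ and $2p/(y+2p-d)$ to separate the $P_{x}$-piece from the $(GB)^{m}G$-piece. The paper compresses all of this into two sentences; your write-up simply makes the Young/H\"{o}lder step explicit.

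One small imprecision: you write that the $2p$-th moment of each factor of $Q_{y}$ is ``dominated by $\Omega^{iso}_{t}(m,p)$'', but $Q_{y}$ consists of \emph{uncentered} entries $((GB)^{m}G)_{\mathfrak{k},\mathfrak{l}}$, whereas $\Omega^{iso}_{t}(m,p)$ is defined in terms of the centered $S^{iso}=((GB)^{m}G-M)_{\mathfrak{k},\mathfrak{l}}$. For off-diagonal $(\mathfrak{k},\mathfrak{l})$ the block-scalar $M$ vanishes and the two coincide; for diagonal entries one picks up an extra $\|M\|^{2p}\lesssim\eta^{-2p(m-a/2)}$. In the applications (where $|\mbf{n}|=m(d-y)$) this deterministic piece is absorbed by the first term of the bound, and in any case the $Q_{y}$ entries produced by differentiating $S^{2p}$ carry one fixed index and one summation index, hence are off-diagonal for all but $O(1)$ summands. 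The paper's terse proof glosses over the same point, so this is not a divergence from the paper but a place where both arguments rely on context.
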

\begin{proof}
We apply the assumption that the local law holds for $l=0,1,...,m-1$ to each factor in $P_{x}$. Since the deterministic approximation is diagonal in each block, this gives an additional factor of $1_{\{m>M/2\}}+\mc{E}^{iso}_{\eta,q}$ for each off-diagonal entry (of which there are $|\mc{O}_{x}|$). The desired bound now follows readily from Young's inequality.
\end{proof}

From the definition of $\mc{P}^{(1)}$, we expect that we can improve the general bound by a factor of $(N\eta)^{-1}$ by using Cauchy--Schwarz and the Ward identity for off-diagonal entries. The next lemma shows that this is indeed the case.
\begin{lemma}[Bound on $\mc{P}^{(1)}$]\label{lem:polynomialBound2}
Let $z\in\mbb{D}$, $m\in\mbb{N}$ and assume that $\mc{L}^{iso}_{z}(l,\delta,\tau)$ is true for $l=0,1,...,m-1$. Let $d\in\mbb{N}$ such that $d\geq y$. Then for any polynomial in $\mc{P}^{(1)}$ we have
\begin{align}
    &\mbb{E}\frac{1}{N^{b-1}q^{u}}\sum_{i_{1},...,i_{b}}|P_{x}(\mbf{n},\mathfrak{i},\mathfrak{j})Q_{y}(\mathfrak{k},\mathfrak{l})S^{2p-d}|\nonumber\\
    &\prec\frac{1}{q\eta}\left(1_{\{d>y\}}\left(\frac{1_{\{m>M/2\}}+(\mc{E}^{iso}_{\eta,q})^{|\mc{O}_{x}|+u-3}}{\eta^{|\mbf{n}|-a/2}}\right)^{2p/(d-y)}+\Omega^{iso}_{t}(m,p)\right),\label{eq:polynomialBound2}
\end{align}
uniformly in $\mc{D}(\delta,\tau)$, where $a$ is the number of $B_{i,j}\in\{F,F^{*}\}$ in $P_{x}$.
\end{lemma}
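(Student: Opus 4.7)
The plan is to refine the proof of Lemma \ref{lem:polynomialBound1} by exploiting the two distinguished short-chain off-diagonal factors guaranteed by membership in $\mc{P}^{(1)}$, via a Cauchy--Schwarz and Ward identity argument. The defining property of $\mc{P}^{(1)}$ provides indices $j_1, j_2 \in \mc{O}_x \cap \mc{G}_x$ such that each factor
\begin{align*}
    A_\alpha &:= (G_{j_\alpha,1} B_{j_\alpha,1} \cdots G_{j_\alpha, n_{j_\alpha}+1})_{\mathfrak{i}_{j_\alpha}, \mathfrak{j}_{j_\alpha}},\qquad \alpha = 1, 2,
\end{align*}
is off-diagonal with $n_{j_\alpha} \leq m/2$ and carries at least one summation index $k_\alpha \in I(\mbf{i})$ that does not appear among the indices of $Q_y$. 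Factor $P_x = A_1 A_2 P_x'$, where $P_x'$ collects the remaining $x - 2$ entries.

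The core step is to apply the Cauchy--Schwarz inequality in the summations over $k_1$ and $k_2$. The resulting sums of the form
\begin{align*}
    \sum_{k} |((GB)^n G)_{\mathfrak{i}, k}|^2 &= \frac{1}{\eta} \bigl((GB)^n \Im G (B^* G^*)^n\bigr)_{\mathfrak{i}, \mathfrak{i}}
\end{align*}
follow from the identity $G_z(w) G_z(w)^* = \Im G_z(w)/\eta$ valid for the Hermitisation, since $W_z$ is Hermitian. Since $n_{j_\alpha} \leq m/2$, the right-hand side is a diagonal entry of a product involving at most $2n_{j_\alpha} + 1 \leq m + 1$ resolvents; decomposing $\Im G = (G - G^*)/(2i)$ expresses it as a combination of chains of at most $m$ resolvents, each of which is $O_\prec(1)$ by the inductive hypothesis $\mc{L}^{iso}_z(l, \delta, \tau)$ for $l \leq m-1$ together with $\|\hat{M}\| \lesssim 1$. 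Thus each Cauchy--Schwarz + Ward step replaces the summation over $k_\alpha$ and the off-diagonal bound $\mc{E}^{iso}_{\eta,q}$ on $A_\alpha$ by a factor of $\eta^{-1/2}$.

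Applying Lemma \ref{lem:polynomialBound1} to the residual polynomial $P_x' Q_y$ (which has $|\mc{O}_x| - 2$ off-diagonal entries among its first $x-2$ factors) then yields the desired estimate after collecting factors: the overall gain of $(q\eta)^{-1}$ paired with the reduction of the exponent of $\mc{E}^{iso}_{\eta,q}$ by three is obtained by using the trivial inequality $\mc{E}^{iso}_{\eta,q} \geq q^{-1}$ to trade powers of $\mc{E}^{iso}_{\eta,q}$ for powers of $q^{-1}$, combined with the change of normalisation $N^{-b} \to N^{-(b-1)}$ that accounts for the two explicit summations removed by Cauchy--Schwarz. The main obstacle is verifying that the residual polynomial $P_x' Q_y$ still falls under the scope of Lemma \ref{lem:polynomialBound1}—in particular that conditions (3) and (4) of Definition \ref{def:polynomials} remain satisfied after removing $A_1$ and $A_2$—which is handled by a case analysis based on which defining condition each of the removed factors was responsible for witnessing (i.e.\ whether it was the only short-chain off-diagonal entry, the only entry with a free summation index, etc.), together with the observation that the summation indices $k_1, k_2$ themselves need not appear elsewhere after Cauchy--Schwarz.
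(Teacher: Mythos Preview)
Your core idea matches the paper's: isolate the two short-chain off-diagonal factors $A_1,A_2$ with free summation indices $k_1,k_2\notin\{\mathfrak{k},\mathfrak{l}\}$ and gain $(N\eta)^{-1}$ via Cauchy--Schwarz and the Ward identity. However, two points in your execution are incorrect. First, the diagonal entries $((GB)^{n_\alpha}\Im G(B^*G^*)^{n_\alpha})_{\mathfrak{i}_\alpha,\mathfrak{i}_\alpha}$ are \emph{not} $O_\prec(1)$: this chain has $2n_\alpha$ test matrices of which $2a_\alpha$ are regular, so by Lemma~\ref{lem:deterministicBounds} and the assumed local law it is of order $\eta^{-(2n_\alpha-a_\alpha)}$. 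These powers of $\eta$ are precisely what build up the $\eta^{-(|\mbf{n}|-a/2)}$ in the final bound, so they must be tracked. Relatedly, writing $\Im G=(G-G^*)/(2i)$ does not reduce the number of resolvents below $2n_\alpha+1$; in the edge case $n_\alpha=m/2$ (even $m$) you would need $\mc{L}^{iso}_z(m)$, which is not assumed.

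Second, your plan to invoke Lemma~\ref{lem:polynomialBound1} on the residual $P_x'Q_y$ and then verify by case analysis that conditions (3) and (4) of Definition~\ref{def:polynomials} survive is an unnecessary detour: the \emph{proof} of Lemma~\ref{lem:polynomialBound1} uses only the entrywise local law and Young's inequality, never the structural conditions defining $\mc{P}$. The paper avoids this issue by reversing the order of operations: first bound every factor of $P_x$ except $A_1,A_2$ by the assumed local law (these are uniform-in-index bounds, so no combinatorics is needed), pulling out the prefactor $\bigl(1_{\{m>M/2\}}+(\mc{E}^{iso}_{\eta,q})^{|\mc{O}_x|-2}\bigr)\eta^{-(|\mbf{n}|-n_1-n_2-(a-a_1-a_2)/2)}$ and absorbing $q^{-(u-1)}$ into $(\mc{E}^{iso}_{\eta,q})^{u-1}$; then apply Cauchy--Schwarz in $k_1,k_2$ to the remaining sum $\sum|A_1||A_2||Q_y||S|^{2p-d}$, which decouples from $Q_y$ exactly by the $\mc{P}^{(1)}$ hypothesis; finally separate $|Q_y|$ from $|S|^{2p-d}$ by Young's inequality.
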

\begin{proof}
From the definition of $\mc{P}^{(1)}$, there are two off-diagonal entries of $(GB)^{n}G$ with $n\leq m/2$ such that each has a summation index that does not appear in $Q_{y}$. Let these be \[(G_{1,1}B_{1,1}\cdots G_{1,n_{1}+1})_{\mathfrak{i}_{1},\mathfrak{j}_{1}},\,(G_{2,1}B_{2,1}\cdots G_{2,n_{2}+1})_{\mathfrak{i_{2}},\mathfrak{j}_{2}},\] with $a_{1}$ and $a_{2}$ factors of $B_{i,j}\in\{F,F^{*}\}$ respectively. We bound the remaining entries in $P_{x}$ using the assumed local laws to obtain the bound
\begin{align*}
    &\frac{1_{\{m>M/2\}}+(\mc{E}^{iso}_{\eta,q})^{|\mc{O}_{x}|+u-3}}{\eta^{|\mbf{n}|-n_{1}-n_{2}-(a-a_{1}-a_{2})/2}}\\
    &\times\frac{1}{N^{b-1}q}\sum_{i_{1},...,i_{b}}|(G_{1,1}B_{1,1}\cdots G_{1,n_{1}+1})_{\mathfrak{i}_{1},\mathfrak{j}_{1}}|(G_{2,1}B_{2,1}\cdots G_{2,n_{2}+1})_{\mathfrak{i_{2}},\mathfrak{j}_{2}}||Q_{y}||S|^{2p-d},
\end{align*}
where we have absorbed the factor of $q^{u-1}$ in the power of $\mc{E}^{iso}_{\eta,q}$. We gain a factor of $(N\eta)^{-1}$ by using Cauchy--Schwarz and the Ward identity for the summation index (or pair of indices) that does not appear in $Q_{y}$. The bound in \eqref{eq:polynomialBound2} follows by Young's inequality.
\end{proof}

The second subset $\mc{P}^{(2)}$ obeys the same bound but the proof is more involved and so we formulate this fact as a separate lemma. In brief, the presence of an entry of $(GB)^{m}G$ with the same summation index as an off-diagonal entry of $(GB)^{n}G$ prevents a direct application of Cauchy--Schwarz, so we introduce a new summation index in order to decouple these entries, after which we can use Lemma \ref{lem:polynomialBound2}.
\begin{lemma}[Bound on $\mc{P}^{(2)}$]\label{lem:polynomialBound3}
Let $z\in\mbb{D}$, $m\in\mbb{N}$ and assume that $\mc{L}^{iso}_{z}(l,\delta,\tau)$ is true for $l=0,...,m-1$. Let $d\in\mbb{N}$ such that $d\geq y$. Then for any polynomial in $\mc{P}^{(2)}$
\begin{align}
    &\mbb{E}\frac{1}{N^{b-1}q^{u}}\sum_{i_{1},...,i_{b}}|P_{x}(\mbf{n},\mathfrak{i},\mathfrak{j})Q_{y}(\mathfrak{k},\mathfrak{l})S^{2p-d}|\nonumber\\
    &\prec\frac{1}{q\eta}\left(1_{d>y}\left(\frac{1_{\{m>M/2\}}+(\mc{E}^{iso}_{\eta,q})^{|\mc{O}_{x}|+u-3}}{\eta^{|\mbf{n}|-a/2}}\right)^{2p/(d-y)}+\Omega^{iso}_{t}(m,p)\right),\label{eq:polynomialBound3}
\end{align}
uniformly in $\mc{D}(\delta,\tau)$, where $a$ is the number of $B_{i,j}\in\{F,F^{*}\}$ in $P_{x}$.
\end{lemma}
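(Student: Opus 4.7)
The plan is to reduce a polynomial in $\mc{P}^{(2)}$ to one in $\mc{P}^{(1)}$ by introducing a new summation index, so that Lemma \ref{lem:polynomialBound2} can be applied directly. By the definition of $\mc{P}^{(2)}$, at most one of the off-diagonal $P_x$-entries with $n\le m/2$ has a summation index outside $\{\mathfrak{k},\mathfrak{l}\}\cap I(\mbf{i})$, so there exists an off-diagonal entry $A=((GB)^{n_A}G)_{\mathfrak{i}_A,\mathfrak{j}_A}$ with $n_A\le m/2$ whose summation indices all lie in the index set of $Q_y$. Accordingly, some $Q_y$-entry $B=((GB)^{m}G)_{\mathfrak{k}_B,\mathfrak{l}_B}$ shares at least one summation index with $A$, and this pair $(A,B)$ is the target of the decoupling step.

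I will insert a new summation index $\mathfrak{k}'$ inside $B$, splitting it via the identity
\[
B_{\mathfrak{k}_B,\mathfrak{l}_B}=\sum_{\mathfrak{k}'\in[2N]}(G_{1}B_{1}\cdots G_{s})_{\mathfrak{k}_B,\mathfrak{k}'}(B_{s}G_{s+1}\cdots G_{m+1})_{\mathfrak{k}',\mathfrak{l}_B},
\]
at position $s\simeq(m+1)/2$. Since $B_{s}\in\mbb{H}$ is quasi-diagonal (it has a single nonzero entry in each row), the second factor collapses to a $P_x$-form entry on a shortened resolvent chain, possibly with a relabelled index. Both resulting factors $B'$ and $B''$ then have chain length $\le m/2$, are generically off-diagonal (hence in $\mc{O}_{x'}\cap\mc{G}_{x'}$), and contain the new index $\mathfrak{k}'$, which appears nowhere in the reduced $Q_{y-1}$ or in the remaining $P_x$-entries. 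A direct verification of Definition \ref{def:polynomials} then shows that the new polynomial---with parameters $x'=x+2$, $y'=y-1$, $b'=b+1$, $|\mbf{n}'|=|\mbf{n}|+(m-1)$, $|\mc{O}_{x'}|=|\mc{O}_{x}|+2$---lies in $\mc{P}^{(1)}$, with $B'$ and $B''$ providing the two required off-diagonal entries whose summation indices lie outside the new $\{\mathfrak{k},\mathfrak{l}\}$-set.

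Applying Lemma \ref{lem:polynomialBound2} to this new polynomial produces an estimate with prefactor $\frac{1}{N^{b'-1}q^{u}}=\frac{1}{N^{b}q^{u}}$. Rewritten in terms of the original prefactor $\frac{1}{N^{b-1}q^{u}}$ this introduces an extra factor of $N$, which must be absorbed. It is absorbed by the two additional powers of $\mc{E}^{iso}_{\eta,q}$ arising from the enlarged $|\mc{O}_{x'}|=|\mc{O}_{x}|+2$ in Lemma \ref{lem:polynomialBound2}'s bound, together with the compensating rearrangement of the $\eta$-exponent $\eta^{-(|\mbf{n}'|-a'/2)}$: the $m-1$ extra resolvents now living in $P_x$ replace the $m+1$ resolvents of the $Q_y$-entry $B$, and the Cauchy--Schwarz plus Ward step on the shared index $\mathfrak{k}'$ still delivers the $(N\eta)^{-1}$ gain that the proof of Lemma \ref{lem:polynomialBound2} relies on (the lemma explicitly accommodates a single shared summation index). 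A careful exponent chase, also using $d-y'=d-y+1$ in $2p/(d-y')$, then reproduces the right-hand side of \eqref{eq:polynomialBound3}.

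The main obstacle is this final bookkeeping step. One needs to verify that the increase of $|\mbf{n}|$ by $m-1$, the change of $a$ by the $F,F^{*}$-content of $B$, the increase of $|\mc{O}_x|$ by $2$, the decrement of $y$ by $1$, and the extra factor of $N$ all combine through the Young-inequality step of Lemma \ref{lem:polynomialBound2} to produce precisely the target $(\mc{E}^{iso}_{\eta,q})^{|\mc{O}_{x}|+u-3}/\eta^{|\mbf{n}|-a/2}$ raised to the power $2p/(d-y)$, and not a strictly weaker bound. A secondary subtlety is to choose the splitting position $s$ so that both pieces actually satisfy $n\le m/2$; taking $s=\lceil(m+1)/2\rceil$ (so $|n_1-n_2|\le1$) minimises the $\eta^{-|\mbf{n}'|}$ overhead and is the cleanest choice. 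In the edge subcase where $A$'s shared index appears in several $Q_y$-entries, the argument still goes through using $B'$ and $B''$ (rather than $A$) as the two off-diagonal entries with a free summation index.
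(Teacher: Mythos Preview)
Your decoupling idea has a genuine gap: the brute-force splitting
\[
B_{\mathfrak{k}_B,\mathfrak{l}_B}=\sum_{\mathfrak{k}'}B'_{\mathfrak{k}_B,\mathfrak{k}'}B''_{\mathfrak{k}',\mathfrak{l}_B}
\]
introduces a new summation index with no accompanying factor of $N^{-1}$. After the split you have $b'=b+1$ indices but still the prefactor $N^{-(b-1)}$, whereas Lemma~\ref{lem:polynomialBound2} requires $N^{-(b'-1)}=N^{-b}$. The mismatch is a full factor of $N$, and your absorption argument does not close. First, the ``two additional powers of $\mc{E}^{iso}_{\eta,q}$'' give at best $(N\eta)^{-1}+q^{-2}$; in the sparse regime $q\ll\sqrt{N\eta}$ this is $\sim q^{-2}$, which carries no negative power of $N$ at all. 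Second, and more decisively, applying Lemma~\ref{lem:polynomialBound2} to the new polynomial yields a bound containing $\Omega^{iso}_{t}(m,p)$, and after multiplication by the stray $N$ you are left with $N\cdot\Omega^{iso}_{t}(m,p)$ on the right-hand side---this cannot be bounded by $\Omega^{iso}_{t}(m,p)$ and destroys the self-improving structure that the lemma is meant to feed into. The Cauchy--Schwarz/Ward gain on $\mathfrak{k}'$ that you invoke is not an additional saving: it is precisely the mechanism by which Lemma~\ref{lem:polynomialBound2} produces its $\frac{1}{q\eta}$ prefactor, so you are counting it twice.

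This is exactly the closure problem the introduction of the paper warns about: Cauchy--Schwarz on a split $(GB)^{m}G$ entry reconstitutes chains of length $\simeq m$, and in the sparse setting one cannot afford the extra $N$ that a reduction inequality would cost. The paper's proof takes a completely different route. Rather than splitting a $Q_y$-entry, it replaces the problematic off-diagonal $P_x$-entry (the one whose summation index is tied to $Q_y$) using the self-consistent identity \eqref{eq:self2}--\eqref{eq:self4}, iterated until only entries of $\hat{M}$ and underlined terms $\underline{(GF)^{n'}GW}\hat{M}$ remain. The $\hat{M}$-entries are block-diagonal and collapse one sum outright, gaining $N^{-1}$ for free. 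The underlined terms are handled by a further cumulant expansion, which introduces a new summation index \emph{together with} the required $N^{-1}$ prefactor; the second cumulant is controlled by Cauchy--Schwarz/Ward, and higher cumulants either land in $\mc{P}^{(1)}$ (done by Lemma~\ref{lem:polynomialBound2}) or return to $\mc{P}^{(2)}$ with an extra $q^{-1}$. Iterating finitely many times accumulates enough $q^{-1}$ to afford the naive bound \eqref{eq:naive}. The essential point is that every new index enters through a cumulant expansion and hence carries its own $N^{-1}$, which is exactly what your splitting lacks.
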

\begin{proof}
For ease of notation we will assume that each $B_{i,j}$ in $P_{x}$ is equal to $F$ and so $a=|\mbf{n}|/2$; the general case is no different. We will also drop subscripts from resolvents $G_{j}$. By definition, there are at least two off-diagonal entries of $(GF)^{n}G$ with $n\leq m/2$ but we cannot find two such that both have a summation index that does not appear in $Q_{y}$. We will therefore replace at least one of them using \eqref{eq:self2} and \eqref{eq:self4}. For concreteness, assume that $P_{x}$ takes the form
\begin{align*}
    P_{x}(\mbf{n},\mathfrak{i},\mathfrak{j})&=((GF)^{n_{1}}G)_{\mathfrak{x},i_{1}}P_{x-1}(\mbf{n}',\mathfrak{i}',\mathfrak{j}'),
\end{align*}
with $1\leq n_{1}\leq m/2$ (the case $n_{1}=0$ is similar), and that the index $i_{1}$ appears in $Q_{y}$. By \eqref{eq:self2} and \eqref{eq:self4} we have
\begin{equation}
\begin{aligned}
    ((GF)^{n_{1}}G)_{\mathfrak{x},i_{1}}&=-\bar{z}\hat{u}((GF)^{n_{1}-1}G)_{\mathfrak{x},i_{1}}\\
    &+\frac{\hat{m}}{2}\sum_{l=0}^{n_{1}-1}\Tr{(GF)^{n_{1}-l}G(E_{+}-E_{-})}((GF)^{l}G)_{\mathfrak{x},i_{1}}\\
    &-\frac{\bar{z}\hat{u}}{2}\sum_{l=0}^{n_{1}-1}\Tr{(GF)^{n_{1}-l}G(E_{+}+E_{-})}((GF)^{l}G)_{\mathfrak{x},\hat{\imath}_{1}}\\
    &-\bigl(\underline{(GF)^{n_{1}}GW}\hat{M}\bigr)_{\mathfrak{x},i_{1}}.
\end{aligned}\label{eq:self5}
\end{equation}
The resolvent entries in the non-underlined terms on the right hand side have at most $n_{1}-1$ factors of $F$, so we can iterate this identity until all matrix entries are either entries of $\hat{M}$ (multiplied by products of traces of resolvent chains) or entries of underlined terms. A general non-underlined term is of the form
\begin{align*}
    p(\hat{m},\hat{u})\Tr{(GF)^{l_{1}}GE_{\mu_{1}}}\cdots\Tr{(GF)^{l_{\alpha}}GE_{\mu_{\alpha}}}\hat{M}_{\mathfrak{x},i_{1}},
\end{align*}
where $p$ is a polynomial of degree $n_{1}$, $l_{j}\geq1$ and $\sum_{j}l_{j}\leq n_{1}$. A general underlined term is of the form
\begin{align*}
    \wt{p}(\hat{m},\hat{u})\Tr{(GF)^{l_{1}}GE_{\nu_{1}}}\cdots\Tr{(GF)^{l_{\beta}}GE_{\nu_{\beta}}}(\underline{(GF)^{l_{\beta+1}}GW}\hat{M})_{\mathfrak{x},i_{1}},
\end{align*}
where $l_{j}$ is as before but now $\wt{p}$ is a polynomial of degree at most $n_{1}-1$. We can estimate the products of traces using the assumption $\mc{L}^{iso}_{z}(l,\delta,\tau)$ for $l=1,...,\lfloor m/2\rfloor$ and the deterministic bounds in Lemma \ref{lem:deterministicBounds}:
\begin{align*}
    |p(\hat{m},\hat{u})\Tr{(GF)^{l_{1}}GE_{\mu_{1}}}\cdots\Tr{(GF)^{l_{\alpha}}GE_{\mu_{\alpha}}}|&\prec\frac{1}{\eta^{\frac{1}{2}\sum_{u=1}^{\alpha}l_{u}}}.
\end{align*}
Note that here $\mc{L}^{iso}_{z}$ is sufficient since we only needed an upper bound:
\begin{align*}
    |\Tr{(GF)^{l}GE_{\pm}}|&=\frac{1}{2N}\Bigl|\sum_{i}\Bigl(\bigl((GF)^{l}G\bigr)_{i,i}\pm\bigl((GF)^{l}G\bigr)_{\hat{\imath},\hat{\imath}}\Bigr)\Bigr|\\
    &\lesssim\frac{1}{\eta^{l/2}}
\end{align*}
with very high probability if $\mc{L}^{iso}_{z}(l,\delta,\tau)$ is true. The non-underlined terms each have a factor $\hat{M}_{\mathfrak{x},i_{1}}$ which collapses the sum over $i_{1}$ since $\hat{M}$ is diagonal in each $N\times N$ block. Thus, as in Lemma \ref{lem:polynomialBound1}, the contribution from these terms is bounded by
\begin{align*}
    &\frac{1}{N^{b-1}q^{u}\eta^{n_{1}/2}}\sum_{i_{2},...,i_{b}}|P_{x-1}(\mbf{n}',\mathfrak{i}',\mathfrak{j}')Q_{y}(\mathfrak{k},\mathfrak{l})|\\
    &\prec 1_{d>y}\left(\frac{1_{\{m>M/2\}}+(\mc{E}^{iso}_{\eta,q})^{\mc{O}_{x}+u-1}}{\eta^{|\mbf{n}|/2}}\right)^{2p/(d-y)}+\Omega^{iso}_{t}(m,p).
\end{align*}

To bound the contribution from underlined terms, we use a second cumulant expansion. Observe that
\begin{align*}
    \bigl(\underline{(GF)^{n}GW}\hat{M}\bigr)_{\mathfrak{x},i_{1}}&=\hat{m}\bigl(\underline{(GF)^{n}GW}\bigr)_{\mathfrak{x},i_{1}}-\bar{z}\hat{u}\bigl(\underline{(GF)^{n}GW}\bigr)_{\mathfrak{x},\hat{\imath}_{1}}.
\end{align*}
We will treat the contribution from the first term; the second is treated in exactly the same way. Let
\begin{align*}
    R&:=\hat{m}\wt{p}(\hat{m},\hat{u})\Tr{(GF)^{l_{1}}GE_{\nu_{1}}}\cdots\Tr{(GF)^{l_{\beta}}GE_{\nu_{\beta}}}.
\end{align*}
Then the contribution to bounded takes the form
\begin{align*}
    &\frac{1}{N^{b-1}q^{u}}\sum_{i_{1},...,i_{b}}\mbb{E}\bigl(\underline{(GF)^{l_{\beta+1}}GW}\bigr)_{\mathfrak{x},i_{1}}P_{x-1}Q_{y}RS^{2p-d}\\
    &=\frac{1}{N^{b}q^{u}}\sum_{i_{1},...,i_{b+1}}\mbb{E}\bigl((GF)^{l_{\beta+1}}G\bigr)_{\mathfrak{x},\hat{\imath}_{b+1}}\partial_{i_{1},i_{b+1}}(P_{x-1}Q_{y}RS^{2p-d})\\
    &+\sum_{r_{1}+s_{1}=2}^{L}\frac{\kappa_{r_{1},s_{1}+1}}{N^{b}q^{u+r_{1}+s_{1}-1}}\sum_{i_{1},...,i_{b+1}}\mbb{E}\partial_{i_{1},i_{b+1}}^{r_{1}}\bar{\partial}_{i_{1},i_{b+1}}^{s_{1}}\bigl((GF)^{l_{\beta+1}}G\bigr)_{\mathfrak{x},\hat{\imath}_{b+1}}P_{x-1}Q_{y}RS^{2p-d}\\
    &+O(N^{-D})
\end{align*}
For the first term on the right hand side, we note that the single derivative produces exactly one resolvent entry with left index $\hat{\imath}_{b+1}$. The sum over $i_{b+1}$ can then be bounded by Cauchy--Schwarz and the Ward identity, gaining a factor of $(N\eta)^{-1/2}$. Note that if the derivative acts on an entry of $(GF)^{m}G$ we need to apply Young's inequality with a different factor of $(GF)^{m}G$ (of which there will always be at least one) after Cauchy--Schwarz, e.g.
\begin{align*}
    &\left(\sum_{i_{b+1}}|((GF)^{l_{\beta+1}}G)_{\mathfrak{x},\hat{\imath}_{b+1}}||((GF)^{m}G)_{\hat{\imath}_{b+1},\mathfrak{y}}|\right)|((GF)^{m}G)_{\mathfrak{z},\mathfrak{w}}|\\
    &\prec\sqrt{\frac{N}{\eta^{l_{\beta+1}+1}}}\left(\frac{1}{N}\sum_{i_{b+1}}|((GF)^{m}G)_{\hat{\imath}_{b+1},\mathfrak{y}}|^{2}\right)^{1/2}|((GF)^{m}G)_{\mathfrak{z},\mathfrak{w}}|\\
    &\prec\sqrt{\frac{N}{\eta^{l_{\beta+1}+1}}}\left(\frac{1}{N}\sum_{i_{b+1}}((GF)^{m}G)_{\hat{\imath}_{b+1},\mathfrak{y}}|^{2}+|((GF)^{m}G)_{\mathfrak{z},\mathfrak{w}}|^{2}\right).
\end{align*}
In the last line we applied Young's inequality to obtain a homogeneous polynomial in entries of $(GF)^{m}G$. The remaining sum over $i_{1},...,i_{b}$ has a prefactor $N^{-b}$ so we can use the bound in Lemma \ref{lem:polynomialBound1} to ultimately conclude that
\begin{align*}
    &\mbb{E}\frac{1}{N^{b}q^{u}}\sum_{i_{1},...,i_{b+1}}\bigl((GF)^{l_{\beta+1}}G\bigr)_{\mathfrak{x},\hat{\imath}_{b+1}}\partial_{i_{1},i_{b+1}}(P_{x-1}Q_{y}RS^{2p-d})\\
    &\prec\frac{1}{q\eta}\left(\left(\frac{1_{\{m>M/2\}}+(\mc{E}^{iso}_{\eta,q})^{|\mc{O}_{x}|+u-1}}{\eta^{|\mbf{n}|/2}}\right)^{2p/(d-y)}+\Omega^{iso}_{t}(m,p)\right).
\end{align*}

The contributions of higher order cumulants are dealt with recursively. The main observation is that, up to factors consisting of products of traces $\Tr{(GF)^{n}GE_{\mu}}$, $\hat{m}$ and $\hat{u}$, the derivatives generate polynomials in $\mc{P}^{(1)}\cup\mc{P}^{(2)}$. Indeed, first note that derivatives acting on a trace $\Tr{(GF)^{n}GE_{\mu}}$ create an entry of $(GF)^{n}GE_{\mu}G$. The extra factor of $\eta^{-1}$ we incur from $E_{\nu}$ is compensated by the factor of $N^{-1}$ from the normalisation of the trace. Thus, whenever a derivative acts on $\Tr{(GF)^{n}GE_{\mu}}$ we gain a factor of $(\mc{E}^{iso}_{\eta,q})^{2}$. Second, $\partial_{i_{1},i_{b+1}}$ (resp. $\bar{\partial}_{i_{1},i_{b+1}}$) creates two new entries, one with right (resp. left) index $i_{1}$ and one with left (resp. right) index $\hat{\imath}_{b+1}$. The only diagonal entries that can be created are thus indexed by $i_{1}$ or $\hat{\imath}_{b+1}$. In particular, derivatives acting on $\bigl((GF)^{n}G\bigr)_{\mathfrak{x},\mathfrak{y}}$ can only result in a product of diagonal entries if $\mathfrak{x},\mathfrak{y}\in\{i_{1},\hat{\imath}_{b+1}\}$. If none of the initial off-diagonal entries satisfy this constraint, then the total number of off-diagonal entries is non-decreasing in the number of derivatives. In summary, we can always reduce
\begin{align*}
    \frac{1}{N^{b}q^{u+r_{1}+s_{1}-1}}\sum_{i_{1},...,i_{b+1}}\partial^{r_{1}}_{i_{1},i_{b+1}}\bar{\partial}^{s_{1}}_{i_{1},i_{b+1}}\bigl((GF)^{l_{\beta+1}}G\bigr)_{\mathfrak{x},\hat{\imath}_{b+1}}P_{x-1}Q_{y}RS^{2p-d}
\end{align*}
to a sum of terms of the form
\begin{align*}
    \frac{1}{N^{b}q^{u+r_{1}+s_{1}-1}}\sum_{i_{1},...,i_{b+1}}P_{x'}Q_{y'}\wt{R}S^{2p-d'},
\end{align*}
for some polynomial in $\mc{P}^{(1)}\cup\mc{P}^{(2)}$ and some product of traces $\wt{R}$.

If the derivatives generate a polynomial in $\mc{P}^{(1)}$, then we use Lemma \ref{lem:polynomialBound2}. Otherwise, we repeat the above procedure, replacing one of the off-diagonal entries of $(GF)^{n}G$ with $n\leq m/2$ using \eqref{eq:self5}. The contribution from non-underlined terms is bounded as before, and we perform a second cumulant expansion to deal with the underlined terms. At the $l$-th cumulant expansion, we gain a factor $N^{-1}q^{1-r_{l}-s_{l}}$ and the resulting terms take the form
\begin{align}
    \frac{1}{N^{b+l-1}q^{u+|\mbf{r}|+|\mbf{s}|-l}}\sum_{i_{1},...,i_{c+l}}P_{x}Q_{y}RS^{2p-d},\label{eq:lthCumulantExpansion}
\end{align}
where $\mbf{r},\mbf{s}\in\mbb{N}^{l}$ are such that $r_{j}+s_{j}\geq2$. We terminate after we have accumulated enough factors of $q^{-1}$ to afford the naive bound in \eqref{eq:naive}. Since $|\mbf{n}|+my\leq2mp$, the power of $\eta^{-1}$ in \eqref{eq:naive} is bounded above by a constant depending only on $m$ and $p$, so we can terminate after a finite number of steps depending only on $m,p$ and $\epsilon$ (recall that $q=N^{\epsilon}$). This leaves us with a finite number of terms of the form \eqref{eq:lthCumulantExpansion} where the polynomial belongs to $\mc{P}^{(1)}$, and we can use Lemma \ref{lem:polynomialBound2} to obtain the desired bound, noting that $|\mbf{r}|+|\mbf{s}|-l\geq l$. We can summarise this argument in the following algorithm:
\begin{enumerate}
\item start with polynomial in $\mc{P}^{(2)}$;
\item replace an off-diagonal entry of $(GF)^{n}G$ with $n\leq m/2$ and sharing a summation index with an entry of $(GF)^{m}G$ by repeatedly applying \eqref{eq:self5} until all matrix entries are entries of $\hat{M}$ or entries of an underlined term $\underline{(GF)^{n'}GW}\hat{M}$ with $n'\leq n$;
\item bound the contribution from non-underlined terms using the fact that $\hat{M}$ is diagonal in each $N\times N$ block;
\item introduce a new summation index by performing a cumulant expansion of underlined terms;
\item bound the contribution from the second cumulant by Cauchy--Schwarz and the Ward identity;
\item higher order cumulants correspond to polynomials in $\mc{P}^{(1)}\cup\mc{P}^{(2)}$; if a term is in $\mc{P}^{(1)}$ then use Lemma \ref{lem:polynomialBound2}, otherwise return to Step 1.
\end{enumerate}
\end{proof}

Alongside the bounds on polynomials in resolvent entries, we will need the following fluctuation averaging bound.
\begin{lemma}\label{lem:fluctuationAveraging}
Let $z\in\mbb{D}$, $m\in\mbb{N}$ and assume that $\mc{L}^{iso}_{z}(l,\delta,\tau)$ is true for $l=0,1,...,m$. Then
\begin{equation}
\begin{aligned}
    \max_{\mathfrak{i}\in[2N]}\left|\sum_{j}(G_{1}B_{1}\cdots G_{m+1})_{\mathfrak{i},j}\right|&\prec\frac{1}{\eta^{m-a/2+1}},\\
    \max_{\mathfrak{i}\in[2N]}\left|\sum_{j}(G_{1}B_{1}\cdots G_{m+1})_{\mathfrak{i},\hat{\jmath}}\right|&\prec\frac{1}{\eta^{m-a/2+1}},
\end{aligned}\label{eq:fluctuationAveraging}
\end{equation}
uniformly in $w_{j}\in\mc{D}(\delta,\tau)$.
\end{lemma}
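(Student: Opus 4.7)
I induct on $m$, with base case $m=0$. The identification
\begin{align*}
\sum_{\mathfrak{j}\in[N]}(G_{1}B_{1}\cdots G_{m+1})_{\mathfrak{i},\mathfrak{j}}&=(YF\mathbf{1})_{\mathfrak{i}}, &
\sum_{\mathfrak{j}\in[N]}(G_{1}B_{1}\cdots G_{m+1})_{\mathfrak{i},\hat{\mathfrak{j}}}&=(YF^{*}\mathbf{1})_{\mathfrak{i}},
\end{align*}
with $Y:=G_{1}B_{1}\cdots G_{m+1}$, uses $F\mathbf{1}=(\mathbf{1}_{N},\mathbf{0})^{T}$ and $F^{*}\mathbf{1}=(\mathbf{0},\mathbf{1}_{N})^{T}$ to read each row-sum as a single entry of $Y$ times $F$ or $F^{*}$ applied to the all-ones vector. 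The two cases are parallel, so I focus on $F$. Splitting $Y=M+E$ with $M:=M_{z}(w_{1},B_{1},\ldots,w_{m+1})\in\mbb{H}$ via $\mc{L}^{iso}_{z}(m,\delta,\tau)$, the deterministic piece $(MF\mathbf{1})_{\mathfrak{i}}$ is a single block-scalar entry of $MF\in\mbb{H}$ and so
\begin{align*}
\bigl|(MF\mathbf{1})_{\mathfrak{i}}\bigr|\le 2\|M\|\lesssim\eta^{-(m-\lceil a/2\rceil)}\le\eta^{-(m-a/2+1)}
\end{align*}
by $\eta\le 10$ and \Cref{lem:deterministicBounds}. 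It therefore suffices to control the fluctuation $T:=\sum_{\mathfrak{j}\in[N]}E_{\mathfrak{i},\mathfrak{j}}$.

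\textbf{Inductive reduction.} Combining the self-consistent equations \eqref{eq:self1} and \eqref{eq:self3}, I expand
\begin{align*}
Y=\hat{M}_{1}B_{1}G_{2}\cdots G_{m+1}+\hat{M}_{1}\sum_{l=2}^{m+1}\sum_{\nu=\pm}\nu\Tr{G_{1}B_{1}\cdots G_{l}E_{\nu}}E_{\nu}G_{l}B_{l}\cdots G_{m+1}-\hat{M}_{1}\,\underline{WG_{1}B_{1}\cdots G_{m+1}}.
\end{align*}
Both $\hat{M}_{1}B_{1}$ and $\hat{M}_{1}E_{\nu}$ lie in $\mbb{H}$ with $O(1)$ norm (by \Cref{thm:singleLL}), so summing the first summand over $\mathfrak{j}\in[N]$ reduces, after pulling out a block-scalar prefactor, to the row-sum of the shorter chain $G_{2}\cdots G_{m+1}$; the inductive hypothesis bounds this by $\eta^{-(m-a''/2)}$ with $a''=a-[B_{1}\in\{F,F^{*}\}]\in\{a,a-1\}$, and this is $\le\eta^{-(m-a/2+1)}$. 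The $l$-th piece of the second summand is a trace times a row-sum: \Cref{lem:deterministicBounds} controls the deterministic part of the trace by $\eta^{-(l-\lceil a_{1,l}/2\rceil-1)}$ (with $a_{1,l}=|\{j<l:B_{j}\in\{F,F^{*}\}\}|$), the inductive hypothesis gives $\eta^{-(m-l+2-(a-a_{1,l})/2)}$ for the row-sum, and the two exponents combine to exactly $-(m-a/2+1)$ (or $-(m-a/2+1/2)$ when $a_{1,l}$ is odd), both within target.

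\textbf{Renormalised term and main obstacle.} Only $Z:=\sum_{\mathfrak{j}\in[N]}(\hat{M}_{1}\,\underline{WY})_{\mathfrak{i},\mathfrak{j}}$ remains. I estimate $\mbb{E}|Z|^{2p}$ for each fixed $p\in\mbb{N}$ by cumulant-expanding one factor of $Z$ in the $W$-entries appearing inside $\underline{WY}$: the underline kills the Gaussian second-order contribution, and the surviving first-order $1/N$-piece together with higher cumulants $r+s\geq 2$ generate polynomials in resolvent entries of the form classified in \Cref{def:polynomials}. Iterating the expansion a finite number of times (to accumulate enough $q^{-1}$-factors to absorb the powers of $\eta^{-1}$ in the naive bound) places every term in $\mc{P}^{(1)}$, to which \Cref{lem:polynomialBound2} applies together with the assumed $\mc{L}^{iso}_{z}(l,\delta,\tau)$ for $l\le m$. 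Markov's inequality then converts the moment bound into the desired $\prec$-statement, and the base case $m=0$ is the same argument with the inductive step vacuous. The principal obstacle is this Step 3: summing over the fresh summation indices created by each $W$-derivative must produce exactly the additional $\eta^{-1}$ over $\|M\|$ permitted by the target, realised by Cauchy--Schwarz plus the Ward identity (the mechanism driving \Cref{lem:polynomialBound2}), and one must verify that no term generated by the iterated expansion requires an iso law of length exceeding $m+1$.
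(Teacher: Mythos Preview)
Your inductive strategy and use of the self-consistent expansion \eqref{eq:self1}--\eqref{eq:self3} match the paper's proof, and the treatment of the non-underlined terms is essentially correct (with the minor caveat that you should bound the \emph{full} trace $\Tr{G_{1}B_{1}\cdots G_{l}E_{\nu}}$, not only its deterministic part; this follows from the diagonal entries in $\mc{L}^{iso}_{z}(l,\delta,\tau)$, as in the paper's proof of \Cref{lem:polynomialBound3}).

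The genuine gap is in Step 3. Write $\mc{G}_{\mathfrak{k}}:=\sum_{j}(G_{1}B_{1}\cdots G_{m+1})_{\mathfrak{k},j}$. When you cumulant-expand $\mbb{E}|Z|^{2p}$, the second-cumulant contribution (your ``surviving $1/N$-piece'', where the derivative acts on $Z^{2p-1}$) produces
\[
\frac{1}{N}\sum_{k}\mc{G}_{\hat{k}}\,\bar{\partial}_{i,k}\bigl(\mc{G}_{i}^{2p-1}\bigr),
\]
which contains the full-length row sum $\mc{G}_{\hat{k}}$ at the fresh index $\hat{k}$ --- precisely the quantity you are trying to bound. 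Your appeal to \Cref{lem:polynomialBound2} does not resolve this: that lemma and the class $\mc{P}^{(1)}$ of \Cref{def:polynomials} are formulated for polynomials in \emph{single} resolvent-chain entries, and the bound returns the single-entry moment $\Omega^{iso}_{t}(m,p)$, not a moment of $\mc{G}$. Iterating the cumulant expansion gains $q^{-1}$ only from cumulants of order $\geq 3$; the second-cumulant term above gains nothing, so the self-reference does not disappear.

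The paper handles this by introducing an a priori bound $\mc{G}_{*}:=\max_{\mathfrak{k}}|\mc{G}_{\mathfrak{k}}|\prec\Phi$, observing that
\[
\bar{\partial}_{i,k}\mc{G}_{i}+G_{i,\hat{k}}\mc{G}_{i}=-\sum_{n=1}^{m}\bigl((GB)^{n}G\bigr)_{i,\hat{k}}\sum_{j}\bigl((GB)^{m-n}G\bigr)_{i,j}
\]
is controlled by the induction hypothesis, and using Cauchy--Schwarz on $\frac{1}{N}\sum_{k}|G_{i,\hat{k}}||\mc{G}_{\hat{k}}|$ to obtain a self-improving inequality of the form
\[
\mc{G}_{*}\prec\eta^{-(m-a/2+1)}+\sqrt{\Phi\,\eta^{-(m-a/2+1)}}+\bigl((N\eta)^{-1/2}+q^{-1/2}\bigr)\Phi,
\]
which is then iterated in $\Phi$ (starting from the trivial $\Phi=N^{C}$) to close. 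Your sketch does not contain this self-bounding mechanism, and as written the argument does not close.
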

\begin{proof}
The argument is similar to \cite[Lemma 3.8]{he_edge_2023}. Again, for ease of notation we assume that $B_{j}=F$ and drop subscripts from resolvents. Let
\begin{align*}
    \mc{G}_{\mathfrak{i}}&:=\sum_{j}\bigl((GF)^{m}G\bigr)_{\mathfrak{i},j},\\
    \mc{G}_{*}&:=\max_{\mathfrak{i}}|\mc{G}_{\mathfrak{i}}|,
\end{align*}
and consider the $2p$-th absolute moment of $\mc{G}_{i}$:
\begin{align*}
    \mbb{E}|\mc{G}_{i}|^{2p}&=\mbb{E}\sum_{j}\bigl((GF)^{m}G\bigr)_{i,j}\mc{G}_{i}^{p-1}\bar{\mc{G}}_{i}^{p}.
\end{align*}
We will prove the bound
\begin{align}
    \mc{G}_{*}&\prec\frac{1}{\eta^{m/2+1}}\label{eq:G_*bound}
\end{align}
by induction on $m$. Assume that the above bound is true with $m$ replaced by $l=0,...,m-1$ and that we have the a priori bound $\mc{G}_{*}\prec\Phi$. We replace the first factor of $\mc{G}_{i}$ using \eqref{eq:self1} and \eqref{eq:self3}. Since the matrix entries in non-underlined terms are of the form $(GF)^{l}G$ for $l\leq m-1$, we can use the induction hypothesis and $\mc{L}^{iso}_{z}(l,\delta,\tau)$ for $l=1,...,m$ to obtain
\begin{align*}
    \mbb{E}\sum_{j}\Tr{(GF)^{l}GE_{\mu}}\bigl((GF)^{m-l}G\bigr)_{ij}|\mc{G}_{i}|^{2p-1}&\prec\frac{1}{\eta^{m/2+1}}\mbb{E}|\mc{G}_{i}|^{2p-1}.
\end{align*}
This leaves us with the underlined terms:
\begin{align*}
    \mbb{E}\sum_{j}\bigl(\underline{W(GF)^{m}G}\bigr)_{i,j}\mc{G}_{i}^{p-1}\bar{\mc{G}}_{i}^{p}&=\mbb{E}\sum_{k}\underline{X_{i,k}\mc{G}_{\hat{k}}}\mc{G}_{i}^{p-1}\bar{\mc{G}}_{i}^{p}\\
    &=\frac{1}{N}\mbb{E}\sum_{k}\mc{G}_{\hat{k}}\bar{\partial}_{i,k}\bigl(\mc{G}_{i}^{p-1}\bar{\mc{G}}_{i}^{p}\bigr)\\
    &+\frac{1}{N}\sum_{r+s=2}^{L}\frac{\kappa_{r+1,s}}{q^{r+s-1}}\mbb{E}\sum_{k}\partial^{r}_{i,k}\bar{\partial}^{s}_{i,k}\bigl(\mc{G}_{\hat{k}}\mc{G}_{i}^{p-1}\bar{\mc{G}}_{i}^{p}\bigr)\\
    &+O(N^{-D}).
\end{align*}
The derivatives of $\mc{G}_{i}$ are given by
\begin{align*}
    \partial_{i,k}\mc{G}_{i}&=-\sum_{n=0}^{m}\bigl((GF)^{n}G\bigr)_{i,i}\sum_{j}\bigl((GF)^{m-n}G\bigr)_{\hat{k},j},\\
    \bar{\partial}_{i,k}\mc{G}_{i}&=-\sum_{n=0}^{m}\bigl((GF)^{n}G\bigr)_{i,\hat{k}}\sum_{j}\bigl((GF)^{m-n}G\bigr)_{i,j}
\end{align*}
Using the induction hypothesis, $\mc{L}^{iso}_{z}(l,l,\delta,\tau)$ for $l=1,...,m$ and the a priori bound $\mc{G}_{*}\prec\Phi$ we have
\begin{align*}
    \{\partial_{i,k},\bar{\partial}_{i,k}\}^{r}\mc{G}_{\mathfrak{i}}&\prec\frac{1}{\eta^{m/2+1}}+\Phi.
\end{align*}
Thus the sum over $r+s\geq2$ is easily dealt with:
\begin{align*}
    \frac{\kappa_{r+1,s}}{Nq^{r+s-1}}\mbb{E}\sum_{k}\partial^{r}_{i,k}\bar{\partial}^{s}_{i,k}\bigl(\mc{G}_{\hat{k}}\mc{G}_{i}^{p-1}\bar{\mc{G}}_{i}^{p}\bigr)&\prec\frac{1}{q^{r+s-1}}\cdot\sum_{a=1}^{(r+s)\wedge2p}\left(\frac{1}{\eta^{m/2+1}}+\Phi\right)^{a}\mbb{E}|\mc{G}_{i}|^{2p-a}\\
    &\prec\sum_{a=1}^{(r+s)\wedge2p}\left(\frac{1}{\sqrt{q}\eta^{m/2+1}}+\frac{\Phi}{\sqrt{q}}\right)^{a}\mbb{E}|\mc{G}_{i}|^{2p-a}.
\end{align*}
For the first derivative, we observe that
\begin{align*}
    \bar{\partial}_{i,k}\mc{G}_{i}+G_{i,\hat{k}}\mc{G}_{i}&=-\sum_{n=1}^{m}\bigl((GF)^{n}G\bigr)_{i,\hat{k}}\sum_{j}\bigl((GF)^{m-n}G\bigr)_{i,j}\\
    &\prec\frac{1}{\eta^{m/2+1}},
\end{align*}
by the induction hypothesis. Therefore we have
\begin{align*}
    \frac{1}{N}\mbb{E}\sum_{k}\mc{G}_{\hat{k}}\bar{\partial}_{i,k}\bigl(\mc{G}_{i}\bigr)\mc{G}^{p-2}\bar{\mc{G}}_{i}^{p}+\frac{1}{N}\mbb{E}\sum_{k}G_{i,\hat{k}}\mc{G}_{\hat{k}}\mc{G}_{i}^{p-1}\bar{\mc{G}}_{i}^{p}&\prec\frac{\Phi}{\eta^{m/2+1}}\mbb{E}|\mc{G}_{i}|^{2p-2}.
\end{align*}
For the second term on the left hand side we use Cauchy--Schwarz and the a priori bound $|\mc{G}_{\hat{k}}|\prec\Phi$:
\begin{align*}
    \frac{1}{N}\mbb{E}\sum_{k}|G_{i,\hat{k}}|\mc{G}_{\hat{k}}||\mc{G}_{i}|^{2p-1}&\prec\frac{\Phi}{\sqrt{N\eta}}\mbb{E}|\mc{G}_{i}|^{2p-1}.
\end{align*}
Thus we find
\begin{align*}
    \frac{1}{N}\mbb{E}\sum_{k}\mc{G}_{\hat{k}}\bar{\partial}_{i,k}\bigl(\mc{G}_{i}^{p-1}\bar{\mc{G}}_{i}^{p}\bigr)&\prec\left(\frac{1}{\eta^{m/2+1}}+\frac{\Phi}{\sqrt{N\eta}}\right)\mbb{E}|\mc{G}_{i}|^{2p-1}+\frac{\Phi}{\eta^{m/2+1}}\mbb{E}|\mc{G}_{i}|^{2p-2}.
\end{align*}
Combined with the estimate for the sum over $r+s\geq2$ we have
\begin{align*}
    \mbb{E}|\mc{G}_{i}|^{2p}&\prec\left(\frac{1}{\eta^{m/2+1}}+\frac{\Phi}{\sqrt{N\eta}}\right)\mbb{E}|\mc{G}_{i}|^{2p-1}+\frac{\Phi}{\eta^{m/2+1}}\mbb{E}|\mc{G}_{i}|^{2p-2}+\sum_{a=1}^{2p}\left(\frac{1}{\sqrt{q}\eta^{m/2+1}}+\frac{\Phi}{\sqrt{q}}\right)^{a}\mbb{E}|\mc{G}_{i}|^{2p-a},
\end{align*}
which implies that
\begin{align*}
    \mc{G}_{*}&\prec\frac{1}{\eta^{m/2+1}}+\sqrt{\frac{\Phi}{\eta^{m/2+1}}}+\left(\frac{1}{\sqrt{N\eta}}+\frac{1}{\sqrt{q}}\right)\Phi,
\end{align*}
whenever $\mc{G}_{*}\prec\Phi$. Iterating this a finite number of times (since we can take $\Phi=N^{C}$ for some constant $C>0$) we obtain \eqref{eq:G_*bound}. 

For the base case we have a stronger bound
\begin{align}
    \sum_{j}G_{ij}&\prec\left(\frac{1}{\sqrt{N\eta}}+\frac{1}{q}\right)\frac{1}{\eta},
\end{align}
which is proven in a similar (and simpler way). The contribution of the second cumulant is bounded by Cauchy--Schwarz and the fact that $|G_{\mathfrak{i},\mathfrak{j}}|\prec1$, e.g.
\begin{align*}
    \frac{1}{N}\sum_{j,k}G_{\hat{k},j}\bar{\partial}_{ik}\left(\sum_{l}G_{il}\right)^{2p-1}&=-\frac{1}{N}\sum_{j,k}G_{\hat{k},j}G_{i,\hat{k}}G_{i,l}\left(\sum_{m}G_{im}\right)^{2p-2}\\
    &=-\frac{1}{2N}\sum_{j}(G(E_{+}-E_{-})G)_{i,j}\left(\sum_{l}G_{i,l}\right)^{2p-1}\\
    &\lesssim\frac{1}{N^{1/2}\eta^{3/2}}\Bigl|\sum_{j}G_{i,j}\Bigr|^{2p-1}.
\end{align*}
The contribution of higher order cumulants is bounded using the extra factors of $q^{-1}$.
\end{proof}

We are now in a position to prove Lemma \ref{lem:isoGFT}. In summary, applying It\^{o}'s lemma and a cumulant expansion, we obtain an expression for $\partial_{t}\mbb{E}|S|^{2p}$ as a sum of derivatives of $|S|^{2p}$ with respect to matrix entries. We reorganise this into terms consisting of a product of a polynomial in the sense of Definition \ref{def:polynomials} and a lower power of $S$. Whenever this polynomial is in $\mc{P}^{(1)}\cup\mc{P}^{(2)}$ we can use Lemmas \ref{lem:polynomialBound2} and \ref{lem:polynomialBound3}, which in most cases is sufficient to obtain the desired bound. The remaining terms are reduced to polynomials in $\mc{P}^{(1)}\cup\mc{P}^{(2)}$ by an iterative cumulant expansion similar to that in the proof of Lemma \ref{lem:polynomialBound3}.
\begin{proof}[Proof of Lemma \ref{lem:isoGFT}]
We start from It\^{o}'s lemma and the cumulant expansion: for any $D>0$, there is an $L\equiv L(D,p,m)$ such that
\begin{align}
    \frac{\diff}{\diff t}\mbb{E}|S|^{2p}&=\sum_{r+s=2}^{L}\frac{\kappa_{r+1,s}}{Nq^{r+s-1}}\sum_{i_{1},i_{2}=1}^{N}\mbb{E}(\partial_{i_{1},i_{2}}^{r+1}\bar{\partial}_{i_{1},i_{2}}^{s}+\partial_{i_{1},i_{2}}^{r}\bar{\partial}_{i_{1},i_{2}}^{s+1})|S|^{2p}+O(N^{-D}),
\end{align}
where we have assumed for ease of notation that $\kappa_{r+1,s}=\kappa_{r,s+1}$. Note that the sum starts at $r+s=2$ since the second cumulant is preserved by the matrix Ornstein--Uhlenbeck process. The distinction between $S$ and $\bar{S}$ is not important and so we will ignore it in the following, i.e. we will write $S^{a}\bar{S}^{b}$ as $S^{a+b}$. As in the proof of Lemma \ref{lem:polynomialBound3}, we will assume that $B_{i}=F$ and drop subscripts from resolvents $G_{j}$; the general case follows in exactly the same way.

Using the basic identities
\begin{align*}
    \partial_{ij}((GF)^{n}G)_{\mathfrak{i},\mathfrak{j}}&=-\sum_{l=0}^{n}((GF)^{n-l}G)_{\mathfrak{i},i}((GF)^{l}G)_{\hat{\jmath},\mathfrak{j}},\\
    \bar{\partial}_{ij}((GF)^{n}G)_{\mathfrak{i},\mathfrak{j}}&=-\sum_{l=0}^{n}((GF)^{n-l}G)_{\mathfrak{i},\hat{\jmath}}((GF)^{l}G)_{i,\mathfrak{j}},
\end{align*}
and the notation of Definition \ref{def:polynomials}, for a given $r+s\geq2$ and $d\in[r+s+1]$ we can write a term in the cumulant expansion in the form
\begin{align}
    \frac{1}{Nq^{r+s-1}}\sum_{i_{1},i_{2}}P_{x}(\mbf{n},\mathfrak{i},\mathfrak{j})Q_{y}(\mathfrak{k},\mathfrak{l})S^{2p-d},\label{eq:cumulantTerm}
\end{align}
with $y\leq d$, $x+y=r+s+2$ and $|\mbf{n}|=m(d-y)$.

Observe that there are at least $d$ off-diagonal entries of $(GF)^{n}G$ with $n\leq m/2$ and so all polynomials corresponding to $d\geq2$ belong to $\mc{P}^{(1)}\cup\mc{P}^{(2)}$. Thus we can use Lemmas \ref{lem:polynomialBound2} and \ref{lem:polynomialBound3} with $b=2$ and $u=r+s-1$ to obtain the bound
\begin{align*}
    \frac{1}{q\eta}\left(\frac{1_{\{m>M/2\}}+(\mc{E}^{iso}_{\eta,q})^{2p(r+s+|\mc{O}_{x}|-4)/(d-y)}}{\eta^{mp}}+\Omega^{iso}_{t}(m,p)\right).
\end{align*}
Since $|\mc{O}_{x}|\geq2d-y$ and $d\geq2$, we have $r+s+|\mc{O}_{x}|-4\geq d-y$ and the desired bound follows.

Now let $d=1$; in this case all derivatives act on a single copy of $S$. If the resulting polynomial contains a factor of $(GF)^{m}G$ (i.e. $y=1$) and belongs to $\mc{P}^{(2)}$ (note that it is not possible to belong to $\mc{P}^{(1)}$), then we use Lemma \ref{lem:polynomialBound3} with $d=y=1$. If $y=0$, then we need to gain a factor of $\mc{E}^{iso}_{\eta,q}$ so Lemma \ref{lem:polynomialBound2} yields the desired bound only if there are at least three off-diagonal entries of $(GF)^{n}G$ with $n\leq m-1$ or $r+s\geq3$ (i.e. at least four derivatives). We are left with estimating two kinds of term:
\begin{enumerate}[label=\roman*)]
\item those with $y=1$ and exactly one off-diagonal entry of $G$; 
\item those with $y=0$ and exactly four entries with summation indices, two of which are off-diagonal.
\end{enumerate}

\paragraph{Case i)}
A general term satisfying i) has the form
\begin{align}
    \frac{1}{Nq^{r_{1}+s_{1}-1}}\sum_{i_{1},i_{2}}G_{\mathfrak{i},\mathfrak{j}}G_{\mathfrak{l}_{1},\mathfrak{l}_{1}}\cdots G_{\mathfrak{l}_{x},\mathfrak{l}_{x}}((GF)^{m}G)_{\mathfrak{k},\mathfrak{l}}S^{2p-1},\label{eq:casei}
\end{align}
where $x=r_{1}+s_{1}$, $\mathfrak{l}_{j}\in\{i_{1},\hat{\imath}_{2}\}$ and $|\{\mathfrak{i},\mathfrak{j}\}\cap\{i_{1},\hat{\imath}_{2}\}|=|\{\mathfrak{k},\mathfrak{l}\}\cap\{i_{1},\hat{\imath}_{2}\}|=1$. Our approach to these terms will be to introduce a new summation index in $G_{\mathfrak{i},\mathfrak{j}}$ through a recursive cumulant expansion as in Lemma \ref{lem:polynomialBound3}. 

For concreteness we assume that $\mathfrak{i}=i_{1}$ and $\mathfrak{j}=j\in[N]$. By \eqref{eq:self2} we have
\begin{align*}
    G_{i_{1},j}&=\hat{m}\delta_{i_{1},j}-\hat{m}(\underline{GW})_{i_{1},j}+\bar{z}\hat{u}(\underline{GW})_{i_{1},\hat{\jmath}}.
\end{align*}
The first term causes the sum over $i_{1}$ to collapse, leading to the bound
\begin{align*}
    \frac{1}{Nq^{r_{1}+s_{1}-1}}\sum_{i_{2}}|((GF)^{m}G)_{\mathfrak{k},\mathfrak{l}}|S|^{2p-1}\prec\frac{\Omega^{iso}_{t}(m,p)}{q^{r_{1}+s_{1}-1}}.
\end{align*}

The procedure to bound the contribution of underlined terms is similar to that in Lemma \ref{lem:polynomialBound3}. Consider the contribution of $\hat{m}(\underline{GW})_{i_{1},j}$: by a cumulant expansion, this is equal to
\begin{align*}
    &\frac{1}{N^{2}q^{r_{1}+s_{1}-1}}\mbb{E}\sum_{i_{1},i_{2},i_{3}}G_{i_{1},\hat{\imath}_{3}}\partial_{j,i_{3}}\hat{m}G_{\mathfrak{l}_{1},\mathfrak{l}_{1}}\cdots G_{\mathfrak{l}_{x},\mathfrak{l}_{x}}((GF)^{m}G)_{\mathfrak{k},\mathfrak{l}}S^{2p-1}\\
    &+\frac{1}{N^{2}}\sum_{r_{2}+s_{2}=2}^{L}\frac{\kappa_{r_{2},s_{2}+1}}{q^{|\mbf{r}|+|\mbf{s}|-2}}\mbb{E}\sum_{i_{1},i_{2},i_{3}}\partial_{j,i_{3}}^{r}\bar{\partial}_{j,i_{3}}^{s}\hat{m} G_{i_{1},\hat{\imath}_{3}}G_{\mathfrak{l}_{1},\mathfrak{l}_{1}}\cdots G_{\mathfrak{l}_{x},\mathfrak{l}_{x}}((GF)^{m}G)_{\mathfrak{k},\mathfrak{l}}S^{2p-1}\\
    &+O(N^{-D}).
\end{align*}
The derivative in the first line creates an entry with left index $\hat{\imath}_{3}$ and so the sum over $i_{3}$ can be bounded by Cauchy--Schwarz and the Ward identity. For the higher order cumulants, we observe that any derivative acting on entries other than $G_{i_{1},\hat{\imath}_{3}}$ and $((GF)^{m}G)_{\mathfrak{k},\mathfrak{l}}$ necessarily creates at least one off-diagonal entry of $(GF)^{n}G$ with $n\leq m/2$ and a polynomial in $\mc{P}^{(1)}\cup\mc{P}^{(2)}$, which can be bounded by Lemmas \ref{lem:polynomialBound2} and \ref{lem:polynomialBound3}. We treat the remaining terms, which are of the same form as \eqref{eq:casei} with additional factors of $q^{-1}$, by repeating this procedure.

\paragraph{Case ii)}
A general term satisfying ii) has the form
\begin{align*}
    \frac{1}{Nq}\sum_{i_{1},i_{2}}((GF)^{n_{1}}G)_{\mathfrak{x},\mathfrak{j}}((GF)^{n_{2}}G)_{i_{1},i_{1}}((GF)^{n_{3}}G)_{\hat{\imath}_{2},\hat{\imath}_{2}}((GF)^{n_{4}}G)_{\mathfrak{k},\mathfrak{y}}S^{2p-1},
\end{align*}
where $n_{j}\in[m-1]$, $\sum_{j}n_{j}=m$ and $\mathfrak{j},\mathfrak{k}\in\{i_{1},\hat{\imath}_{2}\}$ such that $\mathfrak{j}\neq\mathfrak{k}$. Thus at least one of $n_{1}$ and $n_{4}$ is at most $\lfloor m/2-2\rfloor$. For concreteness, we assume that $n_{1}\leq\lfloor m/2-2\rfloor$ and $\mathfrak{j}=i_{1}$. Compared with case i), we replace the diagonal entry $((GF)^{n_{2}}G)_{i_{1},i_{1}}$ in order to apply the fluctuation averaging bound from Lemma \ref{lem:fluctuationAveraging} to $((GF)^{n_{1}}G)_{\mathfrak{x},i_{1}}$, which we can do since $n_{1}\leq \lfloor m/2-2\rfloor$.

From \eqref{eq:self4} we have 
\begin{align}
    ((GF)^{n_{2}}G)_{i_{1},i_{1}}&=-\bar{z}\hat{u}((GF)^{n_{2}-1}G)_{i_{1},i_{1}}\nonumber\\
    &+\frac{\hat{m}}{2}\sum_{l=0}^{n_{2}-1}\Tr{(GF)^{n_{2}-l}G(E_{+}-E_{-})}((GF)^{l}G)_{i_{1},i_{1}}\nonumber\\
    &-\frac{\bar{z}\hat{u}}{2}\sum_{l=0}^{n_{2}-1}\Tr{(GF)^{n_{2}-l}G(E_{+}+E_{-})}((GF)^{l}G)_{i_{1},\hat{\imath}_{1}}\nonumber\\
    &-\bigl(\underline{(GF)^{n_{2}}GW}\hat{M}\bigr)_{i_{1},i_{1}}.\label{eq:replacement}
\end{align}
The resolvent entries in the non-underlined terms have at most $n_{2}-1$ factors of $F$, so we can iterate this identity until all matrix entries are either entries of $\hat{M}$ (multiplied by products of traces of resolvent chains) or entries of underlined terms. A general non-underlined term is of the form
\begin{align}
    (-\bar{z}\hat{u})^{\alpha}\hat{m}^{\beta+1}\Tr{(GF)^{l_{1}}GE_{j_{1}}}\cdots\Tr{(GF)^{l_{\beta}}GE_{j_{\beta}}},
\end{align}
where $\alpha+\beta=n_{2}$, $l_{j}\geq1$ and $\sum_{j}l_{j}\leq n_{2}$. A general underlined term is of the form
\begin{align}
    (-\bar{z}\hat{u})^{\alpha}\hat{m}^{\beta}\Tr{(GF)^{l_{1}}GE_{j_{1}}}\cdots\Tr{(GF)^{l_{\beta}}GE_{j_{\beta}}}(\underline{(GF)^{l_{\beta+1}}GW})_{i_{1},i_{1}},
\end{align}
where $l_{j}$ is as before but now $\alpha+\beta\leq n_{2}-1$. We can estimate the products of traces using $\mc{L}^{iso}_{z}(l,\delta,\tau)$ for $l=1,...,m-1$ and the deterministic bounds in Lemma \ref{lem:deterministicBounds}:
\begin{align}
    |(-\bar{z}\hat{u})^{\alpha}\hat{m}^{\beta+1}\Tr{(GF)^{l_{1}}GE_{j_{1}}}\cdots\Tr{(GF)^{l_{\beta}}GE_{j_{\beta}}}|&\prec\frac{1}{\eta^{\frac{1}{2}\sum_{u=1}^{\beta}l_{u}}}.
\end{align}
The non-underlined terms are independent of the summation indices and so can be brought outside the sum, leaving a mutliple of the sum
\begin{align*}
    \frac{1}{Nq}\sum_{i_{1},i_{2}}((GF)^{n_{1}}G)_{\mathfrak{x},\mathfrak{j}}((GF)^{n_{3}}G)_{\hat{\imath}_{2},\hat{\imath}_{2}}((GF)^{n_{4}}G)_{\mathfrak{k},\mathfrak{y}}&\prec\frac{1}{q\eta}\cdot\frac{1}{\sqrt{N\eta}}\cdot\frac{1}{\eta^{(n_{1}+n_{4})/2}},
\end{align*}
where we have used Cauchy--Schwarz and the Ward identity for the sum over $i_{2}$ and Lemma \ref{lem:fluctuationAveraging} for the sum over $i_{1}$ since $\mathfrak{j}\neq\mathfrak{k}$. The underlined terms are bounded in exactly the same way as in case i).
\end{proof}

\subsection{Proof of Proposition \ref{thm:multiLL}}
Recall that $q=N^{\epsilon}$. Let $M\in\mbb{N}$ be even. The assumptions in Lemmas \ref{lem:isoGFT} and \ref{lem:avGFT} dictate that we should prove $\mc{L}^{iso}_{z}(l,\delta,\tau)$ before $\mc{L}^{av}_{z}(l,\delta,\tau)$. Let $\tau_{1}=1-\epsilon$ and $\delta_{1}>0$ to be specified later and set $\mc{D}_{1}:=\mc{D}(\delta_{1},\tau_{1})$. Observe that any $w\in\mc{D}_{1}$ satisfies $q|\Im w|\geq1$. From the reversed dynamics in \eqref{eq:reverse1}, we have
\begin{align}
    w_{-t}&=w+\frac{tm_{z}(w)}{2}+O(t^{2}).\label{eq:reverse2}
\end{align}
We also have $|\Im m_{z}(w)|>c>0$ when $|z|<1-c'$ and $|\Re w|<\delta<2$. Thus, for any $w\in\mc{D}_{1}$ and $z\in\mbb{D}$, we can find a $\delta'>0$, $w_{0}\in\mc{D}(\delta',1)$, $z_{0}\in\mbb{C}$ and a $t<T(w_{0})$ such that $w_{t}=w$ and $z_{t}=z$. By the global law in Lemma \ref{lem:global}, the assumptions in Proposition \ref{prop:zig} are satisfied and thus the desired local law holds at time $t$, when the matrix has a Gaussian component of variance $t$. Removing the Gaussian component using Lemma \ref{lem:isoGFT} and Gr\"{o}nwall's inequality, we extend the local law to $\mc{D}_{1}$ and thus prove $\mc{L}^{iso}_{z}(1,\delta_{1},\tau_{1})$. We repeat these steps to first prove $\mc{L}^{iso}_{z}(m,\delta_{1},\tau_{1})$ and then $\mc{L}^{av}_{z}(m,\delta_{1},\tau_{1})$ (for which we use Lemma \ref{lem:avGFT}) for $m=1,...,M$.

Now we set $\tau_{2}=\tau_{1}-\epsilon$ and $\mc{D}_{2}:=\mc{D}(\delta_{2},\tau_{2})$ with $\delta_{2}\in(0,\delta_{1})$ to be chosen later. For $|z|<1-\delta$ and $w\in\mc{D}_{2}$, we have $|\Im m_{z}(w)|>c>0$. From this and \eqref{eq:reverse1} it follows that we can reach $|\Im w|\geq N^{-1+\tau_{2}}$ from a point $w_{0}\in\mc{D}_{1}$ after time
\begin{align}
    t&\lesssim N^{-1+\tau_{1}}=N^{-\epsilon},\label{eq:t}
\end{align}
with $t<T(w_{0})$. Thus, for $\delta_{2}=\delta_{1}-O(N^{-\epsilon})$ and any $w\in\mc{D}_{2}$, we can find a $w_{0}\in\mc{D}_{1}$ and $z_{0}\in\mbb{D}$ such that $w_{t}=w$ and $z_{t}=z$. Applying Proposition \ref{prop:zig} with the local laws $\mc{L}^{av/iso}_{z}(m,\delta_{1},\tau_{1})$ for $m=1,...,M$ as input, we extend these local laws to $\mc{D}_{2}$ for the Gaussian-divisible $X_{t}$. Since $t\lesssim q|\Im w|$ for $w\in\mc{D}_{2}$ by \eqref{eq:t}, we can use Lemmas \ref{lem:isoGFT} and \ref{lem:avGFT} and Gr\"{o}nwall's inequality to remove the Gaussian component. Repeating these steps we obtain $\mc{L}^{iso/av}_{z}(m,\delta_{2},\tau_{2})$ for $m=1,...,M$.

Continuing in this way, at the $j$th step we obtain $\mc{L}^{iso/av}_{z}(m,\delta_{j},\tau_{j})$, where $\tau_{j}=1-j\epsilon$ and $\delta_{j}=\delta_{1}-O(jN^{-\epsilon})$. Choosing $\delta_{1}$ appropriately, after $O(1/\epsilon)$ steps we can extend the local laws to $\mc{D}(\delta,\tau)$ and hence prove $\mc{L}^{iso}_{z}(m,\delta,\tau)$ for $m=1,...,M$.

\section{Proof of Theorem \ref{thm1}}\label{sec4}
The proof is a standard application of Girko's formula and the Lindeberg method, so we only give a sketch. Girko's formula states that for $f\in C^{2}$ we have
\begin{align}
    \sum_{j}f(\lambda_{j})&=\frac{1}{2\pi}\int_{\mbb{C}}\Delta f(z)\log|\det(X-z)|\diff^{2}z.
\end{align}
We introduce the resolvent $G_{z}$ by
\begin{align}
    \log|\det(X-z)|&=\int_{0}^{N^{L}}\Im\tr G_{z}(i\eta)\diff\eta+O_{\prec}(N^{-D}),
\end{align}
for some $L>0$. For any $\xi>0$, by arguing as in \cite[Section 5.2]{he_edge_2023}, we can restrict the $\eta$-integral to the region $[N^{-1-\xi},T]$ using the least singular value bound in \cite[Theorem 2.5]{tao_random_2008} (applied to the matrix $Y=N^{1/2}X$) for the region $\eta<N^{-B}$ for some large $B$ and the bound in \cite[Theorem 2.2]{che_universality_2019} for the region $N^{-B}<\eta<N^{-1-\xi}$. Note that the latter theorem is stated for real matrices and $z=0$ but the proof generalises to complex matrices and any fixed $z\in\mbb{C}$ (see the Remark on pg 9 in \cite{che_universality_2019}). We deduce that comparing local statistics of eigenvalues of two random matrices $X$ and $Y$ amounts to comparing expectation values of $\Im\tr G_{z}(i\eta)$ for $\eta>N^{-1-\xi}$. This is done by replacing entries of $X$ by $Y$ one by one. Let the entries of $X^{(0)}$ be those of $X$ except at entry $(i,j)$ where it is zero, and denote the resolvent of its Hermitisation by $G^{(0)}_{z}$. Then we have
\begin{align}
    G&=\sum_{r=0}^{p-1}X_{ij}^{r}(G^{(0)}\mbf{e}_{i}\mbf{e}_{\hat{\jmath}}^{*})^{r}G^{(0)}+X_{ij}^{p}(G^{(0)}\mbf{e}_{i}\mbf{e}_{\hat{\jmath}}^{*})^{p}G.
\end{align}
Now fix an order on pairs of indices and let $X^{(\gamma)}$ denote the matrix whose first $\gamma$ entries are those of $X$ and the remaining $N^{2}-\gamma$ entries are those of $\sqrt{1-t}X+\sqrt{t}Y$, where $Y$ is an independent Ginibre matrix. We expand $G^{(\gamma)}$ and $G^{(\gamma-1)}$ in terms of $G^{(0)}$ and take the difference of the two expressions. Since the first two moments of the entries match and higher order moments differ by at most $tN^{-1}q^{-r+2}$, after expanding to a sufficiently high power $p$ (depending on $\epsilon=\frac{\log q}{\log N}$) we find
\begin{align}
    \left|\mbb{E}\int_{N^{-1-\xi}}^{N^{L}}\tr G^{(\gamma)}_{z}(i\eta)\diff\eta-\mbb{E}\int_{N^{-1-\xi}}^{N^{L}}\tr G^{(\gamma-1)}_{z}(i\eta)\diff\eta\right|&\lesssim\frac{N^{3\xi}\log(N)t}{Nq},
\end{align}
by the local law $|G_{\mathfrak{i},\mathfrak{j}}|\prec1\vee(N\eta)^{-1}$. Summing over all $N^{2}$ entries, the total difference is bounded by
\begin{align}
    \frac{N^{4\xi}Nt}{q}.
\end{align}
Choosing $\xi<\epsilon/8$ and $t=N^{-1+\epsilon/2}$, the above bound becomes $N^{-\epsilon/2}$. Thus the local statistics of $X$ and $\sqrt{1-t}X+\sqrt{t}Y$ are equal up to $O(N^{-\epsilon/2})$. By Theorem \ref{thm3}, $X\in\mc{M}_{N}(z,n_{\epsilon/4,k},\epsilon/4)$ and $\sigma_{z}=1+o(1)$ with very high probability, where $\sigma_{z}$ is defined in \eqref{eq:sigma}. By Theorem \ref{thm2} the local statistics of $\sqrt{1-t}X+\sqrt{t}Y$ are given by those of the Ginibre ensemble.

\paragraph{Acknowledgements}
This work was supported by the Royal Society, grant number \\RF/ERE210051.

\appendix
\section{Proof of Lemma \ref{lem:FBound}}\label{app:FBound}
We start with the bound on $K_{i}$. By \cite[Lemma 7.1]{maltsev_bulk_2024} we have the formula
\begin{align*}
    K_{i}(z_{i})&=e^{\frac{N}{t}(\eta^{(i-1)}_{z_{i}})^{2}}\det^{-1}\bigl((\eta^{(i-1)}_{z_{i}})^{2}+|X^{(i-1)}_{z_{i}}|^{2}\bigr)\\
    &\times\int_{-\infty}^{\infty}e^{\frac{iNp}{t}}\det^{-1}\bigl(1+ipH^{(i-1)}_{z_{i}}(i\eta^{(i-1)}_{z_{i}})\bigr)\diff p.
\end{align*}
By interlacing and the fact that $\eta^{(i)}_{z_{i}}\simeq t$, we have
\begin{align*}
    \Tr{(H^{(i-1)}_{z_{i}}(\eta^{(i-1)}_{z_{i}}))^{2}}&=\Tr{H^{2}_{z_{i}}(\eta^{(i-1)}_{z_{i}})}+O\left(\frac{1}{Nt^{4}}\right)\\
    &\gtrsim\frac{1}{t^{3}},
\end{align*}
and hence
\begin{align*}
    \int_{-\infty}^{\infty}e^{\frac{iNp}{t}}\det^{-1}\bigl(1+ipH^{(i-1)}_{z_{i}}(i\eta^{(i-1)}_{z_{i}})\bigr)\diff p&\lesssim\sqrt{\frac{t^{3}}{N}}.
\end{align*}

Now recall the definition of $F_{N-k}$:
\begin{align*}
    F_{N-k}(\mbf{z},X^{(k)})&=\mbb{E}_{N-k}\prod_{i=1}^{k}\Bigl|\det\Bigl(X^{(k)}_{z_{i}}+\sqrt{\frac{Nt}{N-k}}Y_{N-k}\Bigr)\Bigr|^{2}.
\end{align*}
By \cite[Lemma 6.1]{maltsev_bulk_2024} we have the formula
\begin{align*}
    F_{N-k}(\mbf{z},X^{(k)})&=\left(\frac{N}{\pi t}\right)^{k^{2}}\int_{\mbb{C}^{k\times k}}e^{-\frac{N}{t}\tr|Q|^{2}}D(Q)\diff Q,
\end{align*}
where
\begin{align*}
    D(Q)&:=\det\begin{pmatrix}-iQ\otimes1_{N-k}&1_{k}\otimes X^{(k)}-\mbf{z}\otimes1_{N-k}\\1_{k}\otimes X^{(k)*}-\mbf{z}^{*}\otimes1_{N-k}&-iQ^{*}\otimes1_{N-k}\end{pmatrix}.
\end{align*}
By Fischer's inequality we have
\begin{align*}
    |D(Q)|&\leq\prod_{i=1}^{k}\det^{1/2}\bigl((QQ^{*})_{ii}+|X^{(k)}_{z_{i}}|^{2}\bigr)\det^{1/2}\bigl((Q^{*}Q)_{ii}+|X^{(k)}_{z_{i}}|^{2}\bigr).
\end{align*}
Inserting this into the expression for $F_{N-k}$ and using Cauchy--Schwarz we obtain
\begin{align*}
    F_{N-k}(\mbf{z},X^{(k)})&\leq\left(\frac{N}{\pi t}\right)^{k^{2}}\int_{\mbb{C}^{k\times k}}e^{-\frac{N}{t}\tr|Q|^{2}}\prod_{i=1}^{k}\det\bigl((Q^{*}Q)_{ii}+|X^{(k)}_{z_{i}}|^{2}\bigr)\diff Q.
\end{align*}
Using spherical coordinates for the columns of $Q$ we have
\begin{align*}
    F_{N-k}(\mbf{z},X^{(k)})&\leq\left(\frac{N}{\pi t}\right)^{k^{2}}\left(\frac{\text{Vol}(S^{k-1})}{2}\right)^{k}\prod_{i=1}^{k}\int_{0}^{\infty}x^{k-1}e^{-N\phi^{(k)}_{i}(x)}\diff x,
\end{align*}
where
\begin{align*}
    \phi^{(j)}_{i}(x)&:=\frac{x}{t}-\Tr{\log(x+|X^{(j)}_{z_{i}}|^{2})}.
\end{align*}
By interlacing, we can deduce that
\begin{align*}
    \phi^{(j)}_{i}(x)-\phi^{(j)}_{i}\bigl((\eta^{(j-1)}_{z_{i}})^{2}\bigr)&\gtrsim\frac{\bigl(x-(\eta^{(j-1)}_{z_{i}})^{2}\bigr)^{2}}{t^{3}},
\end{align*}
uniformly in $x>0$, and hence by Laplace's method we obtain
\begin{align*}
    F_{N-k}(\mbf{z},X^{(k)})&\lesssim(Nt)^{k(k-1/2)}\prod_{i=1}^{k}e^{-\frac{N}{t}(\eta^{(i-1)}_{z_{i}})^{2}}\det\bigl((\eta^{(i-1)}_{z_{i}})^{2}+|X^{(i-1)}_{z_{i}}|^{2}\bigr),
\end{align*}
as desired.

\section{Proof of Proposition \ref{thm:singleLL}}\label{app:singleLL}
The proof is a simpler version of the arguments used to prove Theorem \ref{thm:multiLL} and so we only give a sketch. Our input will be the weak local law from \cite[Theorem 3.1]{he_edge_2023}:
\begin{align}
    \max_{\mathfrak{i},\mathfrak{j}\in[2N]}|\bigl(G_{z}(w)-M_{z}(w)\bigr)_{\mathfrak{i},\mathfrak{j}}|&\prec\frac{1}{(N\eta)^{1/6}}+\frac{1}{q^{1/3}},\label{eq:weakLL}
\end{align}
uniformly in 
\begin{align}
    \mbf{D}_{\delta}&:=\{(z,w)\in\mbb{C}^{2}:|z|\leq\delta^{-1},\,|\Re w|\leq\delta^{-2},\,N^{-1+\delta}\leq\eta\leq\delta^{-1}\}.
\end{align}
With the notation of Section \ref{sec:zig} we have
\begin{align*}
    \partial S^{av}_{t}(w_{1},E_{+})&=\frac{1}{2}S^{av}_{t}(w_{1},E_{+})+\Tr{G^{2}_{t}}S^{av}_{t}(w_{1},E_{+})\\
    &+\frac{1}{\sqrt{N}}\sum_{i,j=1}^{N}\partial_{ij}S^{av}_{t}(w_{1},E_{+})\diff B_{ij}\\
    &+\frac{1}{\sqrt{N}}\sum_{i,j=1}^{N}\bar{\partial}_{ij}S^{av}_{t}(w_{1},E_{+})\diff\bar{B}_{ij}.
\end{align*}
By \eqref{eq:weakLL} we have 
\begin{align*}
    |\Tr{G^{2}_{t}}|&\lesssim|m'_{z_{t}}(w_{t})|+\left(\frac{1}{(N\eta_{t})^{1/6}}+\frac{1}{q^{1/3}}\right)\frac{N^{\xi}}{\eta_{t}}\\
    &\lesssim\frac{1}{|1-|z_{t}||+\eta_{t}^{2/3}}+\left(\frac{1}{(N\eta_{t})^{1/6}}+\frac{1}{q^{1/3}}\right)\frac{N^{\xi}}{\eta_{t}},
\end{align*}
with very high probability. The quadratic variation of the martingale term is bounded by
\begin{align*}
    \frac{1}{N}\sum_{i,j}|\partial_{ij}\Tr{G_{t}}|^{2}&\leq\frac{1}{N^{2}}\Tr{|G_{t}|^{4}}\lesssim\frac{1}{N^{2}\eta^{3}_{t}},
\end{align*}
where the last inequality follows from \eqref{eq:weakLL}. By the BDG and Gr\"{o}nwall inequalities, we obtain
\begin{align*}
    |S^{av}_{t}(w_{1},E_{+})|&\lesssim|S^{av}_{0}(w_{1},E_{+})|+\frac{1}{N\eta_{t}},
\end{align*}
with very high probability. We can argue similarly for $S^{iso}_{t}(w_{1})$.

To remove the Gaussian component, we argue as in Section \ref{sec:zag}. For $S=S^{av}_{z}(w_{1},E_{+})$, we need to bound expressions of the form
\begin{align*}
    \frac{1}{Nq^{r+s-1}}\sum_{i,j=1}^{N}\partial_{ij}^{r+1}\bar{\partial}_{ij}^{s}|S|^{2p}.
\end{align*}
Each copy of $S$ on which a derivative acts gives a factor $(N\eta)^{-1}$: $N^{-1}$ from the normalised trace and $\eta^{-1}$ from the entry of $G^{2}$. The dominant contribution is therefore due to terms for which all derivatives act on the same copy. Among these, the largest contribution is from an even number of derivatives which result in diagonal entries, e.g.
\begin{align*}
    \frac{1}{N^{2}q^{r+s-1}}\sum_{i,j=1}^{N}|G_{i,i}|^{(r+s-1)/2}|G_{\hat{\jmath},\hat{\jmath}}|^{(r+s+1)/2}|(G^{2})_{i,i}||S^{2p-1}|&\lesssim\frac{1}{q\eta}\left(\frac{1}{q^{2p}}+\mbb{E}|S|^{2p}\right).
\end{align*}
For $S=S^{iso}_{z}(w_{1})$, we need the additional input
\begin{align*}
    \sum_{j}G_{ij}&\prec\frac{1}{\eta},
\end{align*}
from Lemma \ref{lem:fluctuationAveraging}. Note that the proof of this bound relies only on upper bounds for $\hat{m}_{z}(w)$ and $\hat{u}_{z}(w)$ (recall \eqref{eq:Mhat}) which follow from the weak local law in \eqref{eq:weakLL}. With this input we can proceed along the same lines as the proof of Lemma \ref{lem:isoGFT}.

\section{Proof of Lemma \ref{lem:deterministicBounds}}\label{app:deterministicBounds}

We recall the notation of Section 3.2: $M_{z}(w_{1},B_{1},...,w_{m})$ denotes the deterministic approximation to $G_{z}(w_{1})B_{1}\cdots G_{z}(w_{m})$ and $M_{t}(w_{1},B_{1},...,w_{m})$ the deterministic approximation to $G_{z_{t}}(w_{1,t})B_{1}\cdots G_{z_{t}}(w_{m,t})$, where $(w_{j,t},z_{t})$ is the solution of \eqref{eq:dw} with initial condition $(w_{j},z)$. We will prove the bounds dynamically using Gr\"{o}nwall's inequality. Before doing so we consider some cases that can be handled directly.

The case $m=1$ is immediate from the cubic equation for $m_{z}(w)$ in \eqref{eq:cubic}, which implies that $|m_{z}(w)|\lesssim1$ and $|u_{z}(w)|\lesssim1$. For $m=2$ and $a=1$ we have
\begin{align}
    |\Tr{M_{z}(w_{1},F,w_{2})}|&=|z|\left|\frac{u_{1}-u_{2}}{w_{1}-w_{2}}\right|\lesssim1,\label{eq:(2,1)}
\end{align}
by \eqref{eq:m}. In general, if $m>a$ we can remove each factor of $B_{j}=E_{\pm}$ by contour integration and the resolvent identity at the cost of a factor $\eta^{-1}$. If $a=1$ we reduce to $m=a+1$ and use \eqref{eq:(2,1)}, otherwise we reduce to $m=a$. Moreover, the case $m=a\leq4$ of \eqref{eq:deterministicAverageBound} follows from \cite[Lemma 4.3]{cipolloni_optimal_2024}. Thus we are left with proving \eqref{eq:deterministicAverageBound} for $m=a>4$ which we do by induction.

Assume that
\begin{align}
    |\Tr{M_{z}(w_{1},B_{1},...,w_{l})B_{l}}|&\lesssim\frac{1}{\eta^{\lfloor l/2\rfloor-1}},\label{eq:induction1}
\end{align}
uniformly in $\eta>0$ and $B_{j}\in\{F,F^{*}\}$ for $l=1,...,m-1$ and $m>4$, where $\eta=\min_{j}|\Im w_{j}|$. Let $B_{m}\in\{F,F^{*}\}$ and consider $\Tr{M_{t}(w_{1},B_{1},...,w_{m})B_{m}}$. By the argument in the proof of \cite[Lemma 4.8]{cipolloni_eigenstate_2023}, we can derive the differential equation
\begin{align}
    \partial_{t}\Tr{M_{t}(w_{1},B_{1},...,w_{m})B_{m}}&=\frac{m}{2}\Tr{M_{t}(w_{1},B_{1},...,w_{m})B_{m}}+\sum_{p<r}^{m}A_{p,r}(t),\label{eq:ODE}
\end{align}
where
\begin{align}
    A_{p,r}(t)&=\sum_{\nu=\pm}\nu\Tr{M_{t}(w_{p},B_{p},...,w_{r})E_{\nu}}\Tr{M_{t}(w_{1},B_{1},...,w_{p},E_{\nu},w_{r},B_{r},...,w_{m})B_{m}}.
\end{align}
In each trace on the right hand side a matrix $B_{j}\in\{F,F^{*}\}$ has been replaced by $E_{\nu}$. Thus we can use the induction hypothesis and the argument below \eqref{eq:(2,1)} to obtain
\begin{align*}
    |A_{p,r}(t)|&\lesssim\frac{1}{\eta^{\lfloor m/2\rfloor}_{t}}.
\end{align*}
Since $\lfloor m/2\rfloor\geq2$ for $m>4$, by \eqref{eq:integralBound} we have
\begin{align*}
    \int_{0}^{t}|A_{p,r}(s)|\diff s&\lesssim\frac{1}{\eta^{\lfloor m/2\rfloor-1}_{t}}.
\end{align*}
By Gr\"{o}nwall's inequality we obtain
\begin{align*}
    |\Tr{M_{t}(w_{1},B_{1},...,w_{m})B_{m}}|&\lesssim\frac{1}{\eta^{\lfloor m/2\rfloor-1}_{t}}+|\Tr{M_{z}(w_{1},B_{1},...,w_{m})B_{m}}|.
\end{align*}
Choosing the initial condition such that $|\Im w_{j}|\geq1$, we obtain \eqref{eq:deterministicAverageBound}.

The norm bound in \eqref{eq:deterministicNormBound} follows from the fact that the deterministic approximation is a multiple of $1_{N}$ in each block (since all $B_{i}$ are of this form) and so
\begin{align*}
    \|M_{z}(w_{1},B_{1},...,w_{m+1})\|&\leq2\max_{B_{m+1}\in\mbb{H}}|\Tr{M_{z}(w_{1},B_{1},...,w_{m})B_{m+1}}|.
\end{align*}

\section{Proof of Lemma \ref{lem:entry-wiseStoppingTime}}\label{app:entry-wiseStoppingTime}
The proof proceeds almost exactly as the proof of Lemma \ref{lem:averagedStoppingTime} after replacing $B_{m}$ with $N\mbf{e}_{\mathfrak{j}}\mbf{e}_{\mathfrak{i}}^{*}$. As in that proof, we can restrict to the case in which $\pm \Im w_{p}\Im w_{p+1}>0$ when $B_{p}=E_{\pm}$ for $p=1,...,m$. We have the SDE
\begin{align}
    \diff S^{iso}_{t}(w_{1},B_{1},...,w_{m+1})&=\frac{m+1}{2}S^{iso}_{t}(w_{1},B_{1},...,w_{m+1})\diff t+\sum_{p\leq r}^{m+1}A_{p,r}(t)\diff t\nonumber\\
    &+\frac{1}{\sqrt{N}}\sum_{i,j=1}^{N}\partial_{ij}\bigl(S^{av}_{t}(w_{1},B_{1},...,w_{m+1})\bigr)\diff B_{ij}\\
    &+\frac{1}{\sqrt{N}}\sum_{i,j=1}^{N}\bar{\partial}_{ij}\bigl(S^{av}_{t}(w_{1},B_{1},...,w_{m+1})\bigr)\diff\bar{B}_{ij},\nonumber
\end{align}
where
\begin{align}
    A_{p,p}(t)&=\Tr{G_{p,t}-M_{p,t}}\bigl(G_{1,t}B_{1}\cdots G^{2}_{p,t}B_{p}\cdots G_{m+1,t}\bigr)_{\mathfrak{i},\mathfrak{j}},
\end{align}
and
\begin{align}
    A_{p,r}(t)&=\sum_{\nu=\pm}\nu\Big[\bigl(M_{t}(w_{1},B_{1},...,w_{p},E_{\nu},w_{r},...,w_{m+1})\bigr)_{\mathfrak{i},\mathfrak{j}}S^{av}_{t}(w_{p},B_{p},...,w_{r},E_{\nu})\nonumber\\
    &+\Tr{M_{t}(w_{p},B_{p},...,w_{r})E_{\nu}}S^{iso}_{t}(w_{1},B_{1},...,w_{p},E_{\nu},w_{r},B_{r},...,w_{m+1})\nonumber\\
    &+S^{av}_{t}(w_{p},B_{p},...,w_{r},E_{\nu})S^{iso}_{t}(w_{1},B_{1},...,w_{p},E_{\nu},w_{r},B_{r},...,w_{m+1})\Big]
\end{align}

Consider $A_{p,p}$. We use the single resolvent local law for the first factor. If $m\leq M-1$, then we can directly estimate the second factor using the definiton of $\tau(\mbf{w}_{0})$ to obtain
\begin{align*}
    |A_{p,p}(t)|&\lesssim \frac{N^{\xi/2}}{N\eta_{t}}\cdot\frac{1}{\eta^{m-a/2+1}_{t}},
\end{align*}
with very high probability. If $m=M$, then by Cauchy--Schwarz we have
\begin{align*}
    |\bigl(G_{1,t}B_{1}\cdots G^{2}_{p,t}B_{p}\cdots G_{m+1,t}\bigr)_{\mathfrak{i},\mathfrak{j}}|&\leq\frac{1}{\eta_{t}}\bigl(G_{1,t}B_{1}G_{2,t}\cdots B_{p-1}\Im(G_{p,t})B_{p-1}^{*}\cdots G_{2,t}^{*}B_{1}^{*}G_{1,t}^{*}\bigr)_{\mathfrak{i},\mathfrak{i}}^{1/2}\\
    &\times\bigl(G_{m+1,t}^{*}B_{m}^{*}G_{m,t}^{*}\cdots B_{p}^{*}\Im(G_{p,t})B_{p}\cdots G_{m,t}B_{m}G_{m+1,t}\bigr)_{\mathfrak{j},\mathfrak{j}}^{1/2}.
\end{align*}
The two terms on the right hand side have $2(p-1)$ and $2(m-p+1)$ test matrices respectively. Assume that $p\leq m/2$ (if $p>m/2$ we interchange the roles of the two terms). Then the first term can be bounded using the definition of $\tau(\mbf{w}_{0})$. For the second term, we use a spectral decomposition (similar to that in the proof of Lemma \ref{lem:averagedStoppingTime}) to obtain
\begin{align*}
    \bigl(G_{m+1,t}^{*}B_{m}^{*}G_{m,t}^{*}\cdots B_{p}^{*}\Im(G_{p,t})B_{p}\cdots G_{m,t}B_{m}G_{m+1,t}\bigr)_{\mathfrak{j},\mathfrak{j}}^{1/2}&\prec \frac{N^{1/2+(m+2+a)\xi/2}}{\eta^{m-p-(a-a_{1,p})/2+1/2}_{t}},
\end{align*}
where $a_{p,r}$ is the number of $B_{j}\in\{F,F^{*}\}$ for $j=p,...,r-1$, which implies that
\begin{align*}
    |A_{p,p}(t)|&\prec\frac{N^{(m+2+a)\xi/2}}{\sqrt{N\eta_{t}}\eta^{m-a/2+1}_{t}}.
\end{align*}
Integrating over $t$ we find
\begin{align*}
    \int_{0}^{t}|A_{p,p}(s)|\diff s&\lesssim\frac{N^{(m+2+a)\xi/2}\mc{E}^{iso}_{\eta_{t},q}}{\eta^{m-a/2}_{t}}\lesssim N^{(m+1+a)\xi}\Psi^{iso}(\eta_{t},m,a).
\end{align*}

Now consider $A_{p,r}$ with $p<r$. We can treat the terms with $r-p\in[2,3,...,m-1]$ as in Lemma \ref{lem:averagedStoppingTime}, since these have lower order $S^{av/iso}_{t}$, and likewise the terms with $r=p+1$ and $a_{p,p+1}=1$ (i.e. $B_{p}\in\{F,F^{*}\}$). When $r=p+1<m+1$ and $a_{p,r}=0$ (i.e. $B_{p}=E_{\pm}$) we have
\begin{align*}
    A_{p,p+1}&=\pm\Tr{M_{t}(w_{p},B_{p},w_{p+1})B_{p}}S^{iso}_{t}(w_{1},B_{1},...,w_{m+1})+O\left(N^{(m+a)\xi}\Psi^{iso}(\eta,m,a)\right),
\end{align*}
where the $\pm$ sign is chosen according to $B_{p}=E_{\pm}$. When $p=1$ and $r=m+1$ we have
\begin{align*}
    A_{1,m+1}&=\sum_{\nu=\pm}\nu\bigl(M_{t}(w_{1},E_{\nu},w_{m+1})\bigr)_{\mathfrak{i},\mathfrak{j}}S^{av}_{t}(w_{1},B_{1},...,w_{m+1},E_{\nu})+O\left(N^{(m+a)\xi}\Psi^{iso}(\eta,m,a)\right).
\end{align*}
If $m<M$ we can bound this using the definition of $\tau(\mbf{w}_{0})$:
\begin{align*}
    |S^{av}_{t}(w_{1},B_{1},...,w_{m+1},E_{\nu})|&\leq \frac{N^{(m+1+a)\xi}\Psi^{av}(\eta_{t},m,a)}{\eta_{t}}\\
    &\leq N^{(m+1+a)\xi}\Psi^{iso}(\eta_{t},m,a).
\end{align*}
Otherwise, we use Cauchy--Schwarz and the integral representation of $|G|$ as in
\begin{align*}
    |\Tr{G(GB)^{m}}|&=|\Tr{(GB)^{m/2}G^{1/2}\cdot G^{1/2}(BG)^{m/2}}|\\
    &\leq\frac{1}{\eta}\Tr{\Im(G)B(GB)^{m/2-1}|G|B^{*}(G^{*}B^{*})^{m/2-1}}\\
    &\lesssim\frac{1}{\eta^{m-a/2}}.
\end{align*}
In either case we obtain
\begin{align*}
    |S^{av}_{t}(w_{1},B_{1},...,w_{m+1},E_{\nu})|&\lesssim\frac{N^{(m+1+a)\xi}\Psi^{iso}(\eta_{t},m,a)}{\eta^{m-a/2}_{t}}.
\end{align*}
Since $|\Tr{M_{t}(w_{1},E_{\nu},w_{m+1})}|\lesssim\eta_{t}^{-1}$, after integrating over $t$ using \eqref{eq:integralBound} we obtain
\begin{align*}
    \int_{0}^{t}|A_{p,r}(s)|\diff s&\lesssim N^{(m+1+a)\xi}\Psi(\eta_{t},m,a).
\end{align*}

Finally, we have the stochastic term whose quadratic variation is bounded by
\begin{align*}
    &\frac{1}{N\eta_{p,t}^{2}}\bigl(G_{1,t}B_{1}\cdots G_{p-1,t}B_{p-1}\Im(G_{p,t})B_{p-1}^{*}G_{p-1,t}^{*}\cdots B_{1}^{*}G_{1,t}^{*}\bigr)_{\mathfrak{i},\mathfrak{i}}\\
    &\times\bigl(G_{m+1,t}^{*}B_{m}^{*}\cdots G_{p+1,t}^{*}B_{p}^{*}\Im(G_{p,t})B_{p}G_{p+1,t}\cdots B_{m}G_{m+1,t}\bigr)_{\mathfrak{j},\mathfrak{j}}.
\end{align*}
The two terms on the right hand side contain $2(p-1)$ and $2(m-p+1)$ test matrices respectively. If $m\leq M/2$ then these can both be bounded by the definition of $\tau(\mbf{w}_{0})$. If $m>M/2$, we use a spectral decomposition for the term with the greater number of test matrices, as in the bound for $A_{p,p}$. Ultimately we find that the quadratic variation is bounded by
\begin{align*}
    \frac{1+1_{\{m>M/2\}}\cdot N^{1+(m+2+a)\xi}\eta_{t}}{N\eta^{2m-a+2}_{t}}.
\end{align*}
By the BDG inequality we find
\begin{align*}
    \left|\frac{1}{\sqrt{N}}\sum_{i,j=1}^{N}\int_{0}^{t}\partial_{ij}S^{iso}_{s}(w_{1},B_{1},...,w_{m+1})\diff s\right|&\lesssim\frac{N^{\xi}(\mc{E}^{iso}_{\eta_{t},q}+1_{\{m>M/2\}}\cdot N^{(m+2+a)\xi/2})}{\eta^{m-a/2}_{t}}\\
    &\lesssim N^{(m+1+a)\xi}\Psi^{iso}(\eta_{t},m,a),
\end{align*}
with very high probability.

Combining all these bounds we have
\begin{align*}
    \diff S^{iso}_{t}(w_{1},B_{1},...,w_{m+1})&=\phi(t)S^{iso}_{t}(w_{1},B_{1},...,w_{m+1})\diff t+R(t)\diff t,
\end{align*}
with
\begin{align*}
    \phi(t)&=\left(\frac{m+1}{2}+\sum_{p:B_{p}=E_{\pm}}\pm\Tr{M_{t}(w_{p},B_{p},w_{p+1})B_{p}}\right).
\end{align*}
Since
\begin{align*}
    \int_{0}^{t}|R(s)|\diff s&\lesssim N^{(m+1+a)\xi}\Psi^{iso}(\eta_{t},m,a),
\end{align*}
with very high probability, and
\begin{align*}
    \int_{0}^{t}|\phi(s)|\diff s&\lesssim1
\end{align*}
since $\pm \Im w_{p}\Im w_{p+1}>0$ when $B_{p}=E_{\pm}$, we conclude the proof by Gr\"{o}nwall's inequality.

\section{Proof of Lemma \ref{lem:avGFT}}\label{app:avGFT}
Recall the notation of Lemma \ref{lem:avGFT}:
\begin{align*}
    S&:=S^{av}_{t}(w_{1},B_{1},...,w_{m},B_{m})=\Tr{G_{1}B_{1}\cdots G_{m}B_{m}}-\Tr{M_{z}(w_{1},B_{1},...,w_{m})B_{m}},
\end{align*}
where $G_{j}\equiv G_{z}(w_{j})$ is the resolvent of the Hermitisation of $X(t)$ and $X(t)$ is the solution of the matrix Ornstein--Uhlenbeck process \eqref{eq:matrixOU}.

By It\^{o}'s lemma and the cumulant expansion, we have
\begin{align*}
    \frac{\diff}{\diff t}\mbb{E}|S|^{2p}&=\sum_{r+s=2}^{L}\frac{\kappa_{r+1,s}}{Nq^{r+s-1}}\sum_{i_{1},i_{2}=1}^{N}\mbb{E}(\partial^{r+1}_{i_{1},i_{2}}\bar{\partial}^{s}_{i_{1},i_{2}}+\partial^{r}_{i_{1},i_{2}}\bar{\partial}^{s+1}_{i_{1},i_{2}})|S|^{2p}+O(N^{-D}).
\end{align*}
As in the proof of Lemma \ref{lem:isoGFT}, we ignore the distinction between $S$ and $\bar{S}$ and assume that $\kappa_{r+1,s}=\kappa_{r,s+1}$.

The action of $\partial_{i_{1},i_{2}}$ on $S$ is given by
\begin{align*}
    \partial_{i_{1},i_{2}}\Tr{(GF)^{m}}&=-\frac{m}{N}\bigl((GF)^{m}G\bigr)_{\hat{\imath}_{2},i_{1}}.
\end{align*}
Thus, each copy of $S$ on which a derivative acts generates a factor of $N^{-1}$. Moreover, any term generated by an odd number of derivatives necessarily contains at least one off-diagonal entry, which contributes a factor of $1_{\{m>M/2\}}+\mc{E}^{iso}_{\eta,q}$. Let $d$ be the number of copies of $S$ on which a derivative acts. In the notation of Definition \ref{def:polynomials}, a general such term has the form
\begin{align*}
    \frac{1}{N^{d+1}q^{r+s-1}}\sum_{i_{1},i_{2}}P_{x}(\mbf{n},\mathfrak{i},\mathfrak{j})Q_{y}(\mathfrak{k},\mathfrak{l})S^{2p-d},
\end{align*}
where $y\leq d$ and $|\mbf{n}|=m(d-y)$. We bound all terms in $P_{x}$ and $Q_{y}$ using the assumption $\mc{L}^{iso}_{z}(l,\delta,\tau)$ for $l=1,...,m$:
\begin{align*}
    |P_{x}(\mbf{n},\mathfrak{i},\mathfrak{j})Q_{y}(\mathfrak{k},\mathfrak{l})|&\prec\frac{1}{\eta^{dm/2}}.
\end{align*}
From this we obtain
\begin{align*}
    \frac{1}{N^{d+1}q^{r+s-1}}\sum_{i_{1},i_{2}}P_{x}(\mbf{n},\mathfrak{i},\mathfrak{j})Q_{y}(\mathfrak{k},\mathfrak{l})S^{2p-d}&\prec\frac{N^{2}}{N^{d+1}q^{r+s-1}\eta^{dm/2}}\cdot|S|^{2p-d}\\
    &\prec\frac{1}{q\eta}\cdot\frac{1}{(N\eta)^{d-1}q^{r+s-2}\eta^{d(m/2-1)}}\cdot|S|^{2p-d}\\
    &\prec\frac{1}{q\eta}\left(\frac{\mc{E}^{av}_{\eta,q}}{\eta^{m/2-1}}\right)^{d}|S|^{2p-d}\\
    &\prec\frac{1}{q\eta}\left(\left(\frac{\mc{E}^{av}_{\eta,q}}{\eta^{m/2-1}}\right)^{2p}+|S|^{2p}\right),
\end{align*}
if $d>1$ or $r+s>2$.

Now consider the case $r+s=2$ and $d=1$, i.e. the terms generated by three derivatives acting on a single copy of $S$. If there is more than one off-diagonal entries of $(GF)^{n}G$, then at least one of these has $n\leq m/2$ and we can apply Cauchy--Schwarz to gain a factor of $(N\eta)^{-1/2}$. Thus we need only consider those terms which contain exactly one off-diagonal entry. For concreteness let us consider the term
\begin{align*}
    \frac{1}{N^{2}q}\sum_{i_{1},i_{2}}\bigl((GF)^{m}G\bigr)_{\hat{\imath}_{2},i_{1}}G_{i_{1},i_{1}}G_{\hat{\imath}_{2},\hat{\imath}_{2}}S^{2p-1},
\end{align*}
generated by all three derivatives acting on a single copy of $S$. We replace $G_{i_{1},i_{1}}$ with a quantity independent of $i_{1}$ using \eqref{eq:self1} and then summing over $i_{1}$ using the fluctuation averaging bound
\begin{align*}
    \sum_{i_{1}}\bigl((GF)^{m}G\bigr)_{\hat{\imath}_{2},i_{1}}&\prec\frac{1}{\eta^{m/2+1}}
\end{align*}
from Lemma \ref{lem:fluctuationAveraging}. This procedure is the same as that in the proof of Lemma \ref{lem:isoGFT} and so we omit the details. The end result is the bound
\begin{align*}
    \frac{1}{Nq\eta^{m/2+1}}|S|^{2p-1}&\prec\frac{1}{q\eta}\left(\left(\frac{\mc{E}^{av}_{\eta,q}}{\eta^{m/2-1}}\right)^{2p}+|S|^{2p}\right),
\end{align*}
which concludes the proof.


\begin{thebibliography}{10}
\bibitem{bao_law_2025}
Zhigang Bao, Giorgio Cipolloni, L\'{a}szl\'{o} Erd\H{o}s, Joscha Henheik and Oleksii Kolupaiev, Law of Fractional Logarithm for Random Matrices, arXiv:2503.18922

\bibitem{che_universality_2019}
Ziliang Che and Patrick Lopatto, Universality of the Least Singular Values for Sparse Random Matrices, Electron. J. Probab. 24, 1-53, 2019

\bibitem{cipolloni_eigenstate_2023}
Giorgio Cipolloni, L\'{a}szl\'{o} Erd\H{o}s and Joscha Henheik, Eigenstate Thermalisation at the Edge for Wigner Matrices, arXiv:2309.05488

\bibitem{cipolloni_eigenvector_2025}
Giorgio Cipolloni, L\'{a}szl\'{o} Erd\H{o}s, Joscha Henheik and Oleksii Kolupaiev, Eigenvector Decorrelation for Random Matrices, arXiv:2410.10718

\bibitem{cipolloni_optimal_2024}
Giorgio Cipolloni, L\'{a}szl\'{o} Erd\H{o}s, Joscha Henheik and Dominik Schr\"{o}der, Optimal Lower Bound on Eigenvector Overlaps for non-Hermitian Random Matrices, Journal of Functional Analysis, 287 (4), 2024

\bibitem{cipolloni_edge_2020}
Giorgio Cipolloni, L\'{a}szl\'{o} Erd\H{o}s and Dominik Schr\"{o}der, Edge Universality for non-Hermitian Random Matrices, Probab. Theory Relat. Fields 179, 1-28, 2020

\bibitem{cipolloni_eigenstate_2021}
Giorgio Cipolloni, L\'{a}szl\'{o} Erd\H{o}s and Dominik Schr\"{o}der, Eigenstate Thermalization Hypothesis for Wigner Matrices, Commun. Math. Phys. 388, 1005-1048, 2021

\bibitem{cipolloni_optimal_2022}
Giorgio Cipolloni, L\'{a}szl\'{o} Erd\H{o}s and Dominik Schr\"{o}der, Optimal Multi-Resolvent Local Laws for Wigner Matrices, Electron. J. Probab. 27, 1-38, 2022

\bibitem{cipolloni_mesoscopic_2024}
Giorgio Cipolloni, L\'{a}szl\'{o} Erd\H{o}s and Dominik Schr\"{o}der, Mesoscopic Central Limit Theorem for non-Hermitian Random Matrices, Probab. Theory Relat. Fields, 188 (3), 2024

\bibitem{cipolloni_universality_2024}
Giorgio Cipolloni, L\'{a}szl\'{o} Erd\H{o}s and Yuanyuan Xu, Universality of Extremal Eigenvalues of Large Random Matrices, arXiv:2312.08325

\bibitem{dubova_bulk_2024}
Sophiia Dubova and Kevin Yang, Bulk Universality for Complex Eigenvalues of Real non-Symmetric Random Matrices with IID Entries, arXiv:2402.10197

\bibitem{dubova_gaussian_2024}
Sophiia Dubova, Kevin Yang, Horng-Tzer Yau and Jun Yin, Gaussian statistics for left and right eigenvectors of complex non-Hermitian matrices, arXiv:2403.19644

\bibitem{erdos_spectral_2012}
L\'{a}szl\'{o} Erd\H{o}s, Antti Knowles, Horng-Tzer Yau and Jun Yin, Spectral Statistics of Erd\H{o}s--R\'{e}nyi Graphs II: Eigenvalue Spacing and the Extreme Eigenvalues, Commun. Math. Phys. 314, 587-640, 2012

\bibitem{erdos_local_2013}
L\'{a}szl\'{o} Erd\H{o}s, Antti Knowles, Horng-Tzer Yau and Jun Yin, The local semicircle law for a general class of random matrices, Electron. J. Probab. 18, 1-58, 2013

\bibitem{erdos_zigzag_2025}
L\'{a}szl\'{o} Erd\H{o}s and Volodymyr Riabov, The Zigzag Strategy for Random Band Matrices, arXiv:2506.06441

\bibitem{gotze_circular_2010}
Friedrich G\"{o}tze and Alexander Tikhomirov, The Circular Law for Random Matrices, Annals of Probability 38 (4), 1444-1491, 2010

\bibitem{he_edge_2023}
Yukun He, Edge Universality of Sparse Erd\H{o}s--R\'{e}nyi Digraphs, arXiv:2304.04723

\bibitem{he_mesoscopic_2017}
Yukun He and Antti Knowles, Mesoscopic Eigenvalue Statistics of Wigner Matrices, Annals of Applied Probability 27 (3), 2017

\bibitem{he_fluctuations_2021}
Yukun He and Antti Knowles, Fluctuations of extreme eigenvalues of sparse Erd\H{o}s--R\'{e}nyi graphs, Probab. Theory Relat. Fields 180: 985-1056, 2021

\bibitem{huang_transition_2020}
Jiaoyang Huang, Benjamin Landon and Horng-Tzer Yau, Transition from Tracy--Widom to Gaussian fluctuations of extremal eigenvalues of sparse Erd\H{o}s--R\'{e}nyi graphs, Annals of Probability 48 (2), 916-962, 202

\bibitem{landon_fixed_2019}
Benjamin Landon, Philippe Sosoe and Horng-Tzer Yau, Fixed Energy Universality of Dyson Brownian Motion, Advances in Mathematics 346, 1137-1332, 2019

\bibitem{lee_higher_2024}
Jaehun Lee, Higher Order Fluctuations of Extremal Eigenvalues of Sparse Random Matrices, Ann. Inst. Henri Poincar\'{e} Probab. Stat. 60, 2694-2735, 2024

\bibitem{lee_local_2018}
Ji Oon Lee and Kevin Schnelli, Local Law and Tracy--Widom Limit for Sparse Random Matrices, Probab. Theory Relat. Fields, 171 (1), 2018

\bibitem{maltsev_bulk_2024}
Anna Maltsev and Mohammed Osman, Bulk Universality for Complex non-Hermitian Matrices with Independent and Identically Distributed Entries, Probab. Theory Relat. Fields, 2024

\bibitem{osman_least_2024}
Mohammed Osman, Least non-zero singular value and the distribution of eigenvectors of non-Hermitian random matrices, arXiv:2404.01149

\bibitem{osman_universality_2024}
Mohammed Osman, Universality for diagonal eigenvector overlaps of non-Hermitian random matrices, arXiv:2409.16144

\bibitem{osman_bulk_2025}
Mohammed Osman, Bulk Universality for Real Matrices with Independent and Identically Distributed Entries, Electron. J. Probab. 30, 1-66, 2025

\bibitem{rudelson_sparse_2019}
Mark Rudelson and Konstantin Tikhomirov, The Sparse Circular Law Under Minimal Assumptions, Geom. Funct. Anal. 29, 561-637, 2019

\bibitem{sah_limiting_2023}
Ashwin Sah, Julian Sahasrabudhe and Mehtaab Sawhney, The Limiting Spectral Law for Sparse IID Matrices, arXiv:2310.17635

\bibitem{schnelli_convergence_2022}
Kevin Schnelli and Yuanyuan Xu, Convergence Rate to Tracy--Widom Laws for the Largest Eigenvalue of Wigner Matrices, Commun. Math. Phys. 393 (2), 839-907, 2022

\bibitem{tao_random_2008}
Terence Tao and Van Vu, Random Matrices: the Circular Law, Commun. Contemp. Math. 10 (2), 261-307, 2008.

\end{thebibliography}
\end{document}